\let\emph\undefined
\newcommand{\emph}[1]{{\slshape #1}}
\newcommand{\spaceplease}{\needspace{5\baselineskip}}
\newtheoremstyle{mytheorem}% name of the style to be used
{\topsep}% measure of space to leave above the theorem. E.g.: 3pt
{\topsep}% measure of space to leave below the theorem. E.g.: 3pt
{\slshape}% name of font to use in the body of the theorem
{0pt}% measure of space to indent
{\bfseries}% name of head font
{.}% punctuation between head and body
{ }% space after theorem head; " " = normal interword space
{\thmname{#1}\thmnumber{ #2}\thmnote{ {\normalfont\slshape(#3)}}}
 \newtheoremstyle{mydefinition}% name of the style to be used
 {\topsep}% measure of space to leave above the theorem. E.g.: 3pt
 {\topsep}% measure of space to leave below the theorem. E.g.: 3pt
 {\normalfont}% name of font to use in the body of the theorem
 {0pt}% measure of space to indent
 {\bfseries}% name of head font
 {.}% punctuation between head and body
 { }% space after theorem head; " " = normal interword space
 {\thmname{#1}\thmnumber{ #2}\thmnote{ {\normalfont\slshape(#3)}}}
\theoremstyle{mytheorem}
\newtheorem{theorem}{Theorem}[section]
\newtheorem*{rep@theorem}{\rep@title}
\newcommand{\newreptheorem}[2]{%
	\newenvironment{rep#1}[1]{%
		\def\rep@title{#2 \ref{##1}}%
		\begin{rep@theorem}}%
		{\end{rep@theorem}}}
\newtheorem{lemma}[theorem]{Lemma}
\newtheorem{proposition}[theorem]{Proposition}
\newtheorem{corollary}[theorem]{Corollary}
\theoremstyle{mydefinition}
\newtheorem{definition}[theorem]{Definition}
\newenvironment{example}
{\pushQED{\qed}\exx}
{\popQED\endexx}
\newenvironment{remark}
{\pushQED{\qed}\remm}
{\popQED\endremm}
\numberwithin{equation}{section}
\newenvironment{xenumerate}{\begin{enumerate}[topsep=2pt,parsep=2pt,partopsep=2pt,itemsep=0pt,label={\normalfont(\arabic*)}]\itemsep0pt}{\end{enumerate}}
\DeclareMathSymbol{\Phiit}{\mathalpha}{letters}{"08}\let\Phi\undefined\newcommand{\Phi}{\Phiit}
\DeclareMathSymbol{\Psiit}{\mathalpha}{letters}{"09}\let\Psi\undefined\newcommand{\Psi}{\Psiit}
\DeclareMathSymbol{\Sigmait}{\mathalpha}{letters}{"06}\let\Sigma\undefined\newcommand{\Sigma}{\Sigmait}
\DeclareMathSymbol{\Xiit}{\mathalpha}{letters}{"04}
\DeclareMathSymbol{\Lambdait}{\mathalpha}{letters}{"03}\let\Lambda\undefined\newcommand{\Lambda}{\Lambdait}
\DeclareMathSymbol{\Piit}{\mathalpha}{letters}{"05}\let\Pi\undefined\newcommand{\Pi}{\Piit}
\DeclareMathSymbol{\Gammait}{\mathalpha}{letters}{"00}\let\Gamma\undefined\newcommand{\Gamma}{\Gammait}
\DeclareMathSymbol{\Omegait}{\mathalpha}{letters}{"0A}\let\Omega\undefined\newcommand{\Omega}{\Omegait}
\DeclareMathSymbol{\Upsilonit}{\mathalpha}{letters}{"07}\let\Upsilon\undefined\newcommand{\Upsilon}{\Upilonit}
\DeclareMathSymbol{\Thetait}{\mathalpha}{letters}{"02}\let\Theta\undefined\newcommand{\Theta}{\Thetait}
\let\e\undefined\newcommand{\e}{^\mathrm{e}}
\def\Hom{\mathrm{Hom}}
\def\id{\mathrm{id}}
\def\SL{\operatorname{SL}}
\let\to\undefined\newcommand{\to}{\longrightarrow}
\let\mapsto\undefined\newcommand{\mapsto}{\longmapsto}
\newcommand{\catf}[1]{\mathsf{#1}}
\newcommand{\Proj}{\operatorname{\catf{Proj}}}
\newcommand{\Mod}{\catf{Mod}}
\newcommand{\Loop}{\mathcal{L}}
\def\op{\mathrm{op}}
\def\Ch{\catf{Ch}_k}
\def\ChA{\mathsf{Ch}_R}
\newcommand{\hocolimsub}[1]{\underset{#1}{\operatorname{hocolim}}\,}
\def\PBun{\catf{PBun}}
\def\nE{\bar E}
\let\P\undefined
\newcommand{\P}{\operatorname{P}}
\newcommand{\Vect}{\catf{Vect}}
\newcommand{\cof}{\mathsf{Q}}
\newcommand{\cat}[1]{\mathcal{#1}}
\newcommand{\sign}{\operatorname{sign}  }
\newcommand{\Cat}{\catf{Cat}}
\newcommand{\Alg}{\catf{Alg}}
\newcommand{\PBr}{\mathsf{PBr}}
\let\O\undefined
\newcommand{\O}{\mathcal{O}}
\let\Bar\undefined
\newcommand{\Bar}{\operatorname{Bar}}
\newcommand{\lint}{\int_\mathbb{L}}
\newcommand{\fint}{\int_\text{\normalfont f}}
\newcommand{\flint}{\int_{\text{\normalfont f}\mathbb{L}}}
\newcommand{\sVect}{\catf{sVect}}
\newcommand{\Aut}{\operatorname{Aut}}
\definecolor{Blue}  {rgb} {0.282352,0.239215,0.803921}
\definecolor{Green} {rgb} {0.133333,0.545098,0.133333}
\definecolor{Red}   {rgb} {0.803921,0.000000,0.000000}
\definecolor{Violet}{rgb} {0.580392,0.000000,0.827450}
\newcounter{jfc}
\begin{document}

	\begin{flushright}
		\small
		{\sf [ZMP-HH/19-17]} \\
		\textsf{Hamburger Beiträge zur Mathematik Nr.~802}\\
		\textsf{October 2019}
	\end{flushright}
	
	\vspace{15mm}
	
	\begin{center}
		\textbf{\LARGE{The Hochschild Complex of a Finite Tensor Category
		}}\\
		\vspace{1cm}
		{\large Christoph Schweigert and  Lukas Woike }
		
		\vspace{5mm}
		
	\normalsize
		{\slshape Fachbereich Mathematik\\ Universit\"at Hamburg\\
			Bereich Algebra und Zahlentheorie\\
			Bundesstra\ss e 55\\ D\,--\,20\,146\, Hamburg }
	\end{center}
	\vspace{0.3cm}
	\begin{abstract}
		\noindent Modular functors, i.e.\ 
		consistent systems of
		projective representations of mapping class groups of surfaces, have been constructed
		for non-semisimple modular categories already decades ago. Concepts from homological algebra have not been used in this construction although it is an obvious
		question  how they should enter in the non-semisimple case. In the
		present paper, we elucidate the interplay between the structures from
		topological field theory and from homological algebra by constructing
		a \emph{homotopy coherent} projective action of the mapping
		class group $\SL(2,\mathbb{Z})$ of the torus on the Hochschild complex of a
		modular category. This is a further step towards understanding
		the Hochschild complex of a modular category as a
		\emph{differential graded conformal block} for the torus. Moreover, we describe a differential graded version of the Verlinde algebra. 
	\end{abstract}

\tableofcontents
		
\newpage

\section{Introduction and outlook}
In this article, we prove several results for the Hochschild complex of a class of linear categories relevant in the representation theory of finite groups (or, more generally, finite-dimensional 
Hopf algebras) and for the construction of topological field theories and modular functors, namely \emph{finite tensor categories} as introduced in \cite{etinghofostrik}. These are linear Abelian monoidal categories with an exact tensor product which satisfy finiteness conditions and are rigid (we will additionally assume throughout that the base field is algebraically closed).
Finite tensor categories are \emph{not} assumed to be semisimple.
By the Hochschild complex of a finite tensor category $\cat{C}$ (or more generally any linear category) we understand the differential graded vector space given by 
the homotopy coend
\begin{align} \lint^{X \in \Proj \cat{C}} \cat{C}(X,X)  \label{vsontoruseqn}
\end{align}
over the endomorphism spaces $\cat{C}(X,X)$ of projective objects $X$ in $\cat{C}$ (depending on the terminology that one prefers, one might also call this the Hochschild complex of the category $\Proj \cat{C} \subset \cat{C}$ of projective objects). 
Homotopy coends and Hochschild homology are recalled in 
Section~\ref{secderivedenrichedcoends}.
If $\cat{C}$ is written as finite-dimensional modules over a finite-dimensional algebra $A$, then \eqref{vsontoruseqn} is equivalent to the Hochschild complex of $A$ \cite{mcarthy,keller}.\\

While we also investigate the Hochschild complex for general finite tensor categories, our main results apply to a certain class of finite tensor categories particularly relevant in quantum topology, especially in the study of Hopf algebras and vertex operator algebras, namely \emph{modular categories}  \cite{turaev,huang,egno}, i.e.\ finite tensor categories with braiding and ribbon structure such that the braiding is non-degenerate meaning that the only objects that trivially double braid with all other objects are finite direct sums of the unit. Note that this notion of modularity does not include semisimplicity. 

For a semisimple modular category $\cat{C}$, the Hochschild complex is equivalent to its zeroth homology, i.e.\  the vector space $\int^{X \in \cat{C}}\cat{C}(X,X)$. 
This vector space arises by evaluation of the Reshetikhin-Turaev topological field theory for $\cat{C}$ on the torus;
 we refer to  \cite{rt1,rt2,turaev} for the Reshetikhin-Turaev construction 
 and to \cite{BDSPV15} for the classification of 3-2-1-dimensional topological field theories by semisimple modular categories.
 This topological perspective on semisimple modular categories has two immediate consequences:
 \begin{enumerate}[label={\normalfont(\arabic*)}]
 	
 	\item Multiplicative structure: If we denote by $P : \mathbb{S}^1 \sqcup \mathbb{S}^1 \to   \mathbb{S}^1$ the pair of pants bordism, then we can evaluate the topological field theory associated to $\cat{C}$ on the bordism $P \times \mathbb{S}^1 : \mathbb{T}^2 \sqcup \mathbb{T}^2 \to \mathbb{T}^2$. This yields an associative  multiplication on $\int^{X \in \cat{C}} \cat{C}(X,X)$ which is induced by the tensor product of $\cat{C}$. The braiding of $\cat{C}$ ensures that the multiplication is commutative. The vector space $\int^{X \in \cat{C}} \cat{C}(X,X)$ with this multiplication is referred to as the \emph{Verlinde algebra of $\cat{C}$}.\label{pointverlinde}
 	
 	\item Mapping class group action: By being the value of a topological field theory on the torus,
 	$\int^{X\in\cat{C}}\cat{C}(X,X)$ carries an action of the mapping class group $\SL(2,\mathbb{Z})$ of the torus. 
 	As a result of the framing anomaly, this action will generally be only projective (the projectiveness can be built into the 3-2-1-dimensional bordism category, see however Remark~\ref{remproj}). The mapping class group action is not through algebra automorphisms of the Verlinde algebra (except for trivial cases). \label{pointmcg}
 	
 	\end{enumerate}
For a \emph{non-semisimple} modular category $\cat{C}$,
an equally satisfactory topological understanding of the Hochschild complex $\lint^{X \in \Proj \cat{C}} \cat{C}(X,X)$ is not available.
Still, we may ask whether the chain complex $\lint^{X \in \Proj \cat{C}} \cat{C}(X,X)$ carries a multiplicative structure and a mapping class group action generalizing the ones encountered in the semisimple case.  This paper answers these questions affirmatively. The difficulty in providing the needed generalizations is distributed unevenly between points~\ref{pointverlinde} and~\ref{pointmcg}.
Generalizing the multiplicative structure is relatively straightforward while establishing the mapping class group action is significantly more involved. \\

Let us state the results in detail: The commutative multiplication of the Verlinde algebra is replaced by an $E_2$-commutative multiplication, i.e.\ a multiplication whose commutativity behavior is controlled by the braid group:

\begin{repproposition}{propdgva}
	For every braided finite tensor category $\cat{C}$,
	the Hochschild complex $ \lint^{X \in \Proj \cat{C}} \cat{C}(X,X)$ is naturally a non-unital $E_2$-algebra in differential graded vector spaces. \end{repproposition}
This statement is established in Section~\ref{secdgva}. Besides proving the result abstractly, we concretely give the $E_2$-multiplication at the chain level (Corollary~\ref{core2atchainlevel}).

The strategies used to obtain a differential graded version of the Verlinde algebra and its  motivation by topological field theory
can be used to obtain,
for any finite group $G$, a candidate for Hochschild chains on a
braided $G$-crossed monoidal category $\cat{C} = \bigoplus_{g\in G} \cat{C}_g$
in the sense of Turaev
\cite{turaevhqft}, see \cite{galindo} for a slightly different definition that we will adopt.
We prove that this equivariant Hochschild complex carries an action of an operad built from 
Hurwitz spaces (Proposition~\ref{proplittlebundlesalgebra}); we refer to 
Section~\ref{secequiv}
for the details. \\

As our main result, we lift the mapping class group action from point~\ref{pointmcg} to a homotopy coherent framework:

\begin{reptheorem}{thmsl2z}
	The Hochschild complex $\lint^{X \in \Proj \cat{C}} \cat{C}(X,X)$ of a modular category $\cat{C}$
	carries a canonical homotopy coherent projective action of the mapping class group $\SL(2,\mathbb{Z})$ of the torus.
\end{reptheorem}
The proof of 
Theorem~\ref{thmsl2z}
 relies on a detailed investigation of the Hochschild complex of a finite tensor category $\cat{C}$:
As a key tool, we introduce in Section~\ref{sectracesclass} a specific projective resolution $\flint^{X \in \Proj \cat{C}}X \otimes  X^\vee$ of the canonical coend $\mathbb{F}=\int^{X \in \cat{C}} X \otimes X^\vee$ of a finite tensor category from \cite{lubacmp,luba,kl} and prove that we may express  the Hochschild chains of $\cat{C}$ up to equivalence by
\begin{align}
\lint^{X \in \Proj \cat{C}} \cat{C}(X,X) \simeq \cat{C}\left(I, \flint^{X \in \Proj \cat{C}}  X \otimes X^\vee \right) \ ,              \label{eqncorresocoend2}
\end{align}
see~Corollary~\ref{corresocoend2}.
If $\cat{C}$ is pivotal, we have such an equivalence also for \emph{any} projective resolution of $\mathbb{F}$ (Theo\-rem~\ref{theoderivedcoendviaobject}).
The proof of the latter fact uses the modified trace  on the tensor ideal of projective objects of a pivotal finite tensor category \cite{mtrace}.

Theorem~\ref{thmsl2z} extends previous results in this direction:
By \cite{sz} there is a projective $\SL(2,\mathbb{Z})$-action on the center of a ribbon factorizable Hopf algebra. Inspired by the fact that the center is just the zeroth Hochschild cohomology, in \cite{svea}  a projective $\SL(2,\mathbb{Z})$-action on the Hochschild cohomology of a ribbon factorizable Hopf algebra is constructed. In \cite{shimizu} this is phrased in terms of the representation categories. These actions  exist only on the (co)homology. It is a natural question to ask whether this action
can be described in a canonical way as a \emph{homotopy coherent projective action at the chain level}. Theorem~\ref{thmsl2z} answers this question affirmatively, see Corollary~\ref{corhopfalg} for the Hopf algebraic version.

The projectivity of this action arises naturally from the construction. For this reason, we work consistently with projective actions regardless of whether the action can be made linear by appropriate choices, see also Remark~\ref{remproj}.

Proposition~\ref{propdgva} and
Theorem~\ref{thmsl2z} suggest the interpretation of the complex $\lint^{X \in \Proj \cat{C}} \cat{C}(X,X)$ as a differential graded generalization of the conformal block for the torus in the sense of \cite{baki}.
In fact, this viewpoint very much informs the proof of 
 Theorem~\ref{thmsl2z}:
The complex $\lint^{X \in \Proj \cat{C}} \cat{C}(X,X)$ is expressed in a slightly different way which is inspired by a differential graded version of the factorization axiom for conformal blocks.	
Then we use the projective action of the braid group $B_3$ on three strands (the mapping class group of the torus with a disk removed)
on $\mathbb{F}$ given in \cite{lubamajid,lubacmp,luba}. The group $B_3$ is a central extension of $\SL(2,\mathbb{Z})$, and we prove and use a criterion for the projective action of $B_3$ to descend to a homotopy coherent projective action of $\SL(2,\mathbb{Z})$. 
This can be seen as a homotopy coherent version of the strategy used in  \cite{svea2}, see~Remark~\ref{remarkfactorization}.\\

 The construction of a fully-fledged differential graded modular functor, including a differential graded version of the sewing axioms and homotopy coherent projective actions of the mapping class groups of higher genus surfaces, is beyond the scope of this article.
 On the way to such a construction, the treatment of the torus is more than a special case. Instead, it is a key ingredient and technical prerequisite. The reason for this is the distinguished role played by genus one data in the Lego-Teichmüller game of Bakalov and Kirillov  \cite{bakifm}.
 %The focus on the torus in article also has several deeper reasons: The differential graded conformal block for the torus relates to 
  
\subparagraph{Conventions.} Throughout this text, we will work over an algebraically closed field $k$ which is \emph{not} assumed to have characteristic zero.

 By $\Ch$ we denote the symmetric monoidal category of differential graded vector spaces over $k$ (aka chain complexes over $k$) equipped with its projective model structure in which weak equivalences (for short: equivalences) are quasi-isomorphisms and fibrations are degree-wise surjections. 
Whenever we say that two complexes are canonically equivalent,
this will not necessarily mean that there is a canonical map between which is an equivalence, but more generally a zig-zag of such maps. We will denote equivalences and zig-zags thereof by the symbol $\simeq$.

 A (small) category enriched over $\Ch$ will be called a differential graded category. Unless otherwise stated, functors between differential graded categories will automatically be assumed to be enriched. Note that $\Ch$ is a differential graded category itself.
 
 For a category $\cat{C}$, the sets (or in the enriched setting: space, complex, $\dots$)
 of morphisms from $X \in \cat{C}$ to $Y \in \cat{C}$ will be denoted by $\cat{C}(X,Y)$.

\subparagraph{Acknowledgements.} 
We would like to thank 
	Adrien Brochier, 
	Jürgen Fuchs,
	David Jordan,
	André Henriques,
	Simon Lentner,
	Ehud Meir,
	Svea Nora Mierach,
	Lukas Müller 
	and Yorck Sommerhäuser
	for helpful discussions.

	CS and LW are supported by the RTG 1670 ``Mathematics inspired by String theory and Quantum
	Field Theory''.

\section{Homotopy coends and Hochschild homology\label{secderivedenrichedcoends}}
In this preliminary section, we discuss a suitable version of a homotopy coend that we obtain by slightly modifying the derived functor tensor product given in \cite{shulman} and \cite[Chapter~9]{riehl}.
We can see the construction also as the Hochschild-Mitchell chains on a differential graded category \cite{keller,cibils} with coefficients in a bimodule.
For the convenience of the reader, the presentation will be self-contained.

For differential graded categories $\cat{C}$ and $\cat{D}$,
 we denote by  $\cat{C} \otimes \cat{D}$ 
the differential graded category
whose objects are pairs $(X,Y) \in \cat{C} \times \cat{D}$ of objects of $\cat{C}$ and $\cat{D}$, which we will also denote as $X\times Y$, and whose morphism complexes are given by
\begin{align}
(\cat{C} \otimes \cat{D})(X \times Y,X'\times Y') := \cat{C}(X,X') \otimes \cat{D}(Y,Y') \quad \text{for} \quad X,X' \in \cat{C}\ , \quad Y,Y' \in \cat{D} \ .
\end{align}

\begin{definition}\label{defibar}
	Let $\cat{C}$ be a differential graded category. For a functor $F: \cat{C}^\op \otimes \cat{C} \to \Ch$ (by the above conventions, we will always assume that $F$ is enriched),
	 we define the \emph{(enriched) simplicial bar construction} as the simplicial object in $\Ch$ with $n$-simplices
	\begin{align} B_n F := \bigoplus_{X_0,\dots,X_n \in \cat{C}} \cat{C}(X_1,X_0) \otimes \dots \otimes \cat{C}(X_{n},X_{n-1}) \otimes F(X_0,X_n)\ .\end{align} 
	The face and degeneracy maps are defined similarly to \cite[Definition~9.1.1]{riehl}. The $j$-th face map composes morphisms over the $j$-th object thereby deleting it from the list of objects indexing the summand
	and the $j$-degeneracy maps inserts an identity at the $j$-th object thereby doubling it in the list of objects indexing the summand. In more detail, we have:
	\begin{itemize}
		
		\item The face map $\partial_0 : B_n F \to B_{n-1} F$ is induced by the map
		\begin{align}
		\cat{C}(X_1,X_0) \otimes F(X_0,X_n) \to F(X_1,X_n)
		\end{align} which is part of the data of $F$ being an enriched functor. 
		
		\item For $0<j<n$, the face map $\partial_j : B_n F \to B_{n-1} F$ arises from the composition map
		\begin{align}
		\cat{C}(X_{j},X_{j-1})\otimes \cat{C}(X_{j+1},X_{j}) \to \cat{C}(X_{j+1},X_{j-1}) \ .
		\end{align}

		\item The face map $\partial_n : B_n F \to B_{n-1} F$ is induced by the map
		\begin{align}
		\cat{C}(X_{n},X_{n-1}) \otimes F(X_0,X_n) \to F(X_{0},X_{n-1})
		\end{align} which is part of the data of $F$ being an enriched functor. 
		
		\item The degeneracy map $s_j : B_n F \to B_{n+1} F$ inserts an identity at $X_j$ using the canonical map $k \to \cat{C}(X_j,X_j)$ selecting the identity. 
		
	\end{itemize}
	
	We define the \emph{homotopy  coend} of $F$ as the realization of $B_* F$, i.e.\ by
	\begin{align} \int_\mathbb{L}^{X \in \cat{C}} F(X,X) := |B_* F|= \int^{n \in \Delta^\op} N_*(\Delta^n;k) \otimes B_n F\ , \end{align} where $N_*(\Delta^n;k)$ are the normalized chains on the standard simplex $\Delta^n$ with coefficients in $k$
	(equivalently, we may see $B_*F$ as a double complex and totalize).
\end{definition}

%\begin{remark}
%	By definition $\int_\mathbb{L}^{X \in \cat{C}} F(X,X)$ is the image of the simplicial chain complex $B_*F$ under the Dold-Kan correspondence, i.e.\ we can describe $\int_\mathbb{L}^{X \in \cat{C}} F(X,X)$ as the normalized chains $N_*(B_*F)$ on $B_*F$. Up to equivalence, we can also work with all chains, not only the normalized ones. Therefore, $\int_\mathbb{L}^{X \in \cat{C}} F(X,X)$
%	\end{remark}

For a differential graded category $\cat{C}$,
 a functor $F:\cat{C}^\op \otimes \cat{C} \to \Ch$ will also be referred to as \emph{$\cat{C}$-bimodule}. The homotopy coend $\lint^{X \in \cat{C}} F(X,X)$ will also be called the \emph{derived trace of $F$}.

The above constructions can be done for more general model categories than chain complexes over a field. But since we want to develop the techniques with an eye towards the intended applications,
we deliberately reduce the generality.  However, for a slight generalization that we need later on see Remark~\ref{remgendercoend}.

\begin{example}\label{exhochschildalgebra}
	For any $k$-algebra $A$ (by this we always mean an associative and unital $k$-algebra), the category $\star // A$ with one object whose endomorphisms are given by $A$ is a differential graded category (in fact, a linear category in this case), and a functor $\left( \star //A \right)^\op \otimes (\star // A) \to \Ch$ is a differential graded module $M$ over the enveloping algebra $A\e = A^\op \otimes A$ of $A$, i.e.\
	an $A$-bimodule. Now the homotopy coend of $M$ over $\star //A$ is just given by the Hochschild chains for the algebra $A$ with coefficients in the $A\e$-module $M$ which we denote by $CH(A;M)$. 
\end{example}

\begin{remark}\label{remarkreversal}
	For a differential graded category $\cat{C}$ and a $\cat{C}$-bimodule $F:\cat{C}^\op \otimes \cat{C} \to \Ch$, we obtain a $\cat{C}^\op$-bimodule $F^\op:\cat{C}\otimes\cat{C}^\op \to \Ch$ by precomposition of $F$ with the flip map. 
	Then by reading backwards the families of objects used for the definition of the bar construction of $F$, we obtain a reversal isomorphism
	$\lint^{X \in \cat{C}^\op} F^\op(X,X)\cong\lint^{X\in\cat{C}}F(X,X)$.
\end{remark}
% proof_reversal.pdf

One key feature of the homotopy  coend 
is its homotopy invariance:

\begin{proposition}\label{homotopyinvariancepropo}
	Let $\cat{C}$ be a differential graded category. Then an equivalence $F \xrightarrow{\ \simeq\ } G$  between functors $F,G: \cat{C} ^\op \otimes \cat{C} \to \Ch$ induces an equivalence
	\begin{align}
	\lint^{X \in \cat{C}} F(X,X)\xrightarrow{\ \simeq\ } \lint^{X \in \cat{C}} G(X,X)\ .
	\end{align}
\end{proposition}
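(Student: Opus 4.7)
The plan is to establish the equivalence in two steps: first at the level of the simplicial object $B_*F$ degree by degree, and then after realization. For the first step, I fix $n$ and a tuple $(X_0,\ldots,X_n)$ of objects of $\cat{C}$. The summand of $B_n F$ indexed by this tuple differs from the corresponding summand of $B_n G$ only through the last tensor factor $F(X_0,X_n)$ versus $G(X_0,X_n)$. Since we work over a field $k$, every chain complex is flat, so $W \otimes (-)$ preserves quasi-isomorphisms for every fixed $W \in \Ch$; this makes the map induced by $F \to G$ a quasi-isomorphism on each individual summand. Homology commutes with direct sums in $\Vect_k$, so the induced map $B_n F \to B_n G$ is an equivalence for every $n \geq 0$.

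For the second step, I would argue that the realization functor $|{-}|:\catf{s}\Ch \to \Ch$ sends levelwise equivalences to equivalences. The coend $\int^{n \in \Delta^\op} N_*(\Delta^n;k) \otimes B_n F$ can be identified, via the Eilenberg--Zilber / Dold--Kan comparison, with the direct-sum totalization of the bicomplex whose $n$-th column is $B_n F$, with horizontal differential $\sum_j (-1)^j \partial_j$ and the internal differentials of the $B_n F$ as the vertical ones. The map of bicomplexes induced by $F \xrightarrow{\simeq} G$ is a quasi-isomorphism on every column by the first step. Filtering by simplicial degree gives a spectral sequence whose $E^1$-page records the homology of each column, and the first step yields an isomorphism on $E^1$, hence on all later pages and on the abutment. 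Since the bicomplex is supported in non-negative simplicial degrees the filtration is bounded below and exhaustive, so convergence is automatic, and the desired quasi-isomorphism on totalizations follows.

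The main obstacle, and the step requiring the most care, is the identification of the categorical realization with the totalization of the associated bicomplex and the verification of convergence; this is a standard fact that must, however, be invoked explicitly. An alternative route avoiding spectral sequences is to filter $|B_*F|$ by simplicial skeleta and proceed by induction on $n$: each skeleton inclusion is a cofibration in $\Ch$ whose cofiber is a suitable shift of the non-degenerate part of $B_{n+1}F$, so once $B_{n+1}F \to B_{n+1}G$ is known to be a quasi-isomorphism, two-out-of-three propagates the equivalence to the next skeleton. The result then passes to the sequential colimit since $H_*$ commutes with the sequential colimit of cofibrant inclusions of complexes over a field.
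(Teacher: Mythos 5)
Your argument is correct, but it proves the key step by hand where the paper outsources it to model-categorical machinery. The paper's proof consists of two ingredients: the (implicit) observation that $B_nF\to B_nG$ is a levelwise equivalence, and Lemma~\ref{lemmahocolim}, which identifies $\lint^{X\in\cat{C}}F(X,X)$ with $\hocolimsub{n\in\Delta^\op}B_nF$ using that every simplicial vector space is Reedy cofibrant together with \cite[Theorem~19.8.4~(1)]{Hirschhorn}; homotopy invariance is then automatic because homotopy colimits preserve levelwise equivalences. Your first step coincides with the paper's first ingredient (and you make explicit the inputs the paper leaves tacit: flatness over a field and the compatibility of homology with direct sums). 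For the second step you replace the citation of Hirschhorn by the spectral sequence of the column filtration on the direct-sum totalization, with convergence guaranteed by the bounded-below, exhaustive filtration coming from the non-negative simplicial grading; your skeletal-filtration alternative is essentially the proof of the Reedy-cofibrancy statement unwound in this special case, with the splitting-off of the degenerate part playing the role of Reedy cofibrancy. What your route buys is a self-contained, elementary argument that pinpoints exactly where the hypothesis that $k$ is a field enters; what the paper's route buys is brevity and a statement (realization computes the homotopy colimit) that it reuses elsewhere. Both are valid; do note explicitly, as you do, that the identification of the coend $\int^{n\in\Delta^\op}N_*(\Delta^n;k)\otimes B_nF$ with the totalization of the associated bicomplex is being invoked --- the paper asserts this parenthetically in Definition~\ref{defibar}, so it is available to you.
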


%\begin{proof}
%	Tensoring with chain complexes over a field preserves equivalences and so does taking coproducts, 
%	which implies that the induced map $B_* F \to B_*G$ is an equivalence. 
%	Now the assertion follows from Lemma~\ref{lemmahocolim} and the homotopy invariance of the homotopy colimit.
%\end{proof}

The homotopy invariance for the homotopy coend is an extremely crucial  built-in property (otherwise the homotopy coend would not deserve its name); it will be used without further mention.
The proof of Proposition~\ref{homotopyinvariancepropo} readily follows from the following Lemma, which we obtain from the fact that
every simplicial vector space is Reedy cofibrant and \cite[Theorem~19.8.4 (1)]{Hirschhorn} applied
to the framing obtained by normalized chains on the standard simplices:

\begin{lemma}\label{lemmahocolim}
	Let $\cat{C}$ be a differential graded category and $F:\cat{C}^\op \otimes \cat{C} \to \Ch$ a functor.
	Then there is a natural equivalence
	\begin{align}
	\lint^{X\in\cat{C}} F(X,X) \simeq \hocolimsub{n \in \Delta^\op}\,  B_n F \ . 
	\end{align}
\end{lemma}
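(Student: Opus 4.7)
The plan is to identify the derived coend with a two-sided bar construction and then invoke the standard recognition of homotopy colimits via framings. By Definition~\ref{defibar} we have
\begin{align}
\lint^{X\in\cat{C}} F(X,X) = \int^{n \in \Delta^\op} N_*(\Delta^n;k) \otimes B_n F\ ,
\end{align}
which is nothing but the coend of the simplicial object $B_\bullet F \in \Ch^{\Delta^\op}$ weighted by the cosimplicial object $N_*(\Delta^\bullet;k) \in \Ch^{\Delta}$. Hirschhorn's recognition result \cite[Theorem~19.8.4(1)]{Hirschhorn} asserts that whenever $W^\bullet$ is a Reedy cofibrant cosimplicial frame on the monoidal unit of a framed model category and $Y_\bullet$ is a Reedy cofibrant simplicial object, the coend $\int^n W^n \otimes Y_n$ naturally represents $\hocolimsub{n\in\Delta^\op} Y_n$. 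It therefore suffices to check two Reedy cofibrancy statements.

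For the cosimplicial framing $N_*(\Delta^\bullet;k)$, the $n$-th latching object is the normalized chain complex of the boundary $\partial\Delta^n$, included as a subcomplex of $N_*(\Delta^n;k)$ degree-wise. Since we work over a field, every degree-wise injection of chain complexes is a cofibration in the projective model structure on $\Ch$ (short exact sequences of vector spaces split). Hence each latching map is a cofibration, and because the augmentation $N_*(\Delta^n;k)\xrightarrow{\ \simeq\ }k$ is a quasi-isomorphism in every cosimplicial degree, $N_*(\Delta^\bullet;k)$ is a Reedy cofibrant frame on the unit $k \in \Ch$.

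For the simplicial object $B_\bullet F$, the Eilenberg--Zilber/Dold--Kan splitting of a simplicial object in $\Ch$ decomposes each $B_n F$ as a direct sum of its non-degenerate and degenerate summands, and exhibits the latching map $L_n(B F)\to B_n F$ as a split monomorphism in $\Ch$, hence a cofibration. Concretely, the direct-sum decomposition of $B_n F$ indexed by tuples $(X_0,\dots,X_n)$ of objects of $\cat{C}$ together with the fact that the degeneracy $s_j$ merely inserts an identity makes the image of the latching inclusion a direct summand. This is the precise content of the slogan ``every simplicial vector space is Reedy cofibrant'' that we invoke. Combining both points, Hirschhorn's theorem yields the asserted natural equivalence. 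The only non-formal ingredient is the splitting of $B_\bullet F$, and this is an instance of the classical Dold--Kan splitting for simplicial objects in $\Ch$ over a field; it requires no input specific to the bar construction.
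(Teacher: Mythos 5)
Your argument is correct and follows exactly the route the paper takes: the paper deduces this lemma from the Reedy cofibrancy of the simplicial object $B_*F$ together with \cite[Theorem~19.8.4~(1)]{Hirschhorn} applied to the framing by normalized chains on standard simplices. You have merely spelled out the two cofibrancy checks that the paper leaves implicit, and both verifications are sound.
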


%\begin{lemma}\label{lemmonocof}
%	Every simplicial differential graded vector space is Reedy cofibrant.
%\end{lemma}

%\begin{proof}
%	For a simplicial differential graded vector space $X$ we have the decomposition $X_n = L_n X \oplus N_n X$ of $X_n$ into the $n$-th latching object $L_n X$ and the non-degenerate $n$-simplices $N_n X$ \cite[Remark~5.0.10]{Fresse2}. 
%	This proves that the $n$-latching map $L_n X \to X_n$ is a monomorphism and hence a cofibration (because we work over a field).
%	This implies by definition that $X$ is Reedy cofibrant.
%\end{proof}

\subsection{Yoneda Lemma and Fubini Theorem}
Next we discuss some important tools which will help us to compute with homotopy  coends. They generalize to some extent 
the calculus for ordinary coends \cite{maclane}. We will often encounter the requirement that both the differential graded category and the bimodule are concentrated in non-negative degree (or more generally, bounded below).
First, we formulate the Yoneda Lemma:

\begin{proposition}
	\label{propyoneda}
	Let $\cat{C}$ be a differential graded category and $H : \cat{C} \to \Ch$ a functor 
	such that
	both the morphism spaces of $\cat{C}$ and the values of $H$ are concentrated in non-negative degree.
	Then there is a canonical equivalence
	\begin{align}
	\lint^{X \in \cat{C}} \cat{C}(X,-) \otimes H(X) \xrightarrow{\ \simeq\ } H\ . \label{eqnyonedaeqn}
	\end{align}
	Moreover, this map is surjective and hence a trivial fibration.
\end{proposition}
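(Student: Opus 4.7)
The plan is to identify the Yoneda map \eqref{eqnyonedaeqn} with the augmentation of a suitable simplicial bar construction and to trivialize this augmentation by an extra-degeneracy argument. Fix $Y \in \cat{C}$ and set $F_Y(X,X') := \cat{C}(X,Y) \otimes H(X')$, which is a $\cat{C}$-bimodule contravariant in $X$ and covariant in $X'$; by Definition~\ref{defibar} we have $\lint^{X\in\cat{C}} \cat{C}(X,Y) \otimes H(X) = |B_* F_Y|$. The enriched functoriality of $H$ provides, on each $X_0$-summand of $B_0 F_Y$, an augmentation $\epsilon_Y \colon B_0 F_Y \to H(Y)$ via the action map $\cat{C}(X_0,Y) \otimes H(X_0) \to H(Y)$. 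A direct computation on $B_1 F_Y$ using the two $\cat{C}$-actions exhibited in Definition~\ref{defibar} gives $\epsilon_Y \partial_0 = \epsilon_Y \partial_1$, so $\epsilon_Y$ extends to a simplicial map from $B_* F_Y$ into the constant simplicial chain complex $H(Y)$; realizing yields the natural (in $Y$) map $\lint^{X} \cat{C}(X,Y) \otimes H(X) \to H(Y)$ of \eqref{eqnyonedaeqn}.

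To show this map is an equivalence, I would construct an extra degeneracy $s_{-1} \colon B_n F_Y \to B_{n+1} F_Y$ mapping the $(X_0,\ldots,X_n)$-summand into the $(Y,X_0,\ldots,X_n)$-summand by
\[
f_1 \otimes \cdots \otimes f_n \otimes g \otimes h \ \mapsto \ g \otimes f_1 \otimes \cdots \otimes f_n \otimes \id_Y \otimes h,
\]
with $g \in \cat{C}(X_0,Y)$ promoted to the new leading $\cat{C}$-factor and $\id_Y \in \cat{C}(Y,Y)$ filling the vacated $\cat{C}(-,Y)$-slot of $F_Y$; at augmentation level, $s_{-1} \colon H(Y) \to B_0 F_Y$ sends $h$ to $\id_Y \otimes h$ into the $X_0 = Y$ summand. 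A routine but careful bookkeeping against Definition~\ref{defibar} then checks the extra-degeneracy identities $\partial_0 s_{-1} = \id$, $\partial_{i+1} s_{-1} = s_{-1} \partial_i$, and $s_{i+1} s_{-1} = s_{-1} s_i$ for $i \geq 0$, where $\partial_0 s_{-1} = \id$ reduces to the unitality of composition in $\cat{C}$.

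An augmented simplicial chain complex admitting an extra degeneracy possesses an explicit chain contraction of its augmentation; consequently $|B_* F_Y| \to H(Y)$ is a chain homotopy equivalence and hence an equivalence in $\Ch$, which, combined with Lemma~\ref{lemmahocolim}, delivers \eqref{eqnyonedaeqn}. The non-negativity hypothesis keeps all relevant complexes concentrated in non-negative degrees, which is the regime in which the extra-degeneracy contraction produces an honest chain homotopy. For surjectivity, the level-zero section $s_{-1} \colon H(Y) \to B_0 F_Y$ is degree-preserving; composed with the canonical inclusion $B_0 F_Y \to |B_* F_Y|$ it gives a degree-preserving section of the realization of $\epsilon_Y$, so \eqref{eqnyonedaeqn} is degree-wise surjective, hence a fibration in the projective model structure, and together with the equivalence, a trivial fibration.

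The main obstacle will be the simplicial bookkeeping in the second step: the tensor factor $\cat{C}(X_0,Y)$ sitting inside $F_Y$ is neither a "composable morphism" $f_i$ nor a passive coefficient, and it interacts with $\partial_0$ via composition in $\cat{C}$ and with $s_{-1}$ by migrating into the new leading $\cat{C}$-slot; verifying the identities above therefore amounts to a somewhat delicate case analysis across the $f_i$'s, the $\cat{C}(X_0,Y)$-factor, and the degeneracy insertions. Once this is settled, the remainder reduces to the standard contractibility of a split augmented simplicial object, and the non-negativity hypothesis plays only the supporting role of ruling out convergence issues in the realization.
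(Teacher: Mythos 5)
Your proposal is correct and follows essentially the same route as the paper: identify the Yoneda map with the realization of the augmentation $B_*(\cat{C}(-,Y)\otimes H) \to H(Y)$, and trivialize it by the extra degeneracy that moves the $\cat{C}(X_0,Y)$-factor to the front and inserts $\id_Y$ into the vacated slot, with $\partial_0 s_{-1} = \id$ reducing to unitality. The paper cites a standard reference for the fact that extra degeneracies force the realized augmentation to be an equivalence, where you invoke the explicit chain contraction, but this is the same content.
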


\begin{proof}
	For $Y \in \cat{C}$, the maps $\cat{C}(X,Y) \otimes H(X) \to H(Y)$ provide an augmentation $B_* (\cat{C}(-,Y) \otimes H) \to H(Y)$ whose realization is a map
	\begin{align}
	\lint^{X \in \cat{C}} \cat{C}(X,Y) \otimes H(X) \to H(Y)\label{eqnyonedaeqn2}
	\end{align}  which is natural in $Y$ and therefore gives us the map \eqref{eqnyonedaeqn}. 
	This map is clearly surjective.

	It remains to show that for fixed $Y$ the map \eqref{eqnyonedaeqn2} is an equivalence: 
	Thanks to the assumptions on $\cat{C}$ and $H$, the homotopy coend 
	$	\lint^{X \in \cat{C}} \cat{C}(X,-) \otimes H(Y)$
	is the realization of the simplicial object $B_* (\cat{C}(-,Y) \otimes H)$ in the simplicial model category of non-negatively graded chain complexes over $k$.
	By \cite[Corollary~4.5.2]{riehl} the augmentation \eqref{eqnyonedaeqn2} is an equivalence if we can exhibit extra degeneracies for the augmentation, see \cite[Section~III.5]{goerssjardine} for a definition of this notion. 
	
	We construct extra degeneracies for the augmentation as follows: 
	For $n\ge 0$, the summand of $B_n (\cat{C}(-,Y) \otimes H)$ belonging to a family $(X_0,\dots,X_n)$ of objects in $\cat{C}$ is given by
	\begin{align}
	\cat{C}(X_1,X_0) \otimes \dots \otimes \cat{C}(X_n,X_{n-1}) \otimes \cat{C}(X_0,Y) \otimes H(X_n) \ . 
	\end{align} 
	Using the identity of $Y$ and the symmetric braiding on $\Ch$, this summand admits a natural map to
	\begin{align}
	\cat{C}(X_0,Y) \otimes 	\cat{C}(X_1,X_0) \otimes \dots \otimes \cat{C}(X_n,X_{n-1})   \otimes \cat{C}(Y,Y) \otimes H(X_n) \ , 
	\end{align}
	i.e.\ to the summand of
	$B_{n+1} (\cat{C}(-,Y) \otimes H)$ 
	belonging to $(Y,X_0,\dots,X_n)$.
	This yields a map $s_{-1}^n : B_n (\cat{C}(-,Y) \otimes H) \to B_{n+1} (\cat{C}(-,Y) \otimes H)$. It is straightforward to verify that this gives us extra degeneracies for the augmentation.
\end{proof}

For  homotopy coends, there is a Fubini Theorem making a statement about the `order of integration' for iterated coends:

\begin{proposition}\label{fubinisthmprop}
	Let $\cat{C}$ and $\cat{D}$ be differential graded categories and $F : \left(   \cat{C}  \otimes \cat{D} \right)  ^\op \otimes \cat{C} \otimes \cat{D}\to \Ch$  
	a functor.
	Then there is a  natural isomorphism\label{fubinisthmpropa}
	\begin{align}
	\lint^{X \in \cat{C}} \lint^{Y \in \cat{D}} F(  X \times Y,X\times Y     ) \cong \lint^{Y \in \cat{D}} 	\lint^{X \in \cat{C}}  F(  X \times Y,X\times Y     ) \ .
	\end{align}
\end{proposition}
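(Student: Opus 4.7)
The plan is to show that both iterated derived coends are canonically isomorphic to the realization of a single bisimplicial chain complex built symmetrically from $\cat{C}$ and $\cat{D}$; the Fubini isomorphism then arises from the manifest symmetry of this bisimplicial object under swapping the two directions. No homotopical input beyond the definition will be needed, since the statement is a strict isomorphism.

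First, unpacking definitions, let me introduce the bisimplicial chain complex $B_{\bullet,\bullet} F$ with
\begin{align}
B_{n,m} F := \bigoplus_{\substack{X_0,\dots,X_n \in \cat{C} \\ Y_0,\dots,Y_m \in \cat{D}}} \cat{C}(X_1,X_0) \otimes \dots \otimes \cat{C}(X_n,X_{n-1}) \otimes \cat{D}(Y_1,Y_0) \otimes \dots \otimes \cat{D}(Y_m,Y_{m-1}) \otimes F(X_0 \times Y_0, X_n \times Y_m),
\end{align}
whose horizontal faces/degeneracies act on the $\cat{C}$-indices (composing $\cat{C}$-morphisms, or evaluating $F$ in its first/last $\cat{C}$-slot), and whose vertical faces/degeneracies act on the $\cat{D}$-indices in exactly the analogous way. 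Enriched bifunctoriality of $F$ guarantees that horizontal and vertical structure maps commute, so $B_{\bullet,\bullet} F$ really is a bisimplicial object in $\Ch$. The symmetric monoidal braiding of $\Ch$ provides a tautological isomorphism $B_{n,m} F \cong B_{m,n} F^{\mathrm{flip}}$ where $F^{\mathrm{flip}}$ denotes $F$ precomposed with swapping $\cat{C}$ and $\cat{D}$; this is the symmetry I will exploit at the end.

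Next I identify the two iterated derived coends as realizations of $B_{\bullet,\bullet} F$ in the two possible orders. Write $G(X,X') := \lint^{Y \in \cat{D}} F(X \times Y, X' \times Y)$, an enriched $\cat{C}$-bimodule because the derived coend is functorial in enriched parameters (the bar construction $B_{\bullet}^{\cat{D}}$ only involves $F$-values and $\cat{D}$-morphism complexes tensored together, and all of these are functorial in $X,X'$). Then $\lint^{X \in \cat{C}} G(X,X)$ is by definition the realization of $B_{\bullet}^{\cat{C}} G$. Since the realization of a simplicial chain complex is built from direct sums and tensor products with the fixed complexes $N_*(\Delta^m;k)$, it commutes with the direct sum over the tuples $(X_0,\dots,X_n)$ and with tensoring by the fixed complex $\cat{C}(X_1,X_0) \otimes \dots \otimes \cat{C}(X_n,X_{n-1})$. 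Applying this interchange degree by degree shows
\begin{align}
B_n^{\cat{C}} G \cong |B_{n,\bullet} F|_{\mathrm{vert}},
\end{align}
and then a further application of the same principle (realization commutes with realization for bisimplicial chain complexes) yields
\begin{align}
\lint^{X \in \cat{C}} \lint^{Y \in \cat{D}} F(X \times Y, X \times Y) \cong |B_{\bullet,\bullet} F|,
\end{align}
where on the right we realize the bisimplicial object in the totalized sense (equivalently as the double complex totalization, as indicated in Definition~\ref{defibar}). Running the same argument with the roles of $\cat{C}$ and $\cat{D}$ reversed identifies $\lint^{Y \in \cat{D}} \lint^{X \in \cat{C}} F(X \times Y, X \times Y)$ with the same realization $|B_{\bullet,\bullet} F|$, and the symmetry noted above shows the two identifications agree up to natural isomorphism.

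The main bookkeeping obstacle is verifying that the horizontal and vertical simplicial structure maps on $B_{\bullet,\bullet} F$ commute, i.e.\ that $B_{\bullet,\bullet} F$ genuinely is a bisimplicial object; this reduces to the fact that the enriched bifunctoriality of $F$ in its $\cat{C}$- and $\cat{D}$-arguments makes the evaluation of $\cat{C}$-morphisms and $\cat{D}$-morphisms on $F(X_0 \times Y_0, X_n \times Y_m)$ strictly commute. A minor further point is the commutation of realization with the (infinite) direct sum over the object families, which holds because $N_*(\Delta^n;k)$ is a bounded complex of finite-dimensional vector spaces so that the tensor products involved preserve colimits in each variable. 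Once these are checked, the stated isomorphism is immediate and clearly natural in $F$.
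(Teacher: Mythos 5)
Your proof is correct and follows essentially the same route as the paper: both arguments unpack the iterated derived coend into a double bar construction over $\Delta^\op\times\Delta^\op$ and conclude by the fact that tensor products, direct sums and coends commute with one another (your bisimplicial packaging is just a more structured presentation of the paper's explicit formula). Your worry about commuting realization with infinite direct sums is unnecessary but harmless, since $-\otimes V$ preserves all colimits in each variable regardless of finiteness of $N_*(\Delta^n;k)$.
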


\begin{proof}
	From the definitions we  obtain\small
	\begin{align}
	&	\lint^{X \in \cat{C}} \lint^{Y \in \cat{D}} F(  X \times Y,X\times Y     ) \\= &\int^{m\in \Delta^\op} N_*(\Delta^m;k) \otimes \left(  \bigoplus_{X_0,\dots,X_m \in \cat{C}} \cat{C}(X_1,X_0) \otimes \dots \otimes \cat{C}(X_{m},X_{m-1})    \right. \\ &\left.  \otimes \int^{n\in \Delta^\op} N_*(\Delta^n;k)  \otimes \left(    \bigoplus_{Y_0,\dots,Y_n \in \cat{D}} \cat{D}(Y_1,Y_0) \otimes \dots \otimes \cat{D}(Y_{n},Y_{n-1}) \otimes F(X_0\times Y_0,X_m\times Y_n)           \right)    \right)
	\ .
	\end{align}\normalsize
	Using that tensor products of chain complexes,
	direct sums and coends commute, we see that this is canonically 
	isomorphic to $\lint^{Y \in \cat{D}} 	\lint^{X \in \cat{C}}  F(  X \times Y,X\times Y     )$. 
\end{proof}

The above Proposition tells us that the `order of integration does not matter'.
Therefore, instead of $	\lint^{X \in \cat{C}} \lint^{Y \in \cat{D}}$ or $\lint^{Y \in \cat{D}}	\lint^{X \in \cat{C}} $, we will just write $\lint^{\substack{ X\in \cat{C} \\ Y \in \cat{D}}}$.

\subsection{Agreement principle\label{secderivedcoendsproj}}
In the sequel, we will often have to evaluate homotopy coends over (sub)categories of modules over some algebra. It is a pertinent question whether such a homotopy coend can be reduced to a homotopy coend over the one-object category associated to that algebra and hence to (ordinary) Hochschild chains (Example~\ref{exhochschildalgebra}). 
This leads us to an Agreement Principle that goes back to \cite{mcarthy,keller},
where it appears in a slightly different form. We explain the relation after Corollary~\ref{corprojectiveforbimodules}.

We refer to a $k$-linear category as a \emph{finite-dimensional algebroid over $k$} if it is equivalent to a $k$-linear category with finitely many objects and finite-dimensional morphism spaces. The main example is the one-object category whose endomorphisms are given by a finite-dimensional $k$-algebra.

For  a finite-dimensional algebroid $\cat{A}$ over $k$,
we denote by 
$\Mod_k \cat{A}$ the $k$-linear category of all finite-dimensional $\cat{A}$-modules.
Here, a finite-dimensional $\cat{A}$-module is a $k$-linear functor from $\cat{A}$ to finite-dimension\-al $k$-vector spaces.
By $\Proj_k \cat{A}\subset \Mod_k \cat{A}$ we denote the full $k$-linear subcategory  of finite-dimension\-al projective $\cat{A}$-modules. 
We refer to
\cite[Section~2.2]{weibel}
for the usual equivalent descriptions of projective modules.
Following our general conventions for the notation, we denote by $\cat{A}(-,-)$, $\Mod_k \cat{A}(-,-)$ and $\Proj_k \cat{A}(-,-)$ the morphism spaces in these $k$-linear categories.

There is a canonical embedding
$
\iota_\cat{A} : \cat{A}^\op \to \Proj_k \cat{A}
$ sending $a \in \cat{A}$ to $\cat{A}(a,-)$ along which we can restrict homotopy coends over $\Proj_k \cat{A}$. To make a statement about such restricted homotopy coends, we need the following Lemma:

\begin{lemma}\label{lemmadualhom}
	For any 
	finite-dimensional algebroid $\cat{A}$ over $k$,
	any functor $F:\left( \Proj_k \cat{A}\right)^\op \otimes \Proj_k \cat{A} \to \Ch$
	whose values are concentrated in non-negative degree
	and 
	$X,Y\in \Proj_k \cat{A}$ 
	the natural map
	\begin{align}	\lint^{a\in \cat{A}}           \Proj_k \cat{A}(X,\iota_\cat{A}(a)) \otimes F(\iota_\cat{A}(a), Y) \xrightarrow{\  \simeq  \    }  F(X,Y)    \label{eqnmapagreementlemma}
	\end{align} is an equivalence. 
\end{lemma}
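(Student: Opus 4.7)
The plan is to reduce the general case to the case $X = \iota_\cat{A}(b)$ for some $b \in \cat{A}$, and then to invoke the derived Yoneda Lemma (Proposition~\ref{propyoneda}) for the ordinary $k$-linear category $\cat{A}$ itself. Both sides of \eqref{eqnmapagreementlemma}, as well as the comparison map between them (which comes from the bimodule action of $F$ on morphisms, exactly as in the augmentation arguments in the proof of Proposition~\ref{propyoneda}), depend $k$-linearly and contravariantly on $X$: the right hand side because $F$ is an enriched functor, and the left hand side because $\Proj_k\cat{A}(-,\iota_\cat{A}(a))$ is $k$-linear and the derived coend, being a homotopy colimit, commutes with finite direct sums of its integrand. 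Hence, if the equivalence holds on representables, it holds on finite direct sums of representables.

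Now every $X \in \Proj_k\cat{A}$ is a retract of such a finite direct sum $\bigoplus_{i=1}^n \iota_\cat{A}(b_i)$, because $\Proj_k\cat{A}$ is the Karoubi envelope of the full subcategory of free $\cat{A}$-modules. The splitting idempotent acts compatibly on both sides of \eqref{eqnmapagreementlemma} and on the comparison map; since a retract of a quasi-isomorphism is again a quasi-isomorphism (a consequence of homotopy invariance, Proposition~\ref{homotopyinvariancepropo}), the claim for $X$ follows from the claim for the free module in which $X$ sits as a direct summand.

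It remains to treat the representable case $X = \iota_\cat{A}(b)$. The classical Yoneda Lemma (equivalently, full-faithfulness of $\iota_\cat{A}$) identifies $\Proj_k\cat{A}(\iota_\cat{A}(b),\iota_\cat{A}(a)) \cong \cat{A}(a,b)$, so the left hand side of \eqref{eqnmapagreementlemma} becomes $\lint^{a \in \cat{A}} \cat{A}(a,b) \otimes F(\iota_\cat{A}(a),Y)$. Defining $H : \cat{A} \to \Ch$ by $H(a) := F(\iota_\cat{A}(a),Y)$ yields a functor whose values are concentrated in non-negative degree by the hypothesis on $F$, while the morphism spaces of $\cat{A}$ sit in degree zero. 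Hence Proposition~\ref{propyoneda} applies to the pair $(\cat{A},H)$ and, evaluated at $b \in \cat{A}$, yields $\lint^{a \in \cat{A}} \cat{A}(a,b) \otimes H(a) \simeq H(b) = F(X,Y)$.

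The main obstacle I expect is one of bookkeeping rather than any essentially new homotopical input: one has to ensure that the Yoneda equivalence from Proposition~\ref{propyoneda}, whose source is the augmentation of the bar construction built from the action of morphisms of $\cat{A}$ on $H$, matches the comparison map of \eqref{eqnmapagreementlemma} under the Yoneda identification $\Proj_k\cat{A}(\iota_\cat{A}(b),\iota_\cat{A}(a)) \cong \cat{A}(a,b)$, and that the splitting idempotents used in the retract step are transported correctly through all of these identifications. This is a routine but somewhat tedious unwinding of the definitions of the bar construction and the enriched functoriality of $F$.
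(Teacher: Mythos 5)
Your proposal is correct and takes essentially the same route as the paper's own proof: reduce to the representable case $X=\iota_\cat{A}(b)$ using that both sides and the comparison map preserve finite direct sums and that any projective is a direct summand of a finite sum of representables (the paper phrases your retract step as ``true for $X\oplus X'$ if and only if true for $X$ and $X'$'', exploiting that the enriched $F$ and the derived coend preserve finite biproducts), and then settle the representable case by the derived Yoneda Lemma (Proposition~\ref{propyoneda}) applied to $H(a)=F(\iota_\cat{A}(a),Y)$ over $\cat{A}$. The only cosmetic difference is that you invoke the Karoubi-envelope description where the paper explicitly constructs and splits the surjection $\bigoplus_{a}\cat{A}(a,-)\otimes X(a)\to X$.
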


\begin{proof} 
	Without loss of generality, we can assume that $\cat{A}$ has finitely many objects and finite-dimensional morphism spaces.
	We now observe that for a fixed projective $\cat{A}$-module $Y$, the statement that the map
	\eqref{eqnmapagreementlemma} is an equivalence
	is true in the following cases:
	\begin{xenumerate}
		\item  For $X=\cat{A}(b,-)$ for any $b\in \cat{A}$, it is true by the Yoneda Lemma (Proposition~\ref{propyoneda}), \label{statementfree}
		\item  It is true for finite-dimensional $\cat{A}$-modules $X$ and $X'$ if and only if it is true for $X\oplus X'$. \label{statementsum}
		Here we use that $F$ by our conventions is always assumed to be enriched. As a consequence, it preserves finite biproducts, i.e.\ $F(X\oplus X',Y)\cong F(X,Y)\oplus F(X',Y)$.
	\end{xenumerate}
	Now if $X$ is an \emph{arbitrary} finite-dimensional and projective $\cat{A}$-module, then the finite-dimensional $\cat{A}$-module $Z:= \bigoplus_{a \in \cat{A}} \cat{A}(a,-) \otimes X(a)$ comes with a canonical surjection $\pi : Z \to X$,
	and the short exact sequence \begin{align} 0 \to \ker \pi \to Z \to X\to 0\end{align} splits by projectivity of $X$, and hence $Z \cong X \oplus \ker \pi$.
	From \ref{statementfree} and \ref{statementsum} it follows that the statement is true for $Z$ and therefore also for $X$ by \ref{statementsum}.
\end{proof}

%\LW{Should be left out later:}
%\color{red}Indeed, the $\cat{A}$-module is projective because for any epimorphism $\pi : X \to Y$ of $\cat{A}$-modules a $\pi$-lift $\widetilde \phi : \cat{A}(a,-) \to X$ of a morphism $\phi : \cat{A}(a,-) \to Y$ is given by $\widetilde \phi (f) = X(f) s \phi (\id_a)$, where $s : Y(a) \to X(a)$ is a section of $\pi_a : X(a) \to Y(a)$. Furthermore, it is clear that $\iota_\cat{A}$ is an embedding.\color{black}

\begin{theorem}[Agreement principle]\label{theoprojectiveforbimodules}
	Let $\cat{A}$ be a finite-dimensional algebroid over $k$
	and $F: \left( \Proj_k \cat{A}\right)^\op \otimes \Proj_k \cat{A} \to \Ch$ a functor whose values are concentrated in non-negative degree. 
	Then the canonical embedding $\iota_\cat{A} : \cat{A}^\op \to \Proj_k \cat{A}$ induces an equivalence
	\begin{align}
	\lint^{a\in \cat{A}}      F(   \iota_\cat{A}(a) , \iota_\cat{A}(a)    ) \xrightarrow{\  \simeq  \    } \lint^{X\in \Proj_k \cat{A}} F(X,X) \ . \label{mapthmprojective}
	\end{align}
\end{theorem}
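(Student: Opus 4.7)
The plan is to carry out a derived coend-calculus manipulation, combining Lemma~\ref{lemmadualhom}, Fubini (Proposition~\ref{fubinisthmprop}) and the Yoneda Lemma (Proposition~\ref{propyoneda}) in the spirit of the classical co-Yoneda reduction. The hypotheses of these statements are satisfied in our setting: the morphism complexes of $\Proj_k\cat{A}$ are concentrated in degree zero, and the values of $F$ (and hence of $X \mapsto F(\iota_\cat{A}(a),X)$ for fixed $a$) are non-negatively graded by assumption.

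More precisely, Lemma~\ref{lemmadualhom} provides a natural equivalence of $\Proj_k\cat{A}$-bimodules
\begin{align}
\lint^{a\in\cat{A}} \Proj_k\cat{A}(X,\iota_\cat{A}(a)) \otimes F(\iota_\cat{A}(a),Y) \;\xrightarrow{\;\simeq\;}\; F(X,Y)\ .
\end{align}
Taking the derived coend over $X \in \Proj_k\cat{A}$ on the diagonal $Y=X$, invoking homotopy invariance (Proposition~\ref{homotopyinvariancepropo}), and swapping the two integrations by Fubini, I obtain
\begin{align}
\lint^{a\in\cat{A}} \lint^{X\in\Proj_k\cat{A}} \Proj_k\cat{A}(X,\iota_\cat{A}(a)) \otimes F(\iota_\cat{A}(a),X) \;\simeq\; \lint^{X\in\Proj_k\cat{A}} F(X,X)\ .
\end{align}
For fixed $a$, Proposition~\ref{propyoneda} applied to the functor $H := F(\iota_\cat{A}(a),-)\colon \Proj_k\cat{A} \to \Ch$ evaluated at $Y = \iota_\cat{A}(a)$ collapses the inner derived coend to $F(\iota_\cat{A}(a),\iota_\cat{A}(a))$; integrating the resulting equivalence over $a \in \cat{A}$ produces a zig-zag of equivalences between the two sides of~\eqref{mapthmprojective}.

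The step I expect to require the most care is the verification that this zig-zag actually represents the canonical map \eqref{mapthmprojective} induced by functoriality of the derived coend along $\iota_\cat{A}$, rather than some other equivalence between the same two complexes. Both non-trivial maps in the zig-zag are augmentations given by composition: the Lemma~\ref{lemmadualhom} map is induced by composition in $\Proj_k\cat{A}$, while the Yoneda map is induced by evaluation at the identity of $\iota_\cat{A}(a)$. Tracing their composite through the Fubini swap on a bar summand indexed by a tuple $(a_0,\dots,a_n) \in \cat{A}^{n+1}$ shows that $f_1 \otimes \cdots \otimes f_n \otimes \phi$ is sent to the same tensor viewed inside the bar construction for $\lint^{X\in\Proj_k\cat{A}} F(X,X)$ along the objects $\iota_\cat{A}(a_0),\dots,\iota_\cat{A}(a_n)$. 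This is precisely the inclusion induced by $\iota_\cat{A}$ between the two bar constructions, and hence realizes the map \eqref{mapthmprojective}.
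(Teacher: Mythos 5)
Your overall strategy coincides with the paper's: both arguments assemble the square whose sides are the Yoneda equivalence $\alpha$, the equivalence $\beta$ from Lemma~\ref{lemmadualhom}, the Fubini isomorphism $\varphi$, and the map induced by $\iota_\cat{A}$, and both reduce the theorem to the homotopy-commutativity of that square. You also correctly single out the delicate point, namely that the zig-zag must be shown to realize the canonical map \eqref{mapthmprojective} and not merely some abstract equivalence between the two complexes. It is exactly at this point, however, that your argument has a genuine gap.

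The two composites $\iota_\cat{A}\circ\alpha$ and $\beta\circ\varphi$ out of the middle double coend are \emph{not} equal as chain maps, so tracing elements through the Fubini swap cannot show that they agree. A degree-$n$ element of (the diagonal of) the double coend carries \emph{two independent} tuples of objects, $(a_0,\dots,a_n)$ in $\cat{A}$ and $(X_0,\dots,X_n)$ in $\Proj_k\cat{A}$, linked only by a single morphism in $\Proj_k \cat{A}(X_0,\iota_\cat{A}(a_n))$ and the $F$-coefficient. The composite $\iota_\cat{A}\circ\alpha$ collapses the $\Proj_k\cat{A}$-direction of the bar construction and lands in the summand of $B_n F$ indexed by $\big(\iota_\cat{A}(a_0),\dots,\iota_\cat{A}(a_n)\big)$, whereas $\beta\circ\varphi$ collapses the $\cat{A}$-direction and lands in the summand indexed by $(X_0,\dots,X_n)$. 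These are different direct summands of $B_n F$, so the two maps genuinely differ; your computation on ``a bar summand indexed by a tuple $(a_0,\dots,a_n)\in\cat{A}^{n+1}$'' implicitly restricts to elements with $X_i=\iota_\cat{A}(a_i)$ and identity connecting morphism, which do not span the middle complex. What is needed --- and what the paper supplies --- is an explicit simplicial homotopy $\iota_\cat{A}\alpha\simeq\beta\varphi$: after describing the double coend as the realization of the diagonal bisimplicial object via the generalized Eilenberg--Zilber theorem, one constructs interpolating maps $h_j$ landing in the summand of $B_{n+1}F$ indexed by $\big(\iota_\cat{A}(a_0),\dots,\iota_\cat{A}(a_j),X_j,\dots,X_n\big)$ and verifies the simplicial homotopy identities, with $\partial_0 h_0=\beta\varphi$ and $\partial_{n+1}h_n=\iota_\cat{A}\alpha$. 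Without this step (or a substitute for it), your argument only establishes that the two complexes are equivalent, not that the map \eqref{mapthmprojective} itself is an equivalence.
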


\begin{proof}
	The map \eqref{mapthmprojective}
	is the composition of the reversal isomorphism \begin{align}
	\lint^{a\in \cat{A}}      F(   \iota_\cat{A}(a) , \iota_\cat{A}(a)    )\cong \lint^{a\in \cat{A}^\op}      F^\op(   \iota_\cat{A}(a) , \iota_\cat{A}(a)    )\end{align}
	from Remark~\ref{remarkreversal}
	with the map 
	\begin{align}
	\iota_\cat{A}:	\lint^{a\in \cat{A}^\op}      F^\op(   \iota_\cat{A}(a) , \iota_\cat{A}(a)    )\to \lint^{X\in \Proj_k \cat{A}} F(X,X)\label{mapthmprojectiveaux}
	\end{align} induced directly by the embedding $\iota_\cat{A}$. 
	Hence, it suffices to prove that \eqref{mapthmprojectiveaux} is an equivalence. To this end,
	we note that it fits into the square\footnotesize
	\begin{equation*}\label{commutingsquaretransf}
	\begin{tikzcd}
	\displaystyle\lint^{a\in \cat{A}^\op}      F^\op(   \iota_\cat{A}(a) , \iota_\cat{A}(a)    )  \ar{rr}{\iota_\cat{A}}&& \displaystyle\lint^{X \in \Proj_k \cat{A}} F(X,X)   \\
	\displaystyle\lint^{a\in \cat{A}^\op}   \lint^{X \in \Proj_k \cat{A}} \Proj_k \cat{A}(X,\iota_\cat{A}(a)) \otimes F^\op(\iota_\cat{A}(a),X)   \ar{rr}{\varphi }   \ar{u}{\alpha }  && \displaystyle\lint^{X \in \Proj_k \cat{A}   } \lint^{a\in \cat{A}^\op}           \Proj_k \cat{A}(X,\iota_\cat{A}(a)) \otimes F^\op(\iota_\cat{A}(a),X)\ ,     \ar{u}{\beta }
	\end{tikzcd} 
	\end{equation*}	
	\normalsize
	where $\alpha$ is the natural equivalence from the Yoneda Lemma (Proposition~\ref{propyoneda}), $\beta$ is the equivalence from Lemma~\ref{lemmadualhom} (combined with Remark~\ref{remarkreversal}),
	and the isomorphism $\varphi$ is a consequence of the Fubini Theorem (Proposition~\ref{fubinisthmprop}).
	It remains to prove that the square commutes up to homotopy because then we may conclude that \eqref{mapthmprojectiveaux} is an equivalence.
	
	To prove this, we first note that we can see all the 
	complexes in the above square as realizations of simplicial complexes, namely the simplicial bar constructions that we have used to define homotopy coends (the two lower complexes are iterated homotopy coends, hence they are even bisimplicial); moreover, all the maps involved arise as simplicial maps between the simplicial bar constructions. 
Therefore, we may as well exhibit a simplicial homotopy
	$\iota_{\cat{A}} \alpha \simeq \beta \varphi$, see \cite[Definition~8.3.11]{weibel} for the definition. 
	The complex in the left lower corner of the square can be modeled by the total complex associated to a bisimplicial object, 
	and the latter
	 can be described as the realization of the diagonal simplicial object
	 by the generalized Eilenberg-Zilber Theorem of Dold and Puppe
	 \cite[IV.2~Theorem~2.4]{goerssjardine}.
	 Therefore, the needed simplicial homotopy will run from
	 the simplicial chain complex which in degree $n$ is given by a direct sum of 
	 (the reader may ignore the $(*)$-labeled underbraces for the moment)
	\begin{align}\begin{array}{c}
	\cat{A}(a_0,a_1) \otimes \dots \otimes \cat{A}(a_{j-1},a_j) \otimes \underbrace{\cat{A}(a_j,a_{j+1}) \otimes \dots \otimes  \cat{A}(a_{n-1},a_n)}_{(*)}  \\  \otimes \underbrace{\Proj_k \cat{A}(X_1,X_0) \otimes \dots \otimes \Proj_k \cat{A}(X_{j},X_{j-1})}_{(*)} \otimes \Proj_k \cat{A}(  X_{j+1},X_j    ) \otimes \dots \otimes  \Proj_k \cat{A}(X_n,X_{n-1}) \\ \otimes \underbrace{\Proj_k\cat{A} (X_0,\iota_\cat{A}(a_n))}_{(*)} \otimes F(\iota_\cat{A}(a_0) , X_n)\end{array} \label{eqnsummandsimphomotopy}
	\end{align}
	for $a_0,\dots,a_n \in \cat{A}$ and $X_0,\dots,X_n \in \Proj_k \cat{A}$ 
	to $B_* F$ as given in Definition~\ref{defibar}. 
	For every
	$0\le j\le n$, 
	the tensor factors marked by $(*)$
	admit a map to $\Proj_k \cat{A}(X_j,\iota_\cat{A}(a_j))$
	which uses 
	composition in $\cat{A}$ and $\Proj _k \cat{A}$.
	Combining this with the functor $\iota_\cat{A}$, we obtain a map
 $h_j$ from the summand \eqref{eqnsummandsimphomotopy} to the summand 
		\begin{align}
	\Proj_k \cat{A}(\iota_\cat{A}(a_1),\iota_\cat{A}(a_0)) \otimes \dots \otimes \Proj_k \cat{A}(\iota_\cat{A}(a_j),\iota_\cat{A}(a_{j-1})) \otimes \Proj_k \cat{A}(X_j,\iota_\cat{A}(a_j))\\  \otimes \Proj_k \cat{A}(X_{j+1},X_j) \otimes \dots \otimes \Proj_k \cat{A}(X_{n},X_{n-1})\otimes 
	F(\iota_\cat{A}(a_0),X_{n}) \ . 
	\end{align}
	of $B_{n+1} F$ indexed by
	$\iota_\cat{A}(a_0),\dots,\iota_\cat{A} (a_j),X_j,\dots,X_n$. 
	As can be verified by a direct computation,
	these maps yield a simplicial homotopy
	from $\partial_0 h_0 = \beta \varphi$ to $\partial_{n+1} h_n = \iota_\cat{A} \alpha$. 
\end{proof}

As an important application, 
Theorem~\ref{theoprojectiveforbimodules}
yields:

\begin{corollary}\label{corprojectiveforbimodules}
	For any 
	finite-dimensional algebroid $\cat{A}$ over $k$, the canonical map
	\begin{align}
	\lint^{a\in \cat{A}} \cat{A}(a,a) \xrightarrow{\ \simeq \ } \lint^{X\in\Proj_k \cat{A}}           \Proj_k \cat{A}(X,X)
	\end{align} is an equivalence. 
\end{corollary}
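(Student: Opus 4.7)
The plan is to deduce this as an immediate application of Theorem~\ref{theoprojectiveforbimodules}. I would take the functor
\begin{align}
F: \left( \Proj_k \cat{A}\right)^\op \otimes \Proj_k \cat{A} \to \Ch, \qquad F(X,Y) := \Proj_k \cat{A}(X,Y),
\end{align}
i.e.\ the Hom-functor of $\Proj_k \cat{A}$ viewed as a bimodule over itself. Its values are finite-dimensional vector spaces concentrated in degree zero, so in particular concentrated in non-negative degree, and thus $F$ satisfies the hypotheses of Theorem~\ref{theoprojectiveforbimodules}.

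Applying that theorem produces an equivalence
\begin{align}
\lint^{a\in \cat{A}} \Proj_k \cat{A}(\iota_\cat{A}(a),\iota_\cat{A}(a)) \xrightarrow{\ \simeq\ } \lint^{X \in \Proj_k \cat{A}} \Proj_k \cat{A}(X,X).
\end{align}
It then remains to identify the left-hand side with $\lint^{a\in \cat{A}} \cat{A}(a,a)$. Since $\iota_\cat{A}$ sends $a$ to the representable functor $\cat{A}(a,-)$, the ordinary (enriched) Yoneda Lemma for $k$-linear categories provides a natural isomorphism $\Proj_k \cat{A}(\cat{A}(a,-),\cat{A}(a,-)) \cong \cat{A}(a,a)$, and composing with this natural isomorphism under the derived coend yields the desired map.

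There is essentially no obstacle here beyond verifying that the natural isomorphism from the Yoneda Lemma is compatible with the $\cat{A}$-bimodule structure used for the derived coend on the left (so that it induces an equivalence on derived coends via the homotopy invariance Proposition~\ref{homotopyinvariancepropo}); this is routine since the isomorphism is induced by evaluation at identities and is manifestly natural in $a$ in both variables. The composite of this identification with the equivalence provided by Theorem~\ref{theoprojectiveforbimodules} is the claimed canonical map, so the corollary follows.
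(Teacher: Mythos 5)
Your proposal is correct and matches the paper's intent exactly: the paper states this corollary as an immediate application of Theorem~\ref{theoprojectiveforbimodules} with $F$ the hom bimodule of $\Proj_k\cat{A}$, the identification of the left-hand side with $\lint^{a\in\cat{A}}\cat{A}(a,a)$ being exactly the Yoneda/full-faithfulness observation you make (with the variance bookkeeping already absorbed into the reversal isomorphism built into the statement of the theorem).
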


\begin{example}\label{exprojectiveforbimodules}
	If $\cat{A}=\star // A$ for a finite-dimensional $k$-algebra $A$ and if $F: \left( \Proj_k \cat{A}\right)^\op \otimes \Proj_k \cat{A} \to \Ch$ satisfies the hypotheses of Theorem~\ref{theoprojectiveforbimodules}, we have
	$
	CH(A;F(A,A)) \simeq \lint^{X\in \Proj_k A} F(X,X) 
	$, 
	where $F(A,A)$ is the $A$-bimodule that we obtain by evaluation of $F$ on the free $A$-module $A$,
	 and $CH(A;F(A,A))$ are the Hochschild chains of $A$ with coefficients in that bimodule (Example~\ref{exhochschildalgebra}). 
	In particular, we obtain the Agreement Principle from
\cite{mcarthy} and	\cite[Theorem~1.5~(a)]{keller}
	that the Hochschild homology of $A$ and the Hochschild
	 homology of the $k$-linear category of finite-dimensional projective $A$-modules are isomorphic. 
\end{example}

\begin{remark}[Generalization]\label{remgendercoend}
	Above, we have defined and investigated homotopy coends of functors going from $\cat{C}^\op \otimes \cat{C}$ for some differential graded category $\cat{C}$ to chain complexes $\Ch$ over a field $k$. In fact, we could have also used chain complexes of modules over an algebra over $k$ instead
	of $\Ch$.
	Let us sketch this generalization: For a functor $F: \cat{C}^\op \otimes \cat{C} \to \ChA$ to chain complexes of modules over some $k$-algebra $R$, we can consider the functor $\cof F : \cat{C}^\op \otimes \cat{C} \to \ChA$ obtained by replacing $F$ cofibrantly pointwise (here we fix again the projective model structure on $\ChA$). Using the tensoring of $\ChA$ over $\Ch$,
	we now define the bar construction $B_* \cof F$ by precisely the same formulae as in Definition~\ref{defibar}. Its realization
	\begin{align}
	\lint^{X \in \cat{C}} F(X,X) := | B_* \cof F|
	\end{align}
	will be referred to as the \emph{homotopy coend of $F$}.
	Having replaced $F$ pointwise will ensure that  $B_* \cof F$ is cofibrant in each level, which implies that $B_* \cof F$ is Reedy cofibrant. % But a simplicial chain complex $X$ of $A$-modules which is levelwise cofibrant is Reedy-cofibrant: As in the proof of Lemma~\ref{lemmonocof} we have $X_n = L_n X \oplus N_n X$. Since $X_n$ is cofibrant, so are $L_n X$ and $N_n X$ making the $n$-th latching map $L_n X \to X_n$ a dimension-wise split  injection with cofibrant cokernel. By \cite[Lemma~2.3.9]{Hovey} the $n$-th latching map is therefore a cofibration. This proves that $X$ is Reedy cofibrant.  
	As a consequence, the proof of homotopy invariance goes through. This allows us to prove  the generalization of the Yoneda Lemma, the Fubini Theorem and the Agreement Principle.
	
\end{remark}

% proof_realization_and_normalized_chains.pdf

\section{The Hochschild complex of a finite tensor category}
In this section, we investigate the Hochschild complex of a finite tensor category. 
By means of a resolution of the canonical coend of a finite tensor category,
 we express the Hochschild complex in terms of derived class functions (Section~\ref{sectracesclass}). This will turn out to be the key to a topological interpretation of the Hochschild complex in Section~\ref{secmcg}, where we construct a homotopy coherent action of the torus mapping class group.
 In Sections~\ref{secdgva} and~\ref{secequiv}, we discuss the multiplicative structure on the Hochschild complex of a  braided finite  tensor category.\\

We start by giving definitions and recalling standard terminology:
Based on the comparison between homotopy coends and ordinary Hochschild homology in Section~\ref{secderivedcoendsproj},  the following Definition makes sense: 

\begin{definition}\label{defhochschildcomplex}
	For a $k$-linear category $\cat{C}$,
	 we call the differential graded vector space
	\begin{align}
	\lint^{X\in \Proj \cat{C}} \cat{C}(X,X) \ 
	\end{align} the \emph{Hochschild complex of $\cat{C}$}.
	\end{definition}

	This definition reduces to the standard definition of Hochschild homology of a differential graded (here: just linear) category as appearing in \cite{keller,cibils}. Sometimes it is also referred to as a \emph{derived trace}. Let us emphasize, however, that in the above definition the homotopy coend only runs over the \emph{projective objects}.
	
	In \cite{shimizu} a version of Hochschild cohomology of a finite Abelian linear category is proposed using the category of right exact endofunctors. Hochschild homology is then defined indirectly using the Nakayama functor and a dualization. Due to the strong finiteness conditions in \cite{shimizu},
	 all these definitions are equivalent on their common domain of definition.

Consider 
Definition~\ref{defhochschildcomplex} 
in the case that 
$\cat{C}$ is a
\emph{finite category}, i.e.\
a linear Abelian category 
with finite-dimensional morphism spaces, enough projectives, and finitely many isomorphism classes of simple objects subject to the condition that every object has finite length. A linear category is finite if and only if it is linearly equivalent to the category of finite-dimensional modules over a finite-dimensional algebra, see 
e.g.\  \cite[Proposition~1.4]{dss}. 
If $\cat{C}$ is finite, i.e.\
given by finite-dimensional modules over some finite-dimensional algebra $A$, then $\lint^{X\in \Proj \cat{C}} \cat{C}(X,X)$ is just equivalent to the ordinary Hochschild chains on $A$ with coefficients in the $A$-bimodule $A$.

Since the categories we are interested in will all be of that type, one might ask why it is necessary to consider the complex $\lint^{X\in \Proj \cat{C}} \cat{C}(X,X)$ when it is just equivalent to the Hochschild complex of some algebra. The answer is that just knowing that $\lint^{X\in \Proj \cat{C}} \cat{C}(X,X)$ is equivalent to the Hochschild complex of \emph{some} algebra is often not very helpful because this presentation in terms of an algebra might be non-canonical; in some sense it corresponds to a choice of coordinates.
 As a consequence, constructions performed on $\lint^{X\in \Proj \cat{C}} \cat{C}(X,X)$ might not have a direct counterpart for the Hochschild complex of the randomly chosen algebra (in other cases they might have, but those constructions might be a lot more complicated). This is especially problematic when the category has more structure (tensor product, braiding, ribbon twist). This additional structure might not be reflected on the algebra.
Since the main goal of this article is to investigate the additional structure which is present on $	\lint^{X\in \Proj \cat{C}} \cat{C}(X,X)$ when $\cat{C}$ is a monoidal category or braided monoidal category (possibly with more structure or properties), using Definition~\ref{defhochschildcomplex} is justified.

Before proceeding let us recall some standard notions from the theory of linear monoidal categories:
A \emph{$k$-linear monoidal category} is a monoidal category with $k$-linear monoidal product.
In a \emph{rigid} $k$-linear monoidal category every object $X \in \cat{C}$ has a left dual $X^\vee$ and a right dual ${^\vee \! X}$. These give us the natural adjunction isomorphisms
\begin{align}
\cat{C}(X\otimes Y,Z)&\cong \cat{C}(X,Z\otimes Y^\vee) \ , \\
\cat{C}(Y^\vee\otimes X,Z)&\cong \cat{C}(X,Y\otimes Z) \ , \\
\cat{C}(X\otimes {^\vee \! Y}, Z) &\cong \cat{C}(X,Z\otimes Y)\ ,\\
\cat{C}(Y\otimes X,Z)&\cong \cat{C}(X,{^\vee \! Y} \otimes Z)  
\end{align}
for $X,Y,Z\in\cat{C}$ (we are following here the conventions of \cite{egno}). 
A $k$-linear Abelian rigid monoidal category with simple unit will be referred to as a \emph{tensor category}.  
A \emph{finite tensor category} \cite{etinghofostrik} is a tensor category which is also finite as a linear category.
Such a category has the important property that $P\otimes X$ and $X\otimes P$ are projective for $P \in \Proj \cat{C}$ and $X\in\cat{C}$ and that the tensor product is exact in both arguments. Furthermore, it is self-injective, i.e.\ the projective objects are precisely the injective ones.

\subsection{Hochschild complex of Drinfeld doubles in finite characteristic}
Before investigating the properties of the homotopy coend $	\lint^{X\in \Proj \cat{C}} \cat{C}(X,X)$  in general, it is certainly instructive to look at a certain class of finite tensor categories 
which allow us to perform some concrete computations, namely \emph{Drinfeld doubles}.
Recall from e.g.\ \cite[Chapter~IX]{kassel} that for a finite group $G$ the Drinfeld double $D(G)$
is a ribbon factorizable Hopf algebra whose underlying vector space is $k(G) \otimes k[G]$. Here we denote by $k(G)$ the commutative algebra of $k$-valued functions on $G$; a basis will be given by the functions $(\delta_g)_{g\in G}$ supported in a single group element. Moreover, we denote by $k[G]$ the group algebra.
Now the multiplication of $D(G)$ is given by
\begin{align}
(\delta_a \otimes b)(\delta_c \otimes d) = \delta_a \delta_{bcb^{-1}} \otimes bd \quad \text{for all}\quad a,b,c,d\in G \ . 
\end{align}
Modules over $D(G)$ can be equivalently written as Yetter-Drinfeld modules over $k[G]$, see \cite[Theorem~IX.5.2]{kassel}, and hence as modules over the action groupoid $G//G$ of $G$ acting on itself by conjugation;
\begin{align}
\Mod_k D(G) \simeq \Mod_k (G//G) \ . \label{yetterdrinfeldmodeqn}
\end{align}
The groupoid $G//G$ is equivalent to the groupoid $\PBun_G(\mathbb{S}^1)$ of principal $G$-bundles over the circle, and in fact this observation 
is the basis for the description of Dijkgraaf-Witten theory \cite{dijkgraafwitten,freedquinn} in terms of principal $G$-bundles.
We will use \eqref{yetterdrinfeldmodeqn} to give a topological interpretation to $\lint^{X \in \Proj_k D(G)} \Hom_{D(G)}(X,X)$.

To this end, we introduce some notation: For a groupoid $\Gamma$, 
denote by
$\Lambda \Gamma$
	its \emph{loop groupoid}, i.e.\ the groupoid $\Gamma^{\Pi(\mathbb{S}^1)}$ of functors from the fundamental groupoid $\Pi (\mathbb{S}^1)$ of the circle $\mathbb{S}^1$ to $\Gamma$.
	For a group $G$ and the groupoid $BG$ with one object
	and automorphism group $G$, 
	the loop groupoid $\Lambda BG$ is equivalent to the groupoid of principal $G$-bundles over the circle,
	\begin{align}
	\Lambda BG = BG^{\Pi(\mathbb{S}^1)} \simeq \PBun_G(\mathbb{S}^1) 
	\end{align} by the holonomy classification of principal $G$-bundles, 
	and hence equivalent to the action groupoid $G//G$. 
	A similar computation shows
	$
	\Lambda^n BG \simeq \PBun_G(\mathbb{T}^n)
$ and in particular
	\begin{align}
	\Lambda (G//G) \simeq \PBun_G(\mathbb{T}^2)\ . 
	\label{loopgroupoidgadj}
	\end{align}

The chains on the loop groupoid $\Lambda \Gamma$ of any groupoid $\Gamma$ are equivalent to the Hochschild chains of the free $k$-linear category $k[\Gamma]$ on $\Gamma$. This can be seen as a groupoid version of the classical result \cite[Corollary~9.7.5]{weibel}.

\begin{lemma}\label{lemmaloopgroupoidHH}
	For any groupoid $\Gamma$,
	 there is an equivalence
	\begin{align}
	\lint^{x\in \Gamma } k[ \Gamma   ](x,x)    \simeq  N_*  (  \Lambda \Gamma ;k         )   \ . 
	\end{align}
\end{lemma}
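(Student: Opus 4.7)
The plan is to identify $B_*\, k[\Gamma](-,-)$, as a simplicial object in $\Ch$, with the free $k$-linearization of the nerve of the loop groupoid $\Lambda\Gamma$. Once this is done, Lemma~\ref{lemmahocolim} combined with the standard identification between the normalized chains of a simplicial set and the homotopy colimit over $\Delta^\op$ of its $k$-linearization yields
\begin{align}
\lint^{x\in \Gamma } k[\Gamma](x,x) \simeq \hocolimsub{n \in \Delta^\op} B_n\, k[\Gamma](-,-) \cong \hocolimsub{n \in \Delta^\op} k[(N \Lambda \Gamma)_n] \simeq N_*(\Lambda \Gamma ; k) \ .
\end{align}

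To produce this simplicial isomorphism, I would first observe that in each degree $n$ the bar complex is concentrated in degree zero and free on the set $Y_n$ of tuples $(x_0,\dots,x_n;\, f_1,\dots,f_n;\, g)$ with $f_i : x_i \to x_{i-1}$ and $g : x_0 \to x_n$ in $\Gamma$, the simplicial structure maps being $k$-linearizations of evident operations on $Y_\bullet$. Next I would define a bijection $Y_n \xrightarrow{\ \cong\ } (N\Lambda \Gamma)_n$ sending $(x_\bullet, f_\bullet, g)$ to the $n$-simplex of $\Lambda \Gamma$ whose base vertex is $(x_0, h_0)$ with $h_0 := f_1 f_2 \cdots f_n g \in \Aut(x_0)$ and whose connecting morphisms $\phi_i : (x_{i-1}, h_{i-1}) \to (x_i, h_i)$ are $\phi_i := f_i^{-1}$, the remaining loops $h_i = \phi_i h_{i-1} \phi_i^{-1}$ being forced by the compatibility condition in $\Lambda \Gamma$. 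The inverse sends $(y_\bullet, \phi_\bullet, h_0)$ to $(x_i := y_i,\, f_i := \phi_i^{-1},\, g := \phi_n \cdots \phi_1 h_0)$; here I crucially use that $\Gamma$ is a groupoid.

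The simplicial compatibility is then a direct check. For instance, $\partial_0$ on $B_*$ sends $(x_\bullet, f_\bullet, g)$ to $(x_1,\dots,x_n, f_2,\dots,f_n, g f_1)$ by composing $f_1$ into $g$, while the zeroth face on $N\Lambda\Gamma$ rebases the loop by conjugation to $h_1 = f_1^{-1}(f_1\cdots f_n g)f_1 = f_2\cdots f_n g f_1$, matching $f_2\cdots f_n\cdot (g f_1)$. For an interior face $0<j<n$, the replacement of $f_j \otimes f_{j+1}$ by $f_j f_{j+1}$ in $B_*$ is mirrored by $\phi_{j+1}\phi_j = f_{j+1}^{-1} f_j^{-1} = (f_j f_{j+1})^{-1}$ on the nerve side; the face $\partial_n$ and the degeneracies (inserting identities) are analogous. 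The main obstacle is not conceptual but bookkeeping: keeping track of the conventions on arrow directions in the bar construction versus in $\Lambda\Gamma$, and of the placement of the distinguished loop $h_0$, so that all the simplicial identities line up. Once the bijection above is fixed, everything reduces to routine verification.
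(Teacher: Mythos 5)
Your proof is correct and follows essentially the same route as the paper: both construct an explicit isomorphism of simplicial vector spaces between the bar construction of $k[\Gamma](-,-)$ and the linearization of the nerve of $\Lambda\Gamma$ by packaging the cyclic word of morphisms into a based loop together with the chain of rebasing morphisms, and then pass to (normalized) chains. The only differences are notational — the paper first applies the reversal isomorphism of Remark~\ref{remarkreversal} so that the morphisms run forward and then closes the loop with $(\varphi_{n-1}\cdots\varphi_0)^{-1}\alpha_n$, whereas you keep the original bar-construction conventions and invert the $f_i$ instead — and your bookkeeping for the face maps checks out.
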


\begin{proof}
	Up to equivalence, we can describe $\Pi(\mathbb{S}^1)$ as the groupoid $\star // \mathbb{Z}$ with one object and automorphism group $\mathbb{Z}$.
	As an abbreviation, we will write $S_n$ for the space of $n$-simplices of the simplicial bar construction of $k[\Gamma](-,-)$. Taking Remark~\ref{remarkreversal} into account we can write
	\begin{align} S_n = \bigoplus_{x_0,\dots,x_n \in \Gamma} k[\Gamma(x_0,x_1) ] \otimes \dots \otimes k[\Gamma(x_n,x_0)] \ . 
	\end{align}
	A string of $n$ morphisms in $\Lambda \Gamma$ is a commutative diagram
	\begin{equation}
	\begin{tikzcd}
	x_0 \ar{rr}{\varphi_0} \ar{d}{\alpha_0}&& x_1 \ar{d}{\alpha_1} \ar{rr}{\varphi_1}  && \dots \ar{rr}{\varphi_{n-1}} && x_n \ar{d}{\alpha_n} \\
	x_0 \ar{rr}{\varphi_0} && x_1  \ar{rr}{\varphi_1}  && \dots \ar{rr}{\varphi_{n-1}} && x_n \ . 
	\end{tikzcd}\end{equation} 
	Sending this string to the loop
	\begin{align}
	x_0 \xrightarrow{  \ \varphi_0 \  } x_1 \to \dots \to x_n \xrightarrow{  \   (\varphi_{n-1}\dots\varphi_0)^{-1}    \alpha_n\   } x_0 \in S_n \ 
	\end{align}
	yields an isomorphism from the free simplicial vector space $k[B\Lambda \Gamma]$ on the nerve $B\Lambda \Gamma$ of the loop groupoid of $\Gamma$ to $S_*$. By taking normalized chains, the claim follows.
\end{proof}

%	For any $k$-linear category $\cat{A}$ we denote by $\Proj_k \cat{A}$ the \emph{$k$-linear category of projective representations of $\cat{A}$} on finite-dimensional $k$-vector spaces. Whenever $\cat{C} = \star // A$ is a category with one object whose endomorphism ring is given by a finite-dimensional $k$-algebra $A$ we write $\Proj_k A$ instead of $\Proj_k (\star //A)$. Note that $\Proj_k A$ is just the category finite-dimensional projective $A$-modules (here by \emph{module} we always mean a left module; whenever we need right modules, we will specify this explicitly). The morphism spaces in $\Proj_A$ will be denoted by $\Hom_A(-,-)$. Note that any $a \in A$ provides a map $A \to A$ by right multiplications which obviously intertwines with the left multiplication. This yields a map $A \to \Hom_A(A,A)$ and hence a canonical functor
%	\begin{align}
%	\star //A \to \Proj_k A \ .\end{align}

\begin{proposition}\label{propodrinfeldmcgaction}
	Let $G$ be a finite group. Then there is an equivalence
	of differential graded vector spaces
	\begin{align}	\lint^{X \in \Proj_k D(G)} \Hom_{D(G)}(X,X) \simeq N_*(\PBun_G(\mathbb{T}^2);k)  \ . \label{eqnodrinfeldmcgaction}
	\end{align}
\end{proposition}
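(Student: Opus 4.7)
The plan is to chain together the three pieces of machinery assembled just before the statement: the Yetter–Drinfeld identification \eqref{yetterdrinfeldmodeqn}, the Agreement Principle (Corollary~\ref{corprojectiveforbimodules}), Lemma~\ref{lemmaloopgroupoidHH}, and finally the loop-groupoid description \eqref{loopgroupoidgadj}. Since $G$ is finite, each step takes place in the world of finite-dimensional algebroids, so everything is well-defined.

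First, I would replace $\Proj_k D(G)$ by the category of projective modules over the $k$-linearization of the action groupoid. By \eqref{yetterdrinfeldmodeqn} we have $\Mod_k D(G) \simeq \Mod_k (G//G)$, and this equivalence of $k$-linear Abelian categories restricts to an equivalence of the full subcategories of projective objects, $\Proj_k D(G) \simeq \Proj_k k[G//G]$, intertwining the canonical hom bimodules. Hence
\begin{align}
\lint^{X \in \Proj_k D(G)} \Hom_{D(G)}(X,X) \simeq \lint^{X \in \Proj_k k[G//G]} \Proj_k k[G//G](X,X)\ .
\end{align}
Next, because $G$ is finite, the $k$-linear category $k[G//G]$ is a finite-dimensional algebroid, so Corollary~\ref{corprojectiveforbimodules} applies and gives
\begin{align}
\lint^{X \in \Proj_k k[G//G]} \Proj_k k[G//G](X,X) \simeq \lint^{x \in G//G} k[G//G](x,x)\ .
\end{align}
Now Lemma~\ref{lemmaloopgroupoidHH}, applied to the groupoid $\Gamma = G//G$, identifies the right-hand side with the normalized chains $N_*(\Lambda(G//G);k)$ on the loop groupoid. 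Finally, \eqref{loopgroupoidgadj} provides the equivalence of groupoids $\Lambda(G//G) \simeq \PBun_G(\mathbb{T}^2)$, which, upon taking nerves and normalized chains, yields the claimed equivalence.

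I do not expect a substantial obstacle: the proof is essentially an assembly of results stated earlier in the paper. The one point requiring a little care is checking that the categorical equivalence $\Mod_k D(G) \simeq \Mod_k(G//G)$ matches the two hom-bimodules appearing on the two sides (so that homotopy invariance, Proposition~\ref{homotopyinvariancepropo}, can be invoked), but this is immediate since any $k$-linear equivalence of categories identifies the corresponding hom functors up to natural isomorphism. The rest is plugging chains together.
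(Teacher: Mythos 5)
Your proposal is correct and follows essentially the same route as the paper's own proof: identify $\Mod_k D(G)$ with $\Mod_k(G//G)$ via the Yetter--Drinfeld description, apply Corollary~\ref{corprojectiveforbimodules} to the finite-dimensional algebroid $k[G//G]$, and then conclude with Lemma~\ref{lemmaloopgroupoidHH} and \eqref{loopgroupoidgadj}. The only difference is that you spell out the restriction to projective subcategories and the matching of hom bimodules slightly more explicitly, which the paper leaves implicit.
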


\begin{proof}
	The $k$-linear categories of finite-dimensional representations of $D(G)$ and $G//G$ are 
	equivalent by \eqref{yetterdrinfeldmodeqn}, and hence so are their Hochschild chains. If we apply Corollary~\ref{corprojectiveforbimodules}
	to the free $k$-linear category  $k[G//G]$, we arrive at 
	\begin{align}
	\lint^{X \in \Proj_k D(G)}  \Hom_{D(G)} (X,X) \simeq  	\lint^{g\in G//G } k[ G//G   ](g,g) \ . 
	\end{align} Now we use Lemma~\ref{lemmaloopgroupoidHH} and \eqref{loopgroupoidgadj}. 
\end{proof}

The left hand side of \eqref{eqnodrinfeldmcgaction}
is also equivalent to the ordinary Hochschild chains on $D(G)$ with coefficients in $D(G)$ seen as bimodule over itself (Example~\ref{exprojectiveforbimodules}).
By transporting the geometric mapping class group action from $N_*(\PBun_G(\mathbb{T}^2);k)$ to the left hand side we obtain:

\begin{corollary}
For any finite group $G$, the Hochschild chains of the Drinfeld double $D(G)$ carry a homotopy coherent $\SL(2,\mathbb{Z})$-action.
\end{corollary}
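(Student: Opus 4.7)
The plan is to exploit Proposition~\ref{propodrinfeldmcgaction}, which supplies a zig-zag of equivalences
\[
\lint^{X\in\Proj_k D(G)}\Hom_{D(G)}(X,X)\simeq N_*(\PBun_G(\mathbb{T}^2);k) \, ,
\]
and thereby reduces the problem to producing a homotopy coherent $\SL(2,\mathbb{Z})$-action on the right hand side and then transporting it along the equivalence.

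To construct the action on $N_*(\PBun_G(\mathbb{T}^2);k)$, I would invoke the identification $\SL(2,\mathbb{Z})=\mathrm{MCG}(\mathbb{T}^2)$: the mapping class group acts on the groupoid $\PBun_G(\mathbb{T}^2)$ by pullback of $G$-bundles along representative diffeomorphisms of the torus. Under the holonomy equivalence $\PBun_G(\mathbb{T}^2)\simeq \{(g,h)\in G\times G \mid gh=hg\}//G$ this becomes the classical action generated by $S\cdot(g,h)=(h^{-1},g)$ and $T\cdot(g,h)=(g,hg)$, both of which preserve the commutation relation and intertwine with the simultaneous conjugation of $G$. On this combinatorial model the formulas define a strict group homomorphism $\SL(2,\mathbb{Z})\to\Aut(\PBun_G(\mathbb{T}^2))$; taking the nerve and then applying normalised chains — a symmetric monoidal, homotopy invariant functor — yields a strict, hence a fortiori homotopy coherent, $\SL(2,\mathbb{Z})$-action on $N_*(\PBun_G(\mathbb{T}^2);k)$.

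Finally, I would transport this action along the zig-zag in Proposition~\ref{propodrinfeldmcgaction} to obtain the desired homotopy coherent action on $\lint^{X\in\Proj_k D(G)}\Hom_{D(G)}(X,X)$, which by Example~\ref{exprojectiveforbimodules} is canonically equivalent to the Hochschild chains of $D(G)$. The main obstacle is to verify that every step in the proof of Proposition~\ref{propodrinfeldmcgaction} — namely the Morita equivalence \eqref{yetterdrinfeldmodeqn}, the Agreement Principle (Corollary~\ref{corprojectiveforbimodules}), Lemma~\ref{lemmaloopgroupoidHH} and the holonomy identification \eqref{loopgroupoidgadj} — is natural for automorphisms of the groupoid $G//G$ and of its loop groupoid. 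Each of these is functorially defined from the underlying groupoid, so naturality is built in; once it is confirmed, standard transport of structure in the $\infty$-category of chain complexes lets any strict group action on one side of an equivariant zig-zag be pushed across to a homotopy coherent action on the other side, completing the argument.
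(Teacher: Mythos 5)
Your proposal is correct and follows exactly the route the paper takes: the paper's entire proof is the one-line remark that one transports the geometric mapping class group action on $N_*(\PBun_G(\mathbb{T}^2);k)$ across the equivalence of Proposition~\ref{propodrinfeldmcgaction}. Your additional details (the explicit holonomy formulas for $S$ and $T$, strictness of the action on the combinatorial model, and the naturality of each step of the zig-zag) are a faithful elaboration of what the paper leaves implicit.
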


The category of modules over a Drinfeld double is a very tractable example of a modular category 
(the definition of a modular category will be recalled at the beginning of Section~\ref{secmcg}).
It is non-semisimple if and only if the characteristic of $k$ divides $|G|$. The above result establishes a homotopy coherent mapping class group action on its Hochschild complex. As one of the main results of this article (Theorem~\ref{thmsl2z}), we generalize this to arbitrary modular categories. In the general case, 
a geometric argument as in Proposition~\ref{propodrinfeldmcgaction} will not be available.

\subsection{Traces, class functions and the Lyubashenko coend\label{sectracesclass}}
If we are given a finite tensor category $\cat{C}$ and consider its Hochschild complex
$\lint^{X\in \Proj \cat{C}} \cat{C}(X,X)$, then the tensor structure is of course not needed to define the complex itself. However, it leads to simplifications when trying to compute the Hochschild homology.
The idea is to express a (derived) trace (i.e.\ a (homotopy) coend of some sort) via a (derived) space of class functions.

Before making this idea precise below in the case of interest to us, we explain in more detail a related instance where it appears in a different form:
Let $\cat{C},\cat{D}$ and $\cat{E}$ be finite categories over $k$ and $F:\cat{D} \otimes \cat{C}^\op \otimes \cat{C}\to\cat{E}$ a  linear functor. Of course, 
if $\cat{E}$ has sufficiently many colimits, we may consider the coend $\int^{X \in \cat{C}} F(-,X,X)$ to obtain a functor $\cat{D} \to \cat{E}$. However, if $F$ is left-exact, we might want to consider a coend of functors such that the result is again left-exact and such that the universality of the coend holds with respect to left-exact functors. Such requirements arise in conformal field theory for the gluing of conformal blocks. Motivated by this problem, a coend 
$\oint^{X \in \cat{C}} F(-,X,X)$ with values 
in left-exact functors was studied in \cite{lubalex}, see also \cite{fscoend} for a review and the relation to conformal field theory.
It is a key insight that the coend in left-exact functors can be represented by a canonical object in the following way:
Let $\cat{C}$ be a finite tensor category. Then one may define the coend
\begin{align}
\mathbb{F}:=\int^{X \in \cat{C}} X \otimes X^\vee
\end{align}
which is called the \emph{canonical coend of $\cat{C}$} or also the \emph{Lyubashenko coend} due to its appearance in \cite{lubacmp,luba,kl}. 
By \cite[Section~8.2]{lubalex} we find
\begin{align}
\oint^{X \in \cat{C}} \cat{C}(X,-\otimes X) \cong \cat{C}(I,-\otimes \mathbb{F}) \ ,
\end{align}
i.e.\ the coend of 
the morphism space functor computed in the category of left-exact functors (which is just a type of trace) can be written as the space of morphisms from the monoidal unit to some special object $\mathbb{F}$. If $\cat{C}$ arises as finite-dimensional modules over a finite-dimensional Hopf algebra, then $\mathbb{F}$ is the coadjoint representation. For this reason, $\cat{C}(I,\mathbb{F})$ should be thought of as a generalized space of class functions. 
In summary, we see an instance where a trace is expressed as a space of class functions.
We should note that the object $\mathbb{F}$ is not only interesting because it provides a description of certain coends in left-exact functors. It also turns out to be the key ingredient for the construction of the mapping class group actions in \cite{lubacmp,luba}, see also Section~\ref{secmcg}.

This suggests the question whether we can describe for a finite tensor category $\cat{C}$ in an analogous way the derived trace  $\lint^{X\in \Proj \cat{C}} \cat{C}(X,X)$, i.e.\ the Hochschild complex, as a derived space of class functions using some special differential graded object of $\cat{C}$; and if so, whether it is related to the Lyubashenko coend. In order to answer these questions, we consider for any functor $F: \cat{C}^\op \otimes \cat{C} \to \cat{C}$  the homotopy coend
$
\lint^{X \in \Proj \cat{C}} F(X,X) 
$
by means of the generalizations given in Remark~\ref{remgendercoend}. 
Strictly speaking, we cannot see this (as we would like) as a differential graded object in $\cat{C}$ because $\cat{C}$ does not have infinite coproducts, but the definition of the homotopy coend involves coproducts over \emph{all} projective objects.
Fortunately, we know that up to equivalence we can write $\lint^{X \in \Proj \cat{C}} F(X,X)$ using some finite collection of projective objects. By the Agreement Principle, even one suitably chosen projective module will suffice. We will denote such a `finite version' of $\lint^{X \in \Proj \cat{C}} F(X,X)$
by
\begin{align}
\flint^{X \in \Proj \cat{C}} F(X,X) \ .  \label{eqngendercoendf}
\end{align}
This is now a differential graded object in $\cat{C}$ concentrated in non-negative degree.
Up to equivalence, this object is independent of \emph{how} we  make the homotopy coend finite, i.e.\ two `finite versions' are related by a canonical zig-zag of equivalences.
The computation of \eqref{eqngendercoendf} simplifies when $F$ sends pairs of projective objects to projective objects because then the pointwise cofibrant replacement for $F$ is not necessary (see Remark~\ref{remgendercoend}). In that case, \eqref{eqngendercoendf} is level-wise projective.	

Using such finite homotopy coends in $\cat{C}$ we are able to express $\lint^{X \in \Proj \cat{C}}  \cat{C}(G(X),X)$ for any endofunctor $G$ of $\cat{C}$ as a morphism space from the monoidal unit to some object:

\begin{theorem}\label{thmobHH}
	Let $\cat{C}$ be a finite tensor category.
	Then for any linear functor $G:\cat{C} \to \cat{C}$ there is a canonical equivalence
	\begin{align}
	\lint^{X \in \Proj \cat{C}}  \cat{C}(G(X),X) \simeq \cat{C}\left(I, \flint ^{X \in \Proj \cat{C}} X \otimes G(X)^\vee  \right)
	\end{align}
\end{theorem}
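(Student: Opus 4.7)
The plan is to deduce the equivalence from the pointwise rigidity adjunction $\cat{C}(G(X),X) \cong \cat{C}(I, X \otimes G(X)^\vee)$, which follows from the duality adjunction $\cat{C}(I \otimes G(X), X) \cong \cat{C}(I, X \otimes G(X)^\vee)$ listed in the paper, combined with $I \otimes G(X) \cong G(X)$. The equivalence at the level of derived coends is then obtained by showing that $\cat{C}(I,-)$ commutes with the derived coend on the right-hand side, provided the latter is first reduced to a finite form via the Agreement Principle.

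Since $\cat{C}$ is a finite tensor category, I would choose a finite-dimensional algebroid $\cat{A}$ over $k$ with $\Mod_k \cat{A} \simeq \cat{C}$, so that $\Proj_k \cat{A}$ and $\Proj \cat{C}$ are identified. Applying the Agreement Principle (Theorem~\ref{theoprojectiveforbimodules}) to the $\Ch$-valued functor $(X,Y) \mapsto \cat{C}(G(X),Y)$ gives
\[
\lint^{X \in \Proj \cat{C}} \cat{C}(G(X),X) \ \simeq \ \lint^{a \in \cat{A}} \cat{C}\bigl(G(\iota_\cat{A}(a)), \iota_\cat{A}(a)\bigr)\ .
\]
By the construction of the finite variant of the derived coend valued in $\cat{C}$ (cf.\ Remark~\ref{remgendercoend}), we may simultaneously realize $\flint^{X \in \Proj \cat{C}} X \otimes G(X)^\vee$ as $\lint^{a \in \cat{A}} \iota_\cat{A}(a) \otimes G(\iota_\cat{A}(a))^\vee$, now a chain complex in $\cat{C}$. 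No pointwise cofibrant replacement is needed here, since in a finite tensor category tensoring with any object preserves projectivity, so the bar construction is already levelwise projective. The pointwise rigidity adjunction identifies the integrand of the Agreement-reduced LHS with $\cat{C}\bigl(I, \iota_\cat{A}(a) \otimes G(\iota_\cat{A}(a))^\vee\bigr)$.

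It remains to commute $\cat{C}(I,-)$ past the finite derived coend in $\cat{C}$. Each level of the bar construction for $\lint^{a \in \cat{A}} \iota_\cat{A}(a) \otimes G(\iota_\cat{A}(a))^\vee$ is a \emph{finite} direct sum (indexed by tuples of objects of $\cat{A}$) of tensor products $V \otimes X$ with $V$ a finite-dimensional vector space, arising as a tensor product of $\cat{A}$-morphism spaces, and $X \in \cat{C}$ projective. Since $\cat{C}(I,-)$ is $k$-linear, it commutes with finite direct sums and with tensoring by finite-dimensional vector spaces via $\cat{C}(I, V \otimes X) \cong V \otimes \cat{C}(I,X)$; the realization then only introduces further tensoring by the finite-dimensional chain complexes $N_*(\Delta^n;k)$, so $\cat{C}(I,-)$ commutes with the realization as well. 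Applying $\cat{C}(I,-)$ to the bar construction for the RHS therefore produces level-by-level the bar construction for the Agreement-reduced LHS, which yields the desired equivalence. The main obstacle is exactly this commutation step: without first restricting to the finite algebroid $\cat{A}$, the bar construction in $\cat{C}$ would involve coproducts indexed by all objects of $\Proj \cat{C}$, which need not exist in $\cat{C}$; the Agreement Principle is what simultaneously makes the coend finite in $\cat{C}$ and legitimizes passing the morphism-space functor through the bar construction and its realization.
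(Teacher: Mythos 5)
Your proposal is correct and follows essentially the same route as the paper's proof: pointwise duality to rewrite $\cat{C}(G(X),X)$ as $\cat{C}(I,X\otimes G(X)^\vee)$, the Agreement Principle to reduce to a finite derived coend, the observation that no cofibrant replacement is needed since $X\otimes G(X)^\vee$ is projective for projective $X$, and finally commuting $\cat{C}(I,-)$ past the resulting finite direct sums. Your write-up is somewhat more explicit about the last commutation step (including the tensoring with $N_*(\Delta^n;k)$ in the realization), but the argument is the same.
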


\begin{proof}
	By duality and the Agreement Principle we find
	\begin{align}
	\lint^{X \in \Proj \cat{C}}  \cat{C}(G(X),X) \cong \lint^{X \in \Proj \cat{C}}  \cat{C}(I,X\otimes G(X)^\vee) \simeq \flint^{X \in \Proj \cat{C}}  \cat{C}(I,X\otimes G(X)^\vee) \ . \label{homraus1eqn}
	\end{align} 
	We want to compare this with 
	$\cat{C}\left(I, \flint ^{X \in \Proj \cat{C}} X \otimes G(X)^\vee  \right)$, where 
	the object $\flint ^{X \in \Proj \cat{C}} X \otimes G(X)^\vee $ is a special case of the construction
	\eqref{eqngendercoendf}. The point-wise cofibrant replacement that would usually be involved in the definition of this homotopy coend may be omitted 
	because $X \otimes G(X)^\vee$ is projective whenever $X$ is.
	Since $\flint^{X \in \Proj \cat{C}}  \cat{C}(I,X\otimes G(X)^\vee)$ is defined using finite direct sums which are preserved by the hom functor, we now find $\flint^{X \in \Proj \cat{C}}  \cat{C}(I,X\otimes G(X)^\vee) \cong \cat{C}\left(I, \flint ^{X \in \Proj \cat{C}} X \otimes G(X)^\vee  \right)$ which combined with \eqref{homraus1eqn} yields the assertion.
\end{proof}
% proof_derived_trace.pdf

This statement is not very useful unless we can understand the differential graded object $\flint ^{X \in \Proj \cat{C}} X \otimes G(X)^\vee $ in $\cat{C}$. To this end, we note that for any functor $F: \cat{C}^\op \otimes \cat{C} \to \cat{C}$ there is an augmentation
\begin{align}
\flint^{X  \in \Proj \cat{C}} F(X,X) \to \fint^{X \in \Proj \cat{C}} F(X,X) 
\end{align}
as follows from the definition of the ordinary coend as a coequalizer.
Here the `f' on the right hand side indicates that the same reduction to a finite coend has been used.  
This map is surjective, hence a fibration.
In fact, the zeroth homology of $\flint^{X  \in \Proj \cat{C}} F(X,X)$ is  $\fint^{X \in \Proj \cat{C}} F(X,X)$  as follows again by definition of the ordinary coend.
But in general, there is no reason why $\flint^{X  \in \Proj \cat{C}} F(X,X) $ should be a projective resolution of $\fint^{X \in \Proj \cat{C}} F(X,X)$.
However, we prove below that this will be true when $F$ is exact and 
sends pairs of projective objects to projective objects. 
In this case, we will understand $F$ as a functor $F:\cat{C}^\op \boxtimes \cat{C} \to \cat{C}$, where $\boxtimes$ denotes the Deligne product.
First  observe that by \cite[Proposition~5.1.7]{kl} the exactness of $F$ ensures that the canonical map $\fint^{X \in \Proj \cat{C}} F(X,X) \to \int^{X \in \cat{C}} F(X,X)$ is an isomorphism (this is a statement about ordinary coends). Therefore, we will just write $\int^{X \in \cat{C}} F(X,X)$ instead of $\fint^{X \in \Proj \cat{C}} F(X,X)$. 

\begin{proposition}\label{proporesocoend}
	Let $\cat{C}$ be a finite category and $F:\cat{C}^\op \boxtimes \cat{C} \to \cat{C}$ an exact functor that sends pairs of projective objects to projective objects. Then \begin{align} \flint^{X  \in \Proj \cat{C}} F(X,X) \to \int^{X \in\cat{C}} F(X,X)\end{align} is a projective resolution.
	In particular, for any finite tensor category $\cat{C}$ and any exact functor $G:\cat{C} \to \cat{C}$
\begin{align} \flint^{X \in \Proj \cat{C}} X \otimes G(X)^\vee \to  \int^{X \in \cat{C}} X \otimes G(X)^\vee \end{align} is a projective resolution.
\end{proposition}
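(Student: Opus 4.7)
The plan is to verify the two conditions that identify the augmentation $\flint^{X\in\Proj\cat{C}}F(X,X)\to\int^{X\in\cat{C}}F(X,X)$ as a projective resolution: levelwise projectivity of the left-hand complex, and quasi-isomorphism of the augmentation. Levelwise projectivity is immediate from the hypotheses: each level of the bar construction defining $\flint^{X\in\Proj\cat{C}}F(X,X)$ is, by the very construction of the finite version, a finite direct sum of terms of the form $\cat{C}(X_1,X_0)\otimes\cdots\otimes\cat{C}(X_n,X_{n-1})\otimes F(X_0,X_n)$, with the $X_j$ running through a finite collection of projective objects. Since $\cat{C}$ is a finite category the morphism spaces are finite-dimensional over $k$, and $F(X_0,X_n)$ is projective by hypothesis, so each summand is a finite direct sum of copies of a projective object and thus projective.

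For the quasi-isomorphism I would reduce to classical Hochschild homology. Fix a projective generator $P$ of $\cat{C}$ and set $A:=\End_\cat{C}(P)$, so that Morita equivalence gives $\cat{C}\simeq\Mod_k A$. The Agreement Principle (Theorem~\ref{theoprojectiveforbimodules}) in its generalization to chain complexes of $A$-modules (Remark~\ref{remgendercoend}) then provides a canonical equivalence
\[
\flint^{X\in\Proj\cat{C}}F(X,X)\;\simeq\;CH\bigl(A;F(P,P)\bigr),
\]
where the $A$-bimodule structure on $F(P,P)$ arises from the functoriality of $F$ in its two slot arguments. The essential step is then to show that $F(P,P)$ is projective as a right module over the enveloping algebra $A\e=A\otimes A^\op$. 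Via the identification $\cat{C}^\op\boxtimes\cat{C}\simeq\Mod_k A\e$ and the Eilenberg--Watts theorem, this is forced by the exactness of $F$: such an exact functor is given by tensoring with a flat-hence-projective right $A\e$-module, and tracking through the correspondence identifies this module (up to the Morita identifications involving the relevant projective generators) with $F(P,P)$ equipped with its slot-induced bimodule structure. Consequently $CH(A;F(P,P))$ is concentrated in degree zero with value $F(P,P)\otimes_{A\e}A=\int^{X\in\cat{C}}F(X,X)$, and the augmentation realizes this identification.

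The \emph{in particular} clause follows upon checking the hypotheses for $F(X,Y):=X\otimes G(Y)^\vee$: in a finite tensor category the tensor product is exact in both variables, the duality and $G$ are exact, and the class of projective objects is a two-sided tensor ideal stable under duality, so $F$ sends pairs of projective objects to projective objects.

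The main technical obstacle I anticipate is carrying out the Eilenberg--Watts step cleanly on the Deligne product $\cat{C}^\op\boxtimes\cat{C}$, in particular identifying $P\boxtimes P$ with a projective $A\e$-module in a way compatible with the slot-induced bimodule actions (which in the examples of interest is underwritten by the self-injective nature of finite tensor categories). Should this identification prove delicate in full generality, one can alternatively construct a contracting homotopy on the augmented bar complex directly from the $A\e$-projectivity of $F(P,P)$.
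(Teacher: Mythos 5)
Your reduction is set up correctly: the levelwise projectivity argument is fine, and the passage to $CH(A;F(P,P))$ via Morita equivalence and the Agreement Principle is exactly how the paper's proof begins (with $P=A$ the free module). The target of your key step is also the right one: it would indeed suffice to know that the coefficient bimodule $F(A,A)$ is flat over $A\e$, since then $\operatorname{Tor}^{A\e}_{>0}(A,F(A,A))=0$. The gap is in the Eilenberg--Watts step. Under the equivalence $\cat{C}^\op\boxtimes\cat{C}\simeq\Mod_k A\e$, the free module $A\e$ does \emph{not} correspond to $P\boxtimes P$: projective objects of $\cat{C}^\op$ are \emph{injective} objects of $\cat{C}$, so the free rank-one $A\e$-module corresponds to $I\boxtimes A$ with $I$ the linear dual of the regular representation, which is injective but in general not projective in $\cat{C}$. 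Hence the representing bimodule of the exact functor $F$ is $F(I,A)$ with its own induced $A\e$-action, not $F(A,A)$ with the slot-induced one, and the first statement of the Proposition concerns a general finite category, where no self-injectivity is available to conflate the two. Your fallback (a contracting homotopy built from the $A\e$-projectivity of $F(P,P)$) does not close the gap, because it presupposes the very projectivity whose proof is at issue.

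The fact you need is nevertheless true, and the paper obtains it by a mechanism that avoids representability altogether: regard $A\boxtimes A$ as an object of $\cat{C}^\op\boxtimes\cat{C}$ carrying, besides its module structure, an \emph{extra} $A$-bimodule structure (left multiplication on the first copy, right multiplication on the second), so that $F(A,A)$ with its slot-induced bimodule structure is literally $F$ applied to this bimodule object. The Hochschild complex of $A\boxtimes A$ for the extra action has, as underlying complex of vector spaces, the Hochschild complex of the free $A$-bimodule $A\otimes_k A\cong A\e$, which is acyclic in positive degrees by the standard extra-degeneracy argument; exactness of $F$ then transports this acyclicity to $CH(A;F(A,A))$. (Run against an arbitrary free resolution rather than the bar resolution, the same argument shows that $F(A,A)$ is flat, hence projective, over $A\e$ --- so your intermediate claim is correct, but its justification is essentially the paper's argument rather than Eilenberg--Watts.) Your verification of the hypotheses in the ``in particular'' clause is fine.
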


\begin{proof}
	By what has just been explained above, it remains to prove
	$H_p\left(  \flint^{X  \in \Proj \cat{C}} F(X,X)  \right)=0$ for $p\neq 0$. 
	For the proof of this fact, we write $\cat{C}$ as finite-dimensional modules over a finite-dimensional algebra $A$. By the Agreement Principle (Theorem~\ref{theoprojectiveforbimodules}) $\flint^{X  \in \Proj \cat{C}} F(A,A)$ is equivalent to the Hochschild chains $A \stackrel{\mathbb{L}}{\otimes}_{A\e}   F(A,A)$ for the $\cat{C}$-valued $A$-bimodule $F(A,A)$. 
	In order to compute the corresponding Hochschild homology, we consider the object $A \boxtimes A \in \cat{C}^\op \boxtimes \cat{C}$, where $A$ acts by right multiplication on the first copy and by left multiplication on the second copy. But $A$ can additionally act from the left on the first copy and from the right on the second copy. This makes $A\boxtimes A$ an $A$-bimodule in $\cat{C}^\op \boxtimes \cat{C}$. The Hochschild complex for this bimodule is given by
	\begin{equation}
	\begin{tikzcd}
\dots \ar[r, shift left=6]  \ar[r, shift left=2]
\ar[r, shift right=6]  \ar[r, shift right=2]
& 	A^{\otimes 2} \bullet \left(   A \boxtimes A    \right) 
\ar[l, shift left=4]  \ar[l]
\ar[l, shift right=4]  
\ar[r, shift left=4] \ar[r, shift right=4] \ar[r] &  A \bullet \left(   A \boxtimes A    \right) \ar[r, shift left=2] \ar[r, shift right=2]
\ar[l, shift left=2] \ar[l, shift right=2]
 & A \boxtimes A\ ,  \ar[l] \\
		\end{tikzcd}
	\end{equation}
	where 	$\bullet$ denotes the tensoring of objects in $\cat{C}^\op\boxtimes \cat{C}$ with vector spaces from the left. This is a complex (or simplicial object) in $\cat{C}\boxtimes \cat{C}$. The underlying complex of vector spaces, however, is just the Hochschild complex for the free $A$-bimodule. Therefore, the augmentation
	\begin{align} A \stackrel{\mathbb{L}}{\otimes}_{A\e} \left(   A \boxtimes A \right) \to A \otimes_{A\e} \left(   A \boxtimes A \right)\label{augeqn1}\end{align} is an equivalence.
	Since $F$ is linear, the augmentation map
	\begin{align} A \stackrel{\mathbb{L}}{\otimes}_{A\e} F(A,A)\to A \otimes_{A\e} F(A,A) \label{augeqn2}\end{align} of the
	Hochschild complex $A \stackrel{\mathbb{L}}{\otimes}_{A\e} F(A,A)$ is the image of the equivalence \eqref{augeqn1} under $F$.
	By exactness of $F$ the map
	\eqref{augeqn2} is now also an equivalence, which proves the claim.
\end{proof}

\begin{corollary} \label{corresocoend2}
For any finite tensor category $\cat{C}$,
the object $\flint^{X \in \Proj \cat{C}} X \otimes X^\vee$ is 
a projective resolution of the canonical coend $\mathbb{F}=\int^{X\in\cat{C}} X\otimes X^\vee$ and allows us to write the Hochschild complex of $\cat{C}$ up to equivalence as
\begin{align} \lint^{X\in\Proj \cat{C}} \cat{C}(X,X) \simeq \cat{C}\left( I, \flint^{X \in \Proj \cat{C}} X \otimes X^\vee  \right) \ .  \end{align} 
\end{corollary}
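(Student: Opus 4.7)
The plan is to derive this corollary as a direct specialization of the two preceding results, Theorem~\ref{thmobHH} and Proposition~\ref{proporesocoend}, with the endofunctor $G$ taken to be the identity $\id_{\cat{C}}$.

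First, I would invoke Theorem~\ref{thmobHH} with $G = \id_{\cat{C}}$. Since $\cat{C}(\id_{\cat{C}}(X),X) = \cat{C}(X,X)$ and $X \otimes \id_{\cat{C}}(X)^\vee = X \otimes X^\vee$, the theorem immediately supplies a canonical equivalence
\begin{align}
\lint^{X \in \Proj \cat{C}} \cat{C}(X,X) \simeq \cat{C}\left(I, \flint^{X \in \Proj \cat{C}} X \otimes X^\vee\right),
\end{align}
which is the second assertion of the corollary.

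Second, I would apply Proposition~\ref{proporesocoend} (its ``in particular'' variant) to the exact functor $G = \id_{\cat{C}}$; equivalently, to the functor $F : \cat{C}^\op \boxtimes \cat{C} \to \cat{C}$, $F(X,Y) = X \otimes Y^\vee$. The hypotheses of Proposition~\ref{proporesocoend} are easily checked: the duality $(-)^\vee : \cat{C}^\op \to \cat{C}$ is an equivalence and therefore exact, and the tensor product of a finite tensor category is biexact, so $F$ is exact; moreover, $P \otimes Y$ is projective for any $P \in \Proj \cat{C}$ and any $Y \in \cat{C}$, so in particular $X \otimes Y^\vee$ is projective whenever $X$ is. The proposition then tells me that the augmentation
\begin{align}
\flint^{X \in \Proj \cat{C}} X \otimes X^\vee \to \int^{X \in \cat{C}} X \otimes X^\vee = \mathbb{F}
\end{align}
is a projective resolution of the canonical coend $\mathbb{F}$, which is the first assertion of the corollary.

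Since all of the content is packaged into the two preceding results, no new arguments are required, and there is no real obstacle to overcome beyond bookkeeping. The only point worth remarking on is that the Agreement Principle (Theorem~\ref{theoprojectiveforbimodules}) is tacitly used inside Theorem~\ref{thmobHH} to replace the derived coend $\lint^{X \in \Proj \cat{C}}$ by a finite one $\flint^{X \in \Proj \cat{C}}$, and that the identification $\fint^{X \in \Proj \cat{C}} X \otimes X^\vee \cong \int^{X \in \cat{C}} X \otimes X^\vee = \mathbb{F}$ employed in Proposition~\ref{proporesocoend} relies on \cite[Proposition~5.1.7]{kl} together with the exactness of $F$; these steps are already available in the cited ingredients, so the corollary follows by direct combination.
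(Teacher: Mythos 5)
Your proposal is correct and matches the paper's intent exactly: the corollary is stated without a separate proof precisely because it is the specialization of Theorem~\ref{thmobHH} and of the ``in particular'' clause of Proposition~\ref{proporesocoend} to $G = \id_{\cat{C}}$, which is what you carry out. Your verification of the hypotheses (exactness of $X \otimes (-)^\vee$ and preservation of projectivity) and your remarks on the role of the Agreement Principle and of \cite[Proposition~5.1.7]{kl} are accurate and consistent with the surrounding text.
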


This finally allows us to describe the Hochschild complex as a generalized space of class functions, i.e.\ as a hom from the monoidal unit to the homotopy Lyubashenko coend.
Note however that Corollary~\ref{corresocoend2} does \emph{not} say that $\lint^{X\in\Proj \cat{C}} \cat{C}(X,X)$ is equivalent to $\cat{C}(I,\cof \mathbb{F})$ for an \emph{arbitrary} projective resolution $\cof \mathbb{F}$ of $\mathbb{F}$. Of course, $\cof \mathbb{F} \simeq \flint^{X \in \Proj \cat{C}} X \otimes X^\vee $ by the essential uniqueness of projective resolutions, but a priori $\cat{C}(I,-)$ need not preserve this equivalence. However, in the following Lemma, whose proof is based on the theory of modified traces \cite{mtrace}, 
we find that, under the assumption that $\cat{C}$ is pivotal, this will be true.

\begin{lemma}\label{lemmaprojCY}
Let $\cat{C}$ be a pivotal tensor category with finite-dimensional morphism spaces
and enough projectives.
Then for $X\in \cat{C}$ the functor $\cat{C}(X,-)$ preserves equivalences between non-negatively differential graded objects which are degree-wise projective.
\end{lemma}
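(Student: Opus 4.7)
The plan is to reduce the assertion to the standard fact that every acyclic, non-negatively graded, degree-wise projective chain complex in $\cat{C}$ is null-homotopic. Given an equivalence $f \colon P_\bullet \to Q_\bullet$ between such complexes, I would form the mapping cone $C_\bullet = \operatorname{cone}(f)$, whose $n$-th term is $Q_n \oplus P_{n-1}$. Then $C_\bullet$ is again non-negatively graded and degree-wise projective, since finite direct sums of projective objects are projective, and it is acyclic precisely because $f$ is a quasi-isomorphism. As the $k$-linear functor $\cat{C}(X,-)$ is additive, it commutes with the formation of the mapping cone, so $\cat{C}(X, C_\bullet) \cong \operatorname{cone}(\cat{C}(X,f))$. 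Once $C_\bullet$ is known to be null-homotopic, the same holds for $\cat{C}(X, C_\bullet)$, and therefore $\cat{C}(X, f)$ is a quasi-isomorphism of chain complexes of vector spaces.

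For the key step, I would construct a contracting homotopy $h_n \colon C_n \to C_{n+1}$ by induction on $n$. For $n=0$, acyclicity forces $d_1 \colon C_1 \to C_0$ to be an epimorphism, and projectivity of $C_0$ supplies a section $h_0$ with $d_1 h_0 = \id_{C_0}$. For the inductive step, suppose $h_0, \ldots, h_{n-1}$ have been constructed with $d_{k+1} h_k + h_{k-1} d_k = \id_{C_k}$ for $k<n$. A direct calculation using $d_n h_{n-1} = \id - h_{n-2} d_{n-1}$ and $d_{n-1} d_n = 0$ shows that $\id_{C_n} - h_{n-1} d_n$ is annihilated by $d_n$, so it factors through $\ker d_n = \operatorname{im} d_{n+1}$. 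Projectivity of $C_n$ then lifts this factorisation along the epimorphism $d_{n+1} \colon C_{n+1} \twoheadrightarrow \operatorname{im} d_{n+1}$ to the desired $h_n$.

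I do not foresee a serious obstacle, as this is the standard contractibility argument for bounded below complexes of projectives in an abelian category with enough projectives; the finiteness and pivotal hypotheses are used only implicitly, through the existence of projective objects and the abelian structure on $\cat{C}$. The role suggested for the modified trace of \cite{mtrace} is, from my perspective, a complementary viewpoint rather than a necessity: for projective $P$ and any $X \in \cat{C}$, the non-degenerate pairing $\cat{C}(X,P) \otimes \cat{C}(P,X) \to k$ induced by the modified trace, together with finite-dimensionality of morphism spaces, yields a natural isomorphism $\cat{C}(X,P) \cong \cat{C}(P,X)^\ast$. Applied level-wise this identifies $\cat{C}(X, P_\bullet)$ with the linear dual of $\cat{C}(P_\bullet, X)$ and transports the question to the contravariant functor $\cat{C}(-,X)$ on degree-wise projective complexes, where the same null-homotopy fact again finishes the proof.
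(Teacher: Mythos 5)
Your proof is correct, and it takes a genuinely different route from the paper. The paper's proof invokes the right modified $\alpha$-trace on the tensor ideal of projectives to produce natural non-degenerate pairings $\cat{C}(X,P)\otimes\cat{C}(\alpha\otimes P,X)\to k$, hence natural isomorphisms $\cat{C}(X,P)\cong\cat{C}(\alpha\otimes P,X)^{*}$; this converts the covariant functor $\cat{C}(X,-)$ on degree-wise projective complexes into the linear dual of a contravariant functor $\cat{C}(-,{}^{\vee}\alpha\otimes X)$, for which preservation of quasi-isomorphisms between bounded-below complexes of projectives is quoted as a standard fact. You instead apply that same standard fact, in the form that an acyclic, non-negatively graded, degree-wise projective complex is contractible, directly to the mapping cone, and then observe that any additive functor commutes with cones and preserves contracting homotopies. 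Your inductive construction of the homotopy is the standard one and is valid in any abelian category. What your argument buys is generality and economy: it uses none of the pivotality, the algebraic closedness, or the finite-dimensionality of morphism spaces --- only that $\cat{C}$ is $k$-linear abelian and the complexes are degree-wise projective --- so the lemma holds under weaker hypotheses than stated. What the paper's route buys is the duality $\cat{C}(X,P)\cong\cat{C}(\alpha\otimes P,X)^{*}$ itself, which is of independent interest in this setting, but it is not needed for the conclusion. One small inaccuracy in your closing remark: the modified trace pairs $\cat{C}(X,P)$ with $\cat{C}(\alpha\otimes P,X)$, not with $\cat{C}(P,X)$; dropping the twist by $\alpha$ (the socle of the projective cover of the unit) is only legitimate when $\cat{C}$ is unimodular. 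Since that remark is not used in your main argument, it does not affect its validity.
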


\begin{proof}
Let $\alpha$ be the
socle of
the projective cover of the monoidal unit
and consider  the right modified $\alpha$-trace on the tensor ideal of projective objects \cite[Section~5.3]{mtrace}. This trace in particular provides non-degenerate pairings
\begin{align}
\cat{C}(X,P) \otimes \cat{C}(\alpha \otimes P , X) \to k \label{nondegeneratepairingeqn}
\end{align}
for $X \in \cat{C}$ and $P \in \Proj \cat{C}$ which are moreover natural in $X$ and $P$. In particular, $\cat{C}(X,P)\cong \cat{C}(\alpha \otimes P , X)^*$ by natural isomorphisms.

Now let $P \to Q$ be an equivalence of non-negatively differential graded objects in $\cat{C}$ which are degree-wise projective. 
We need to show that 
for $X \in \cat{C}$ the induced map $\cat{C}(X,P) \to \cat{C}(X,Q)$ is an equivalence. Using the non-degenerate pairing \eqref{nondegeneratepairingeqn} we can equivalently show 
that 
the induced map $\cat{C}(P,{^\vee \alpha} \otimes X)^* \to \cat{C}(Q,{^\vee \alpha} \otimes X)^*$
is an equivalence.
 Since this is a map of finite-dimensional differential graded vector spaces, it suffices to show that the dual map $\cat{C}(Q,{^\vee \alpha} \otimes X) \to \cat{C}(P,{^\vee \alpha} \otimes X)$ is an equivalence. But this is a standard fact from homological algebra, see e.g.\
\cite[Theorem~7.5]{iversen}.
\end{proof}

\begin{theorem}\label{theoderivedcoendviaobject}
Let $\cat{C}$ be a pivotal finite tensor category.
Then for any exact functor $G:\cat{C} \to \cat{C}$, there is a canonical equivalence
\begin{align}
\lint^{X \in \Proj \cat{C}}  \cat{C}(G(X),X) \simeq \cat{C}\left(I,  \cof \int^{X \in \cat{C}} X \otimes G(X)^\vee   \right) \ ,     \label{eqnderivedcoendviaobject}
\end{align}
where $\cof \int^{X \in \cat{C}} X \otimes G(X)^\vee$ is an arbitrary projective resolution of  $\int^{X \in \cat{C}} X \otimes G(X)^\vee$. In particular,
\begin{align}
\lint^{X \in \Proj \cat{C}}  \cat{C}(X,X) \simeq \cat{C}\left(I,  \cof \mathbb{F} \right) \ . 
\end{align}
\end{theorem}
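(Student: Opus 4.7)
The plan is to assemble the theorem directly from the three ingredients already proven in this section, using pivotality only at the very last step.

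First, I would apply Theorem~\ref{thmobHH} to obtain
\begin{align}
\lint^{X \in \Proj \cat{C}} \cat{C}(G(X),X) \simeq \cat{C}\!\left(I, \flint^{X \in \Proj \cat{C}} X \otimes G(X)^\vee\right).
\end{align}
This reduces the theorem to comparing the right-hand side with $\cat{C}(I, \cof \int^{X \in \cat{C}} X \otimes G(X)^\vee)$ for an arbitrary projective resolution $\cof$. Note that the hypotheses of Theorem~\ref{thmobHH} do not require pivotality.

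Next, I would verify that Proposition~\ref{proporesocoend} applies to the bifunctor $F(X,Y) = X \otimes G(Y)^\vee$. This functor is exact (since $G$ is exact, $(-)^\vee$ is an equivalence, and the tensor product of a finite tensor category is biexact) and sends pairs of projective objects to projective objects (because projective objects form a tensor ideal and, in a finite tensor category, duality swaps projectives and injectives, which coincide). Hence $\flint^{X \in \Proj \cat{C}} X \otimes G(X)^\vee$ is a projective resolution of $\int^{X \in \cat{C}} X \otimes G(X)^\vee$, and by essential uniqueness of projective resolutions it is connected to $\cof \int^{X \in \cat{C}} X \otimes G(X)^\vee$ by a zig-zag of equivalences between non-negatively graded, degree-wise projective complexes.

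The main obstacle is then pushing this zig-zag through the functor $\cat{C}(I,-)$, which is only left exact in general and so need not preserve quasi-isomorphisms of complexes. This is precisely where pivotality enters: Lemma~\ref{lemmaprojCY} asserts that $\cat{C}(I,-)$ (in fact $\cat{C}(X,-)$ for any $X$) does preserve equivalences between non-negatively graded degree-wise projective complexes, the proof going via the modified trace on the ideal of projective objects. Applying this lemma to the zig-zag above yields
\begin{align}
\cat{C}\!\left(I, \flint^{X \in \Proj \cat{C}} X \otimes G(X)^\vee\right) \simeq \cat{C}\!\left(I, \cof \int^{X \in \cat{C}} X \otimes G(X)^\vee\right),
\end{align}
which combined with the opening equivalence proves~\eqref{eqnderivedcoendviaobject}. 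The particular statement for $\mathbb{F}$ is the special case $G=\id$, in view of the definition of the Lyubashenko coend.
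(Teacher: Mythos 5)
Your proposal is correct and follows essentially the same route as the paper's own proof: Theorem~\ref{thmobHH} for the first equivalence, Proposition~\ref{proporesocoend} to identify $\flint^{X \in \Proj \cat{C}} X \otimes G(X)^\vee$ as a projective resolution, and Lemma~\ref{lemmaprojCY} (which is exactly where pivotality and algebraic closedness enter) to transport the comparison of resolutions through $\cat{C}(I,-)$. The only cosmetic difference is that you spell out the exactness and projectivity-preservation hypotheses of Proposition~\ref{proporesocoend}, for which the simpler justification is that $P\otimes Z$ is projective whenever $P$ is, so only the first tensor factor needs to be projective.
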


\begin{proof} By Theorem~\ref{thmobHH} we know 
\begin{align}
\lint^{X \in \Proj \cat{C}}  \cat{C}(G(X),X) \simeq \cat{C}\left(I,  \flint^{X \in \Proj \cat{C}} X \otimes G(X)^\vee   \right) \ ; 
\end{align} moreover, $\flint^{X \in \Proj \cat{C}} X \otimes G(X)^\vee $ is a projective resolution of $\int^{X \in \cat{C}} X \otimes G(X)^\vee$ by Proposition~\ref{proporesocoend}. 
Now \eqref{eqnderivedcoendviaobject} holds  for any projective resolution of $\int^{X \in \cat{C}} X \otimes G(X)^\vee$ because $\cat{C}(I,-)$ 
preserves equivalences between two projective resolutions by Lemma~\ref{lemmaprojCY}.
\end{proof}

\subsection{A differential graded version of the Verlinde algebra\label{secdgva}}	
This subsection is concerned with the additional structure on the Hochschild complex of a finite tensor category that additionally has a \emph{braiding}.

This is motivated as follows: It was briefly explained in the introduction that for a semisimple modular category $\cat{C}$, we obtain an algebra structure on the vector space $\int^{X \in \cat{C}} \cat{C}(X,X)$. This can be seen most conceptually by constructing the (anomalous) 3-2-1-dimensional topological field theory associated to $\cat{C}$ and by evaluating it on the torus. Then the multiplication on $\int^{X \in \cat{C}} \cat{C}(X,X)$ comes from the evaluation of this topological field theory on the bordism $P\times \mathbb{S}^1:\mathbb{T}^2 \sqcup \mathbb{T}^2 \to \mathbb{T}^2$, where $P: \mathbb{S}^1 \sqcup \mathbb{S}^1 \to \mathbb{S}^1$ is the pair of pants. We depict $P\times \mathbb{S}^1$ suggestively as:
\begin{center}
	\centering
	\includegraphics[width=0.25\textwidth]{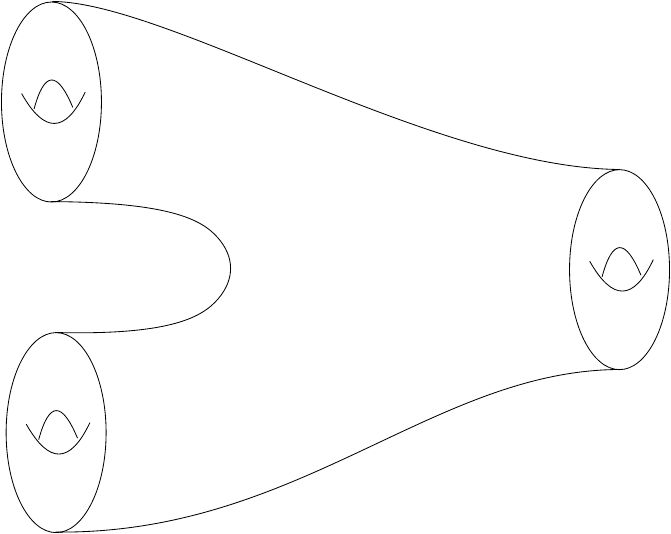}
\end{center}
This multiplication is easily seen to be commutative thanks to the braiding on $\cat{C}$. The resulting algebra is sometimes called the \emph{Verlinde algebra} of $\cat{C}$.

When considering the complex $\lint^{X \in \Proj \cat{C}} \cat{C}(X,X)$ in the non-semisimple case, we cannot argue via topological field theory to obtain the multiplication.
Still, we will make $\lint^{X \in \Proj \cat{C}} \cat{C}(X,X)$ into a differential graded algebra below and prove that it is an algebra over the little disk operad $E_2$.
We see this $E_2$-algebra as a differential graded analogue of the Verlinde algebra. While the motivation in terms of topological field theory relies on modularity, the $E_2$-multiplication just needs the braiding. \\

Before stating the precise result, let us recall that the \emph{little disk operad $E_2$} is the topological operad whose space
$E_2(n)$ of arity $n$ operations is given by the space of affine embeddings from $n$ disks into another disk, see \cite[Chapter~4]{Fresse1} for details. This operad describes algebraic structures with a homotopy associative multiplication whose commutativity behavior is controlled by the braid group. 
By $ \Pi E_2$ we denote the operad in groupoids resulting from application of the fundamental groupoid functor $\Pi$ to $E_2$.

The space $E_2(n)$ is an Eilenberg-MacLane space $K(P_n , 1)$, where $P_n$ is the pure braid group on $n$ strands \cite[Chapter~5]{Fresse1}. We can also use an alternative description of $E_2(n)$ based on the short exact sequence
\begin{align} 0 \to P_n \to B_n \to \Sigma_n \to 0 \end{align} featuring besides the pure braid group $P_n$ also the braid group $B_n$ on $n$ strands and the permutation group $\Sigma_n$ on $n$ letters. The projection $B_n \to \Sigma_n$ defines a transitive action of $B_n$ on $\Sigma_n$,
and $\Pi E_2(n)$ is equivalent to the corresponding action groupoid,
\begin{align} \Pi E_2(n) \simeq \Sigma_n // B_n \ .      \label{eqnactiongroupoiddescription}    \end{align}
A permutation $\sigma \in \Sigma_n$ describes the affine embedding which aligns $n$ disks next to each other on the equator of a bigger disk with the order prescribed by $\sigma$.

As explained e.g.\ in \cite{salvatorewahl} or \cite[Chapter~5~and~6]{Fresse1}, algebras over $\Pi E_2$ are equivalent to braided monoidal categories; and in the description \eqref{eqnactiongroupoiddescription} of $\Pi E_2(2)$ we have the correspondences
\begin{align}
	\left\{\begin{array}{rcl}
		\text{identity permutation} & \xleftrightarrow{\ \text{\phantom{long}}\ }   & \text{tensor product}, \\ 
		\text{transposition of two letters} & \xleftrightarrow{\ \text{\phantom{long}}\ }   & \text{opposite tensor product}, \\ 
		\text{generator of $B_2$ braiding the two strands} & \xleftrightarrow{\ \text{\phantom{long}}\ }   & \text{braiding.} \\ 
	\end{array}\right\}\label{correspondenceeqn}
\end{align} 
Using the free functor $k[-]$ from sets to $k$-vector spaces we obtain from  $\Pi E_2$ an operad $k[\Pi E_2]$ in $k$-linear categories. Algebras over this operad are equivalent to $k$-linear braided monoidal categories.

There is a non-unital version of the $E_2$-operad that we call $\nE_2$. It has the same arity $n$ operations as $E_2$ for $n\ge 1$, but in arity zero, $\nE_2$ is empty unlike $E_2$ which is given by a point in arity zero. Categorical $\nE_2$-algebras are \emph{non-unital} braided monoidal categories, i.e.\ they do not necessarily have a monoidal unit.
We can now make the following elementary observation:

\begin{lemma}\label{lemmaaux1}
	For a braided finite  tensor category $\cat{C}$ over $k$, the subcategory $\Proj \cat{C} \subset \cat{C}$ of projective objects is a $k[\Pi \nE_2]$-algebra in $k$-linear categories. 
\end{lemma}

\begin{proof}
	Duality and exactness of the monoidal product ensure that the tensor product of two projective objects is again projective making $\Proj \cat{C}$ a $k$-linear non-unital monoidal category.
	If $\cat{C}$ is additionally braided, $\Proj \cat{C}$ is a $k$-linear non-unital braided monoidal  category, which proves the assertion.
\end{proof}

In order to obtain a differential graded version of the Verlinde algebra,
we combine the above Lemma with the following facts:

\begin{enumerate}[label={\normalfont(\arabic*)}]

	\item \label{prep1}
By definition,
for a $k$-linear category $\cat{D}$,
the homotopy coend $
\lint^{X\in\cat{D}} \cat{D}(X,X)$
is the realization of the simplicial vector space $\Loop\cat{D}$ which in degree $n$ is given by
\begin{align}
	\Loop_n \cat{D} = \bigoplus_{X_0,\dots,X_n \in \cat{D}} \cat{D}(X_1,X_0) \otimes \dots \otimes \cat{D}(X_n,X_{n-1}) \otimes \cat{D}(X_0,X_n) \ , \label{eqncatCs}
\end{align}
i.e.\ by the space of loops of morphisms in $\cat{D}$ through $n+1$ objects (as one finds by specializing Definition~\ref{defibar} to the case of a hom functor).
In other words, $\lint^{X\in\cat{D}} \cat{D}(X,X)$ is given by normalized chains on $\Loop \cat{D}$; in formulae  
\begin{align}
	\int_\mathbb{L}^{X \in \cat{D}} \cat{D}(X,X) \cong N_*(\Loop \cat{D})\label{eqncoendcsn} \ . 
\end{align}
The assignment $\cat{D} \mapsto \Loop \cat{D}$ yields a symmetric monoidal functor $\Cat_k \to \sVect_k$ from $k$-linear categories to simplicial $k$-vector spaces.

\item \label{prep2}

	For any symmetric monoidal bicategory $\cat{M}$, an $\cat{M}$-valued algebra over an $\cat{M}$-valued operad $\O$ can equivalently be described as a symmetric monoidal functor out of the symmetric monoidal category $F\O$ freely generated by that operad, see e.g.\ \cite[Section~1]{horel}. For a groupoid-valued operad like the non-unital $E_2$-operad $\Pi \nE_2$, we may see $F\Pi \nE_2$ actually as a symmetric monoidal bicategory. A $\Cat_k$-valued non-unital $E_2$-algebra may now be described as symmetric monoidal functor $F\Pi \nE_2\to\Cat_k$. 

\end{enumerate}

\begin{proposition}\label{propdgva}
	For every braided finite tensor category $\cat{C}$,
	the Hochschild complex $ \lint^{X \in \Proj \cat{C}} \cat{C}(X,X)$ is naturally a non-unital $E_2$-algebra in differential graded vector spaces. \end{proposition}

As the proof will show, neither rigidity nor finiteness of $\cat{C}$ are needed as long as $\Proj \cat{C}$ is still a non-unital $E_2$-algebra.

\begin{proof}
  By Lemma~\ref{lemmaaux1} and preparation~\ref{prep2} above $\Proj \cat{C}$ gives rise to a symmetric monoidal functor $F\Pi \nE_2\to\Cat_k$ that we can postcompose with the  symmetric monoidal functor $\Loop : \Cat_k\to\sVect_k$ from preparation~\ref{prep1}. The resulting symmetric monoidal functor $F\Pi \nE_2\to\sVect$ gives us, again by preparation~\ref{prep2}, the structure  of a non-unital differential graded $E_2$-algebra in simplicial vector spaces. After taking (normalized) chains, the assertion follows from~\eqref{eqncoendcsn}. 
	\end{proof}

In order to write down the product 
underlying the non-unital $E_2$-algebra, we need to establish some notation: Recall from \eqref{eqncatCs} that elements $\underline{f},\underline{g} \in \Loop_n \Proj \cat{C}$ are loops of $n+1$ morphisms between projective objects in $\cat{C}$. Using the monoidal product $\otimes$ of $\cat{C}$ we can tensor the morphisms of $\underline{f}$ and $\underline{g}$
together to obtain an element in $\Loop_n \Proj \cat{C}$ that we denote by $\underline{f} \otimes \underline{g}$. 
Next recall that for $0\le j\le n$
the degeneracy map $s_j : \Loop_n \Proj \cat{C} \to \Loop_{n+1} \Proj \cat{C}$ inserts the identity of the $j$-th object.
For a $(p,q)$-shuffle $(\mu,\nu)=(\mu_1,\dots,\mu_p,\nu_1,\dots,\nu_q)$, i.e.\ a permutation  of $\{1,\dots,p+q\}$ such that $\mu_1<\mu_2<\dots<\mu_p$ and $\nu_1<\nu_2<\dots<\nu_q$, we define the compositions
\begin{align}
	s_\mu := s_{\mu_p-1} \circ \dots \circ s_{\mu_1-1} \ ,\quad
	s_\nu := s_{\nu_q-1} \circ \dots \circ s_{\nu_1-1}
\end{align} of degeneracy maps.

\begin{corollary}\label{core2atchainlevel}
	For elements $\underline{f}$ and $\underline{g}$ of
	homological degree $p$ and $q$, respectively, the product 
	 from Proposition~\ref{propdgva} is explicitly given by 
	\begin{align}
		 \sum_{ \substack{ (p,q)\text{-shuffles} \ (\mu,\nu)   \\ \text{of $p+q$}}     }   \sign(\mu,\nu) \ \ s_\nu(\underline{f}) \otimes s_\mu(\underline{g})  \ . \label{eqnmultiplication}
	\end{align}
\end{corollary}

\begin{proof}
	By construction the $E_2$-multiplication on $\Loop \Proj \cat{C}$ comes from tensoring loops of morphisms together using the monoidal product of $\cat{C}$. 
	In order to obtain a formula for this multiplication on $\lint^{X \in \Proj \cat{C}} \cat{C}(X,X)$,
	we  use the structure maps of the symmetric lax monoidal functor $N_*$, namely the Eilenberg-Zilber maps \cite[8.5.4]{weibel}, and arrive at \eqref{eqnmultiplication}.
\end{proof}

\begin{remark}
	A finite tensor category is in particular rigid, and  the duality, when combined with the above methods, will lead to a 
	comultiplication on the Hochschild complex of a braided finite  tensor category
	(this corresponds to reading the bordism 
	$P\times \mathbb{S}^1:\mathbb{T}^2 \sqcup \mathbb{T}^2 \to \mathbb{T}^2$
	backwards).
A further investigation of the comultiplication, its relation to the multiplication
and  the higher structures they both give rise to
are beyond the scope of this article. 
\end{remark}

\begin{remark}[Boundary conditions and the Swiss-Cheese operad]
	Consider a braided finite tensor category $\cat{C}$ and a finite tensor category $\cat{W}$ together with a braided monoidal functor
	$
	F:\cat{C} \to Z( \cat{W})$.
	Such a structure appears in the description of the boundary condition in three-dimensional topological field theory \cite{fsv}.
	By one of the main results of \cite{idrissi}, this structure 
	precisely amounts to $(\cat{C},\cat{W},F)$ being a categorical algebra over the Swiss-Cheese operad introduced by Voronov \cite{voronov}. 
	A straightforward modification of Proposition~\ref{propdgva} shows us now 
	that the Hochschild chains $\lint^{X \in \Proj \cat{C}} \cat{C}(X,X)$ and $\lint^{Y \in \Proj \cat{W}} \cat{W}(Y,Y)$ of $\cat{C}$ and $\cat{W}$ and the map between those induced by $F$ form a differential graded Swiss-Cheese algebra.  
	By \cite{hoefel} the corresponding homology yields an algebraic structure closely related to the homotopy algebras used by Kajiura and Stasheff  \cite{ks} for the description of open-closed string field theories.
\end{remark}

\subsection{The equivariant case: Differential graded little bundles algebras\label{secequiv}}
Based on the discussion of the differential graded version of the Verlinde algebra 
and its motivation by topological field theory,
we can suggest, for a given finite group $G$, a reasonable candidate for a Hochschild complex of a  braided $G$-crossed monoidal category in the sense of Turaev
and exhibit an interesting multiplicative structure on it. 
We will first write down the candidate for the Hochschild complex 
and guess a multiplicative structure based on the ties of 
braided crossed monoidal categories to equivariant field theories. 
Then, we will turn this intuition into a precise statement using the little bundles operad defined in \cite{littlebundles} motivated by its relation to $(\infty,1)$-$G$-equivariant topological field theories \cite{MuellerWoikeHH}.

The notion of a
braided $G$-crossed monoidal category
is  based on 
\cite{turaevgcrossed,turaevhqft}. In the semisimple case, these categories are well-studied objects in equivariant representation theory
\cite{mueger,kirrilovg04,centerofgradedfusioncategories}. 
We follow the definition of \cite{galindo}, where in comparison to \cite{turaevhqft} more general coherence conditions are considered:
For a finite group $G$, a \emph{braided $G$-crossed category} is a $k$-linear category $\cat{C}$ that comes with a decomposition $\cat{C}=\bigoplus_{g \in G} \cat{C}_g$ and  is equipped with the following data:
\begin{itemize}
	\item A homotopy coherent action of $G$ on $\cat{C}$ making $h\in G$ act as an equivalence $\cat{C}_g \to \cat{C}_{hgh^{-1}}, X \mapsto h.X$.
	
	\item A $k$-linear monoidal product sending $\cat{C}_g \otimes \cat{C}_h$ to $\cat{C}_{gh}$. 
	
	\item A $G$-braiding consisting of natural isomorphisms
	\begin{align} X \otimes Y \cong g.Y \otimes X
	\end{align} for $X\in \cat{C}_g$ and $Y\in \cat{C}_h$ (this does \emph{not} yield a braiding on $\cat{C}$). 
\end{itemize}
For the details on the compatibilities and coherence requirements, we refer to \cite{galindo}, see also \cite{maiernikolausschweigerteq} and, additionally, \cite{littlebundles} for a description of braided $G$-crossed categories as algebras over the $G$-colored operad of parenthesized $G$-braids.
We define a \emph{braided finite  $G$-crossed tensor category} as a $k$-linear braided $G$-crossed monoidal category whose underlying $k$-linear monoidal category is a finite tensor category.

Categories of this type are intimately related to three-dimensional $G$-equivariant topological field theory \cite{turaevhqft,htv,hrt}, a flavor of topological field theory in which all manifolds are equipped with principal $G$-bundles. The decoration with principal $G$-bundles leads to interesting phenomena which are not present in the non-equivariant case.

Semisimple $G$-modular categories (a special type of  braided finite $G$-crossed tensor categories) are used in \cite{hrt} to construct a three-dimensional equivariant topological field theory. Conversely, given an extended three-dimensional $G$-equivariant topological field theory, its evaluation on the circle is a semisimple $G$-(multi)mo\-dular category \cite{extofk}. 
In the non-semisimple case, this interpretation of braided $G$-crossed tensor categories in terms of topological field theory breaks down as in the non-equivariant case.

Still, the perspective of topological field theory yields some tools 
for the study of non-semisimple  braided finite $G$-crossed tensor categories:
If $\cat{C}=\bigoplus_{g\in G} \cat{C}_g$ is a semisimple $G$-modular category, then the evaluation of the three-dimensional topological field theory built from $\cat{C}$ on the torus decorated with the principal bundle specified by the two commuting holonomies $g,z\in G$ is given by the coend
$
\int^{X \in \cat{C}_g} \cat{C}_g(z.X,X)
$
as explained in a different language in \cite[Section~VII.3]{turaevhqft} and worked out in terms of coends in \cite[Section~4.6]{extofk}. 
This suggests that in the non-semisimple case the collection of homotopy coends
\begin{align}
	\lint^{X \in \Proj \cat{C}_g} \cat{C}_g(z.X,X)     \label{eqnderivedcoendseq}
\end{align} provide a reasonable generalization of Hochschild chains to the equivariant case. 
More importantly, the topological intuition gives us an idea of the multiplicative structure that we should discover: Following the ideas laid out at the beginning of Section~\ref{secdgva}, crossing the pair of pants with a circle yields a bordism $\mathbb{T}^2  \sqcup \mathbb{T}^2 \to \mathbb{T}^2$.
In the equivariant case, this bordism has to be decorated with principal $G$-bundles. Upon fixing a central element $z\in Z(H)$, each pair of group elements $g_1,g_2\in G$ 
will provide the holonomies for a principal $G$-bundle on the bordism $\mathbb{T}^2  \sqcup \mathbb{T}^2 \to \mathbb{T}^2$ (for this we need $z$ to commute with $g_1$ and $g_2$); i.e.\ 
when denoting the bundle specified by the holonomies $z$ and some $g$ by $(z,g)$, we obtain 
a decorated bordism
\begin{align}  \left(  \mathbb{T}^2, (z,g_1)   \right)   \sqcup \left(  \mathbb{T}^2, (z,g_2)   \right)    \to (\mathbb{T}^2 , (z,    g_1 g_2     )) \ . \label{eqndecoratedbord}
\end{align} 
Note that we treat here the two $\mathbb{S}^1$-factors of the torus differently.
The fact that the holonomies $g_1$ and $g_2$ multiply is a consequence of the fundamental group of the pair of pants.
The decorated bordism \eqref{eqndecoratedbord} should give us a multiplication
\begin{align}
	\lint^{X \in \Proj \cat{C}_{g_1}} \cat{C}_{g_1}(z.X,X) \otimes \lint^{X \in \Proj \cat{C}_{g_1}} \cat{C}_{g_2}(z.X,X) \to \lint^{X \in \Proj \cat{C}_{g_1 g_2}} \cat{C}_{g_1 g_2}(z.X,X)      \label{eqnmultiintuition}
\end{align}
compatible with the group multiplication.
The commutativity behavior should be determined by the braid group action on the groupoid of principal $G$-bundles over the complement of little disk embeddings.
For instance, consider an embedding of two disks into a bigger disk and a principal bundle on the complement of this embedding. Such a principal bundle is determined by the holonomies $g$ and $h$ around the boundaries of the two embedded disks. When moving the disks past each other, these holonomies transform to $ghg^{-1}$ and $h$:
\begin{center}
	\centering\def\svgwidth{0.3\textwidth}
	%% Creator: Inkscape 0.48.3.1, www.inkscape.org
%% PDF/EPS/PS + LaTeX output extension by Johan Engelen, 2010
%% Accompanies image file '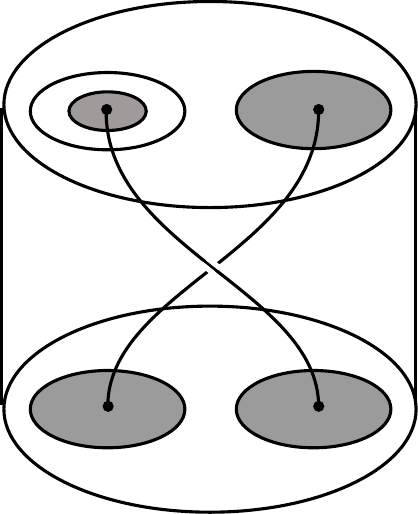' (pdf, eps, ps)
%%
%% To include the image in your LaTeX document, write
%%   \input{<filename>.pdf_tex}
%%  instead of
%%   \includegraphics{<filename>.pdf}
%% To scale the image, write
%%   \def\svgwidth{<desired width>}
%%   \input{<filename>.pdf_tex}
%%  instead of
%%   \includegraphics[width=<desired width>]{<filename>.pdf}
%%
%% Images with a different path to the parent latex file can
%% be accessed with the `import' package (which may need to be
%% installed) using
%%   \usepackage{import}
%% in the preamble, and then including the image with
%%   \import{<path to file>}{<filename>.pdf_tex}
%% Alternatively, one can specify
%%   \graphicspath{{<path to file>/}}
%% 
%% For more information, please see info/svg-inkscape on CTAN:
%%   http://tug.ctan.org/tex-archive/info/svg-inkscape
%%
\begingroup%
  \makeatletter%
  \providecommand\color[2][]{%
    \errmessage{(Inkscape) Color is used for the text in Inkscape, but the package 'color.sty' is not loaded}%
    \renewcommand\color[2][]{}%
  }%
  \providecommand\transparent[1]{%
    \errmessage{(Inkscape) Transparency is used (non-zero) for the text in Inkscape, but the package 'transparent.sty' is not loaded}%
    \renewcommand\transparent[1]{}%
  }%
  \providecommand\rotatebox[2]{#2}%
  \ifx\svgwidth\undefined%
    \setlength{\unitlength}{200.43476563bp}%
    \ifx\svgscale\undefined%
      \relax%
    \else%
      \setlength{\unitlength}{\unitlength * \real{\svgscale}}%
    \fi%
  \else%
    \setlength{\unitlength}{\svgwidth}%
  \fi%
  \global\let\svgwidth\undefined%
  \global\let\svgscale\undefined%
  \makeatother%
  \begin{picture}(1,1.23015191)%
    \put(0,0){\includegraphics[width=\unitlength]{Fig_braidingBW.pdf}}%
    \put(0.66380223,1.095){\color[rgb]{0,0,0}\makebox(0,0)[lb]{\smash{$g$}}}%
    \put(0.23105421,1.095){\color[rgb]{0,0,0}\makebox(0,0)[lb]{\smash{$ghg^{-1}$}}}%
    \put(0.346,0.95){\color[rgb]{0,0,0}\makebox(0,0)[lb]{\smash{$\xrightarrow{ \, g\,\, }$}}}%
    \put(0.34481308,0.081){\color[rgb]{0,0,0}\makebox(0,0)[lb]{\smash{$g$}}}%
    \put(0.63907761,0.081){\color[rgb]{0,0,0}\makebox(0,0)[lb]{\smash{$h$}}}%
  \end{picture}%
\endgroup%

\end{center}
On the left disk, we have indicated the homotopy of classifying maps that acts as a gauge transformation
$h \xrightarrow{\ g \ } ghg^{-1}$. 
The multiplication \eqref{eqnmultiintuition} should have  a symmetry behavior reflecting the topological situation.

As the  main result of this subsection, we prove that indeed this topological intuition describes the multiplicative structure \eqref{eqnmultiintuition} accurately. 
We do this by showing that for a \emph{fixed}
central element $z\in Z(G)$ the assignment $	g \mapsto \lint^{X \in \Proj \cat{C}_g} \cat{C}_g(z.X,X)$ provides a (non-unital) algebra over the differential graded little bundles operad $E_2^G$ introduced in \cite{littlebundles}. 

The little bundles operad is an aspherical topological operad whose colors are the principal $G$-bundles over the circle (modeled as loops in $BG$) and whose operations $E_2^G \binom{\psi}{\underline{\varphi}}$ from a family $\underline{\varphi}=(\varphi_1,\dots,\varphi_n)$ to $\psi$ are given by affine embeddings $f \in E_2(n)$ equipped with a principal $G$-bundle on the complement of the image of 
$f$ restricting to the bundle $(\varphi_1,\dots,\varphi_n)$ on the $n$ inner boundary circles and to $\psi$ on the outer boundary circle.
The little bundles operad can be seen as an operad built from Hurwitz spaces, i.e.\ from the homotopy quotient of the braid group action on the moduli space of principal $G$-bundles over a punctured plane.

In \cite[Theorem~4.11 and 4.13]{littlebundles} $\Pi E_2^G$ is shown to be equivalent to the $G$-colored operad $\PBr^G$ of parenthesized $G$-braids whose categorical algebras are precisely braided $G$-crossed monoidal categories.  This turns the latter into a `topological object'.

Using the little bundles operad we can now make a precise statement about the family of complexes \eqref{eqnderivedcoendseq}:

\begin{proposition}\label{proplittlebundlesalgebra}
	Let $G$ be a finite group and $z\in Z(G)$ a fixed element in its center. Then for any braided finite  $G$-crossed tensor category $\cat{C}$,
	the assignment
	\begin{align}
		g \mapsto    \lint^{X \in \Proj \cat{C}_g} \cat{C}_g(z.X,X)
	\end{align} defines a non-unital
$E_2^G$-algebra in differential graded vector spaces, i.e.\ a non-unital differential graded little bundles algebra.
\end{proposition}

Using the operadic description of braided crossed categories from \cite{littlebundles} the proof is a straightforward generalization of the proof of Proposition~\ref{propdgva}.
	A crucial  step in the proof that is not present in the non-equivariant case is the observation that $z \in Z(G)$, seen as a unary little bundles operation, commutes with every little bundles operation $o : \cat{C}_{\underline{g}} \to \cat{C}_h$ in the sense that there is a canonical natural isomorphism $z.o(-)\cong o.(z.-)$. This follows from the fact that $G$ acts by monoidal functors on $\cat{C}$ and that $z$ lies in the center of  $G$.

\begin{remark}
	The homology $\bigoplus_{g\in G}   H_* \left(\lint^{X \in \Proj \cat{C}_g} \cat{C}_g(z.X,X)\right)$ of the little bundles algebra from Proposition~\ref{proplittlebundlesalgebra} is a graded algebra, but in contrast to the non-equivariant case, it is not graded commutative. Instead, for $x\in H_p\left(\lint^{X \in \Proj \cat{C}_g} \cat{C}_g(z.X,X)\right)$ and $y\in H_q\left(\lint^{X \in \Proj \cat{C}_h} \cat{C}_h(z.X,X)\right)$
	\begin{align} xy = (-1)^{pq}(g.y)x \ .\end{align}
\end{remark}

\spaceplease

\section{Homotopy coherent projective mapping class group action\label{secmcg}}
As the main result of this article, we establish a canonical homotopy coherent projective action of the mapping class group $\SL(2,\mathbb{Z})$ of the torus on the Hochschild complex of a modular category.
As explained in the introduction, our result provides a homotopy coherent extension of the work of \cite{lubamajid,lubacmp,luba,svea,shimizu}.

Let us briefly recall the definition of a modular category: 
For a braided finite tensor category, one defines the \emph{Müger center} as the subcategory spanned by all objects $X\in \cat{C}$ such that the double braiding $c_{Y,X}c_{X,Y}$
with every other object $Y \in \cat{C}$
 is the identity.
A braided finite tensor category is called \emph{non-degenerate} if its Müger center just consists of finite direct sums of the monoidal unit $I$,
see \cite{shimizumodular} for different characterizations of non-degeneracy.
A \emph{ribbon twist} on a braided finite tensor category $\cat{C}$ is a natural automorphism of the identity of $\cat{C}$ whose components $\theta_X:X \to X$ satisfy 
\begin{align}
\theta_{X\otimes Y} &= c_{Y,X}c_{X,Y}(\theta_X\otimes\theta_Y) \ , \\
\theta_I &= \id_I \ , \\
\theta_{X^\vee} &= \theta_X^\vee \ .
\end{align}
A \emph{finite ribbon category} is a braided finite tensor category equipped with a ribbon twist.
Finally, a \emph{modular category} is a finite ribbon category whose underlying braided finite tensor category is non-degenerate.
Since a modular category is ribbon and hence pivotal, we may use the techniques developed in the last section.

\subsection{Homotopy coherent projective actions\label{sechomcoh}}
We begin by recalling the notion of a homotopy coherent (projective) group action and by discussing suitable resolutions in order to write such actions down in the case of interest. The reader familiar with homotopy coherent actions can just skim through the lines below and take note of the specific resolutions that will be used. 

The idea underlying the notion of a homotopy coherent action $\varrho$ of a group $G$ on a chain complex $C$ is to relax the requirement that for $g,h\in G$ the chain maps $\varrho(gh)$ and $\varrho(g)\varrho(h)$ are equal. Instead, they will just be homotopic by a specific homotopy $H_{g,h} : \varrho(gh) \simeq \varrho(g)\varrho(h)$ that does not only exist, but is part of the data. 
Moreover, one requires these homotopies to be coherent, i.e.\ all the different homotopies $H_{g,h}$ for $g,h\in G$ should be related by higher homotopies: For example, for $g,h,\ell \in G$, the diagram
\begin{equation}
\begin{tikzcd}
& \varrho(g)\varrho(h\ell) \ar{rd}{\varrho(g)H_{h,\ell}}  & \\
\varrho(gh\ell)  \ar{ru}{H_{g,h\ell}} \ar{rd}[swap]{H_{gh,\ell}}  & \ & \varrho(g)\varrho(h)\varrho(\ell)\\
& \varrho(gh)\varrho(\ell)\ar[swap]{ru}{H_{g,h}\varrho(\ell)}
\end{tikzcd} 
\end{equation}
is required to commute up to homotopy, and again this homotopy is part of the data -- and so on and so forth,
as will be made precise below; 
for an introduction to homotopy coherent mathematics, we refer to \cite{riehl18}.

In the sequel, we will need a slight variation of the above, namely homotopy coherent \emph{projective} representations of a group $G$.
A projective $G$-representation on a vector space $V$ (or chain complex) 
is a group morphism $G \to \P\Aut(V)$, where $\P\Aut(V)$ is the quotient of $\Aut(V)$ by the normal subgroup $k^\times \cdot \id_V$. Often it is convenient to assign to each $g\in G$ an actual automorphism $\varrho(g)$ of $V$ by choosing a section of $\Aut(V) \to \P\Aut(V)$ as a map of sets. Then
\begin{align}
\varrho(g) \varrho(h) = \xi(g,h) \varrho(gh)
\end{align} for $g,h\in G$ and a cocycle $\xi \in Z^2(G;k^\times)$. We will then say that $\varrho$ is \emph{$\xi$-projective} because once a lift is chosen, the cocycle $\xi$ controls the projectivity.
This point of view is rather helpful because it allows us to describe projective actions via the \emph{twisted group algebra} $k_ \xi [G]$ of $G$ and $\xi \in Z^2(G;k^\times)$.
The underlying vector space of $k_\xi [G]$
is the free vector space on $G$. 
The multiplication is given by
\begin{align}
\langle g\rangle \langle h\rangle =  \xi(g,h) \langle gh\rangle \quad \text{for all}\quad g,h \in G \ ,
\end{align}
where $\langle g\rangle$ is the basis element corresponding to $g\in G$.
The cocycle $ \xi$ will be referred to as the \emph{twist}.
Now a $\xi$-projective $G$-representation on a vector space $V$ 
is just a $k_\xi[G]$-action on $V$.

In order to formalize the notion of a homotopy coherent (projective) action, we may as well define the notion of a homotopy coherent action of an algebra (which will then include the case of a twisted group algebra).
To this end, we recall the bar construction of an algebra $A$ over $k$ that one constructs via the free-forgetful adjunction
\begin{align}
\xymatrix{
	F \,:\, \Vect_k^{}       ~\ar@<0.5ex>[r]&\ar@<0.5ex>[l]  ~\Alg_k  \,:\, U \  .
}
\end{align} 
Suppressing the forgetful functor in the notation, we will see $F$ as an endo\-functor of $\Alg_k $. The algebra
$A$ gives rise to a simplicial algebra $\Bar A$ 
\begin{equation}
\begin{tikzcd}
\dots \ar[r, shift left=6]  \ar[r, shift left=2]
\ar[r, shift right=6]  \ar[r, shift right=2]
& 	F^3 A 
\ar[l, shift left=4]  \ar[l]
\ar[l, shift right=4]  
\ar[r, shift left=4] \ar[r, shift right=4] \ar[r] & F^2 A \ar[r, shift left=2] \ar[r, shift right=2]
\ar[l, shift left=2] \ar[l, shift right=2]
& FA\ ,  \ar[l] \\
\end{tikzcd}
\end{equation}
which in level $n$ is given by $F^{n+1} A$ -- the \emph{bar resolution of $A$}.
Of course, we can also see this simplicial algebra as a differential graded algebra
(via the Dold-Kan correspondence).
The bar construction comes with an augmentation $\Bar A \to A$ which is also a trivial fibration (on the level of underlying simplicial vector spaces, this augmentation admits extra degeneracies). 

The algebra of 0-simplices of $\Bar A$ is the free algebra on the vector space $A$, i.e.\ the tensor algebra on the vector space $A$. For an element $a_1 \otimes \dots \otimes a_n$ in the free algebra on $A$, we will write $(a_1) \dots (a_n)$. This bracket notation borrowed from \cite{riehl18}
is rather convenient because it allows  us 
to write the higher simplices of the bar construction by nested brackets. Then the $j$-th face operator $\partial_j$ deletes the $j$-th pair of brackets (counted from outside to inside). For example, for $a,b\in A$ we have a 1-simplex $((a)(b))$ with $\partial_0 ((a)(b))=(a)(b)$ and $\partial_1 ((a)(b))=(ab)$. Hence, the 0-simplices $(a)(b)$ and $(ab)$ are not equal, but there is a path between them.\label{bracketnotationpage}

In order to define homotopy coherent actions, we also need the internal hom of differential graded vector spaces: 
For differential graded vector spaces and $C$ and $D$,
 their internal hom $[C,D]$ is the differential graded vector space with $[C,D]_n := \prod_{m\in\mathbb{Z}} \Hom_k(C_m,D_{m+n})$. As usual,
 composition endows $[C,C]$ with the structure of a differential graded algebra.

\begin{definition}\label{defhomcohaction}
	For a $k$-algebra $A$, a \emph{homotopy coherent action of $A$ on a differential graded vector space $C$} is a map of differential graded algebras $\Bar A \to [C,C]$.
	\end{definition}

\begin{remark}
	For the reader familiar with homotopy coherent actions,
	 let us remark that this coincides with the usual definition of the homotopy coherent action of an operad \cite{bergermoerdijkbv} because $\Bar A$ is a cofibrant resolution of the operad whose unary operations are given by $A$. 
	\end{remark}

As a consequence, given a group $G$ and a cocycle $\xi \in Z^2(G;k^\times)$, a homotopy coherent $\xi$-projective action of $G$ on a differential graded vector space $C$ is a map $\Bar k_\xi[G] \to [C,C]$ of differential graded algebras.

\subsection{Homotopy coherent projective actions from central extensions of rank one}
Let $	0 \to J \to G \to H \to 0$ be a short exact sequence of groups. If we are given a representation of $G$ on a vector space, then it is easy to decide whether this representation descends to $H$: We just have to verify that all elements in the kernel $J$ of $G \to H$ are sent to the identity. A similar statement holds for projective actions.
If however we are given a (projective) action of $G$ on a chain complex and are able to show that all elements in the kernel $J$ of $G \to H$ act by chain maps which are homotopic to the identity, then this is not enough to conclude that we get \emph{in a canonical way}
 a homotopy coherent (projective) action of the quotient $H$.

The purpose of this subsection is to highlight at least one case, namely that of a central extension of rank one, in which we actually get a homotopy coherent (projective) action of the quotient. This result will be key for the construction of the homotopy coherent mapping class group action in the next subsection.

\begin{proposition}\label{propohtpactiondescend}
	Let 
	$
	0 \to \mathbb{Z} \to G \stackrel{\pi}{\to} H \to 0
	$ be a central extension of groups and $\xi \in Z^2(H;k^\times)$. We denote the image of $1\in\mathbb{Z}$ under $\mathbb{Z} \to G$ by $\tau$.
	Suppose we are given  a $\pi^*\xi$-projective representation  $\varrho$ of $G$ on a chain complex $C$ and a homotopy $\varrho(\tau)    \stackrel{L}{\simeq} \id_C$ such that $L\varrho(g)=\varrho(g)L$ for all $g\in G$. Then this data induces in a canonical way
	a homotopy coherent $\xi$-projective representation of $H$ on $C$.
\end{proposition}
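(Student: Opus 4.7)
The goal is to construct a morphism of dg algebras $\Bar k_\xi[H] \to [C,C]$ out of the data $(\varrho, L)$, per Definition~\ref{defhomcohaction}. My plan is to package this data into an auxiliary dg algebra $A$ which (i) receives a \emph{strict} dg algebra map to $[C,C]$ and (ii) admits an acyclic fibration $A \twoheadrightarrow k_\xi[H]$. The desired homotopy coherent action is then obtained by lifting the augmentation $\Bar k_\xi[H] \to k_\xi[H]$ through this acyclic fibration using cofibrancy of the bar resolution in the standard model structure on dg algebras, and post-composing with $A \to [C,C]$.

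Concretely, first replace $\xi$ by a cohomologous normalized cocycle, so that $\xi(e,-) = \xi(-,e) = 1$; combined with the centrality of $\tau \in G$ and $\pi(\tau) = e_H$, this makes $\langle\tau\rangle$ central in $k_{\pi^*\xi}[G]$. Define
\begin{align}
A := k_{\pi^*\xi}[G] \otimes k[\ell]/(\ell^2)
\end{align}
with $\ell$ a central degree-$1$ generator and differential determined as a graded derivation by $d\ell := \langle\tau\rangle - 1$. The hypotheses on $(\varrho, L)$ are tailored exactly to produce a strict dg algebra map $A \to [C,C]$, $\langle g\rangle \mapsto \varrho(g)$, $\ell \mapsto L$: centrality of $L$ with respect to the image of $\varrho$ ensures multiplicativity, and $dL = \varrho(\tau) - \id_C$ ensures compatibility with differentials.

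Next, sending $\langle g\rangle \mapsto \langle \pi(g)\rangle$ and $\ell \mapsto 0$ defines a dg algebra map $A \to k_\xi[H]$ (the cocycle compatibility is tautological since $\pi^*\xi = \pi^*\xi$). As a chain complex, $A$ is the two-term complex
\begin{align}
k_{\pi^*\xi}[G]\cdot \ell \;\xrightarrow{\,\cdot(\langle\tau\rangle - 1)\,}\; k_{\pi^*\xi}[G] \ .
\end{align}
Choosing any set-theoretic section $H \to G$ exhibits $k_{\pi^*\xi}[G]$ as a free module over the central subalgebra $k[\langle\tau\rangle, \langle\tau\rangle^{-1}]$, so $\langle\tau\rangle - 1$ is a non-zero-divisor. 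Therefore $H_1(A) = 0$ and $H_0(A) \cong k_{\pi^*\xi}[G]/(\langle\tau\rangle - 1) \cong k_\xi[H]$, so $A \to k_\xi[H]$ is a surjective quasi-isomorphism, i.e.\ an acyclic fibration of dg algebras.

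Since $\Bar k_\xi[H]$ is cofibrant and the augmentation $\Bar k_\xi[H] \xrightarrow{\,\simeq\,} k_\xi[H]$ is a weak equivalence, the lifting axiom produces a dg algebra map $\Bar k_\xi[H] \to A$, unique up to homotopy under the augmentation. Composing with $A \to [C,C]$ yields the desired homotopy coherent $\xi$-projective representation of $H$ on $C$; canonicity reduces to the uniqueness up to homotopy of the lift. I expect the main substantive step to be the construction of $A$ and the verification that $A \to k_\xi[H]$ is an acyclic fibration; once this is done, all homotopy-coherence bookkeeping is absorbed by formal model-categorical lifting. Note that the rank-one hypothesis on the central subgroup is precisely what allows $A$ to be a two-term complex, so a generalization to higher-rank central extensions would require replacing $A$ by a Koszul-type resolution.
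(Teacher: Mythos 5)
Your reduction to a lifting problem against an acyclic fibration $A \twoheadrightarrow k_\xi[H]$ is an appealing idea, but there is a genuine gap at the central step: the assignment $\langle g\rangle \mapsto \varrho(g)$, $\ell \mapsto L$ does \emph{not} define an algebra map $A \to [C,C]$ for your $A = k_{\pi^*\xi}[G]\otimes k[\ell]/(\ell^2)$. Since $\ell^2=0$ in $A$, well-definedness requires $L\circ L = 0$ in $[C,C]_2$, and this is neither a hypothesis of the Proposition nor true in general; in particular, for the homotopy produced in Proposition~\ref{propcommutinghomotopy} (inserting ribbon twists), to which this Proposition is applied in the proof of Theorem~\ref{thmsl2z}, there is no reason for the square of the chain homotopy to vanish. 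You verify multiplicativity only against the commutation relations $\ell a = a\ell$ and compatibility with $d$, but never address the relation $\ell^2=0$. The gap is not easily patched within your framework: if you drop the relation and freely adjoin $\ell$ (or adjoin it as a central polynomial variable), the map to $[C,C]$ becomes well defined, but $A \to k_\xi[H]$ is no longer a quasi-isomorphism --- e.g.\ in the central polynomial case $d(\ell^2)=(\langle\tau\rangle-1)\ell-\ell(\langle\tau\rangle-1)=0$, so $\ell^2$ generates nontrivial $H_2$. Repairing this forces you to adjoin higher-degree generators killing these classes and, correspondingly, to specify where they go in $[C,C]$ --- which is exactly the homotopy-coherence bookkeeping you claimed is ``absorbed by formal model-categorical lifting.''

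For contrast, the paper avoids ever forming $L\circ L$: it fixes a set-theoretic section $s: H \to G$ with classifying cocycle $\alpha \in Z^2(H;\mathbb{Z})$, builds the map $\Bar k_\xi[H] \to [C,C]$ simplex by simplex, and assigns to $1$-simplices the ``$\alpha(h_1,\dots,h_n)$-th power of $L$'' in the sense of \emph{concatenation} of homotopies (a degree-$1$ element with differential $\varrho(\tau)^{\alpha}-1$, essentially $L\sum_i\varrho(\tau)^i$), not the composition product in $[C,C]$. The cocycle identity for $\alpha$ then makes all triangles of these homotopies commute strictly, so all higher coherence data can be taken to be identities. Your remaining steps (normalization of $\xi$, centrality of $\langle\tau\rangle$, the identification $k_{\pi^*\xi}[G]/(\langle\tau\rangle-1)\cong k_\xi[H]$, the non-zero-divisor argument, and the lifting against an acyclic fibration using cofibrancy of the bar resolution) are all correct; the proof would go through if you could either justify $L^2=0$ (you cannot, from the stated hypotheses) or replace $A$ by a quasi-free resolution together with the corresponding explicit higher homotopies in $[C,C]$.
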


The proof of this technical result will require the inductive construction of a simplicial map that will follow a standard procedure that we recall now:

\begin{lemma}\label{lemmaconstructionsimpmap}
	Let $X$ and $Y$ be simplicial vector spaces. Suppose we are given a family $\phi_n : X_n \to Y_n$ of linear maps that is characterized
	 inductively in the following way:
	\begin{xenumerate}
		
		\item $\phi_0 : X_0 \to Y_0$ is just an arbitrary linear map.
		
		\item For some $n\ge 1$, suppose the linear maps $\phi_p : X_p \to Y_p$ for $0\le p\le n-1$ are already  such that they satisfy $f^* \phi_q \sigma = \phi_p f^* \sigma$ for $f \in \Delta(p,q)$ and $\sigma \in X_q$ for $0\le p,q \le n-1$. 
		Furthermore,  the $\phi_n : X_n \to Y_n$ are given as follows: \begin{enumerate}
			\item[{\normalfont (a)}] If $\sigma \in X_n$ is degenerate, then there is a
			unique iterated degeneracy operator $S=s_{j_1} \dots s_{j_\ell}$ with $j_1>\dots>j_\ell$ and a unique non-degenerate $n-\ell$-simplex $\tau$ such that 
			 $\sigma = S \tau$.
			Then set $\phi_n(\sigma) = S \phi_{n-\ell}(\tau)$. 
			\item[{\normalfont (b)}] If $\sigma$ is non-degenerate, then denote by $\phi_{n-1}(\partial \sigma) : k[ \partial \Delta^n  ] \to Y$ the simplicial map that we obtain by evaluation of $\phi_{n-1}$ on the faces on $\sigma$ (the fact that this map is well-defined is due to the fact that the $\phi_p$ respect the simplicial operators for $0\le p\le n-1$). 
			Now  $\phi_n(\sigma)$ is given as a lift
			\begin{equation}
			\begin{tikzcd}[column sep=large, row sep=large]
			k[\partial \Delta^n] \ar[hookrightarrow]{d}\ar{rr}{    \phi_{n-1}(\partial \sigma)        } & &  Y   \\
			k[\Delta^n]  \ar[dashed,swap]{rru}{     \phi_n(\sigma)  } 
			\end{tikzcd},
			\end{equation}
			where the existence of such a lift is an assumption.
		\end{enumerate}
	\end{xenumerate}
	Then the maps $\phi_n : X_n \to Y_n$ are simplicial.
\end{lemma}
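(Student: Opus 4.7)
The plan is to proceed by induction on $n$, verifying at each stage that $\phi_n$ respects every elementary simplicial identity. The base case $n=0$ is vacuous, so assume the identities have been verified for $\phi_0, \dots, \phi_{n-1}$; the task is to check that every elementary face $\partial_j : X_n \to X_{n-1}$ and degeneracy $s_j : X_n \to X_{n+1}$ intertwines with $\phi$ on both sides.

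For face operators on a non-degenerate $\sigma$, the identity $\partial_j \phi_n(\sigma) = \phi_{n-1}(\partial_j \sigma)$ holds automatically: by construction $\phi_n(\sigma)$ is defined as a lift of the simplicial map $\phi_{n-1}(\partial \sigma) : k[\partial \Delta^n] \to Y$ along the boundary inclusion $k[\partial \Delta^n] \hookrightarrow k[\Delta^n]$, and commutativity of the lifting triangle gives the relation face-by-face. In the degenerate case, write $\sigma = S\tau$ with $S = s_{j_1} \dots s_{j_\ell}$ in canonical order and $\tau$ non-degenerate, so that by definition $\phi_n(\sigma) = S \phi_{n-\ell}(\tau)$, giving $\partial_j \phi_n(\sigma) = \partial_j S \phi_{n-\ell}(\tau)$. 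I would then use the simplicial identities between faces and degeneracies to rewrite $\partial_j S = S' \delta$, where $S'$ is again an iterated degeneracy and $\delta$ is either the identity (if $\partial_j$ cancels one of the $s_{j_k}$) or a single face operator. The inductive hypothesis turns $\delta \phi_{n-\ell}(\tau)$ into $\phi(\delta \tau)$ at the appropriate level, and the same simplicial identities show $\partial_j \sigma = S' \delta \tau$. Aligning the two Eilenberg-Zilber decompositions then yields the desired equality.

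For degeneracy operators, if $\sigma$ is non-degenerate, then $s_j \sigma$ has canonical Eilenberg-Zilber decomposition $s_j \cdot \sigma$, so $\phi_{n+1}(s_j \sigma) = s_j \phi_n(\sigma)$ directly from the definition. If $\sigma = S\tau$ is degenerate in canonical form, then $s_j S$ can be reordered into canonical form $\tilde{S}$ using only the simplicial identities among degeneracies, leaving $\tau$ untouched; the Eilenberg-Zilber decomposition of $s_j \sigma$ is therefore $\tilde{S} \tau$, and the definition gives $\phi_{n+1}(s_j \sigma) = \tilde{S} \phi_{n-\ell}(\tau) = s_j S \phi_{n-\ell}(\tau) = s_j \phi_n(\sigma)$.

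The main technical hurdle is the face-on-degenerate case, because if $\delta$ is a face operator then $\delta \tau$ may itself be degenerate, forcing a further round of Eilenberg-Zilber reduction when one compares the canonical decompositions of $\partial_j \sigma$ arising on the two sides. All such manipulations ultimately reduce to instances of the simplicial identities holding in the target $Y$, combined with the uniqueness of the Eilenberg-Zilber decomposition, so the induction closes and the resulting family $(\phi_n)_{n \ge 0}$ is simplicial.
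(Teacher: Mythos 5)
Your argument is correct and is essentially the worked-out version of the paper's proof, which simply defers the same case analysis to Hirschhorn's Lemmas 15.8.3--4 on how face and degeneracy operators interact with the unique Eilenberg--Zilber decomposition. All the steps you identify (the lifting triangle handling faces of non-degenerate simplices, reordering of degeneracy words, and the extra Eilenberg--Zilber reduction when a face of the non-degenerate part $\tau$ is itself degenerate, which is where the inductive hypothesis on $\phi_{n-\ell}$ enters) are exactly the content of those cited lemmas.
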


The proof of the Lemma is straightforward if one takes the statements \cite[Lemma~15.8.3\&4]{Hirschhorn} on de\-ge\-neracy operators into account.

%\begin{proof}
%	The statement about writing a degenerate simplex uniquely as an ordered iterated degeneracy operator acting on a non-degenerate simplex is
%	\cite[Lemma~15.8.4]{Hirschhorn}.
%	Now for an $n$-simplex $\sigma$ and $0\le j\le n$ we have by definition $\phi_{n+1}(s_j \sigma)=s_j \phi_{n+1}(\sigma)$ if $\sigma$ is non-degenerate. If $\sigma$ is degenerate, we have $\sigma = S \tau$ for a unique ordered iterated degeneracy operator $S$ and a unique non-degenerate $n-\ell$-simplex $\tau$. Then $s_jS=S'$ for a unique ordered iterated degeneracy operator $S'$ \cite[Lemma~15.8.3]{Hirschhorn} and
%	\begin{align}
%	\phi_{n+1} (s_j \sigma)=\phi_{n+1}(s_j S\tau)=\phi_{n+1}(S'\tau)=S'\phi_{n-\ell}(\tau)=s_jS\phi_{n-\ell}(\tau)=s_j\phi_{n}(S\tau)=s_j\phi_n(\sigma) \ . 
%	\end{align}
%	This ensures that the $\phi_n$ commute with the degeneracy operators. By construction they also commute with the face operators.
%\end{proof}

Before we prove Proposition~\ref{propohtpactiondescend}, we need to introduce some notation:		Fix   a set-theoretic section $s : H \to G$ of $\pi$.
		The deviation of $s$ from being a group morphism is described by the classifying cocycle $\alpha \in Z^2(H;\mathbb{Z})$ of the central extension $0\to \mathbb{Z}\to G\to H\to 0$; i.e.\
		\begin{align} s h_1 sh_2 = \tau^{\alpha(h_1,h_2)} s(h_1h_2) \quad \text{for}\quad h_1,h_2 \in H \ . 
		\end{align} %We will need a notation for situations where we want to apply this relation multiple times:
		For three elements in $H$, for example, we have
		\begin{align}
		s h_1 sh_2 sh_3 = \tau^{\alpha(h_1,h_2)} s(h_1h_2) sh_3 = \tau^{ \alpha(h_1,h_2) + \alpha(h_1h_2,h_3)   }   s(h_1h_2h_3)   
		\end{align}
		by applying the cocycle first to $h_1$ and $h_2$ and then to $h_1h_2$ and $h_3$. Alternatively, we could have applied it to $h_2$ and $h_3$ and then to $h_1$ and $h_2h_3$. The cocycle condition
		\begin{align} \alpha(h_1,h_2)+\alpha(h_1h_2,h_3) = \alpha(h_2,h_3)+\alpha(h_1,h_2h_3) \label{cocycleeqn}
		\end{align} tells us that both ways yield the same result.
		We will therefore denote any of the sides of \eqref{cocycleeqn} by $\alpha(h_1,h_2,h_3)$. More generally, we can define $\alpha(h_1,\dots,h_n)$ for $n$ elements in $H$ such that 
		\begin{align}
		s h_1 \dots  sh_n = \tau^{\alpha(h_1,\dots,h_n)}     s(h_1 \dots h_n) \ .  \label{cocycleeqn2}
		\end{align}
Such a multi-element notation will also be used for the cocycle $\xi \in Z^2(H;k^\times)$ describing the projectivity.

\begin{proof}[\textsl{Proof of Proposition~\ref{propohtpactiondescend}}]
	Following Definition~\ref{defhomcohaction} we have to build a map $\varphi : \Bar k_\xi [H] \to [C,C]$ of differential graded algebras. We will equivalently describe it as a map of simplicial algebras and construct the underlying simplicial map following Lemma~\ref{lemmaconstructionsimpmap}. 
	Additionally, we will make sure that in every degree the algebra structure is respected such that we actually obtain a map of simplicial algebras.

First we set $\varphi(h)=\varrho(sh)$ for $h\in H$ and our fixed set-theoretic section $s : H \to G$
(up to homotopy, the construction will not depend on the choice of the section).
 This assignment extends to an algebra map
		\begin{align}
	\Bar_0 k_\xi [H]=F k[H] \to \Ch(C,C) 
		\end{align}
		because $\Bar_0 k_\xi[H]$ is freely generated as an algebra by the elements of $H$.
		This way,  we obtain the definition of $\varphi$ on $0$-simplices.
		
		Next, we consider a 1-simplex $\sigma \in \Bar_1 k_\xi[H]=F^2 k[H]$ of the form $\sigma =( (h_1) \dots (h_n)  )$, where we use the bracket notation explained on page~\pageref{bracketnotationpage}.
		These freely generate $F^2 k[H]$ as an algebra.
		We can see $\sigma$ as a path in the bar construction $\Bar k_\xi [H]$  from
		$\partial_0 \sigma = (h_1) \dots (h_n)$ to $\partial_1 \sigma = \xi(h_1,\dots,h_n) (h_1 \dots h_r)$, where $\xi(h_1,\dots,h_n)$
		is the multi-element notation for the cocycle $\xi$ just introduced. Therefore, we depict the 1-simplex $\sigma$ by
			\begin{equation}
					\centering
					\includegraphics[width=0.3\textwidth]{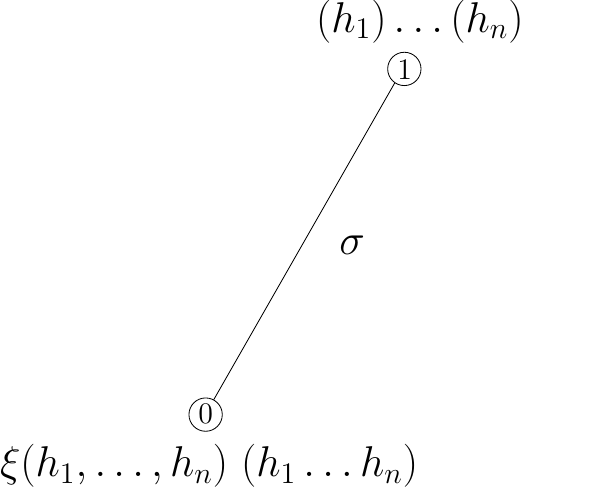}.
					%	\caption{For the self-sewing.}
				\end{equation}
				If $\sigma$ is degenerate, then Lemma~\ref{lemmaconstructionsimpmap} tells us how to define $\varphi$ on it. Suppose now that $\sigma$ is non-degenerate.
	By definition of $\varphi$ on 0-simplices, we have
		\begin{align}
		\varphi ( \partial_0 \sigma) &= \varphi(h_1) \dots \varphi(h_n) = \varrho(sh_1) \dots \varrho(sh_n) \\&= \xi(h_1,\dots,h_n) \varrho(sh_1 \dots sh_n)\\&=\xi(h_1,\dots,h_n)\varrho(\tau)^{\alpha(h_1,\dots,h_n)} \varrho(s(h_1 \dots h_n)) \ ,\quad  \text{see} \ \eqref{cocycleeqn2}\ ;\\ 
		\varphi(\partial_1 \sigma) &= \xi(h_1,\dots,h_n) \varrho(  s(h_1 \dots h_n)  ) \ . 
		\end{align}
		In summary, we arrive at
		\begin{align}
		\varphi ( \partial_0 \sigma) = \varrho(\tau)^{\alpha(h_1,\dots,h_n)} \varphi(\partial_1 \sigma) \ . 
		\end{align} Now we can assign to $\sigma$ the homotopy $L^{\alpha(h_1,\dots,h_n)} \varrho(\partial_1 \sigma)$, i.e.\ the $\alpha(h_1,\dots,h_n)$-th power of $L$ combined with the identity homotopy of $\varrho(\partial_1 \sigma)$.
		Since
		$L^{\alpha(h_1,\dots,h_n)} \varrho(\partial_1 \sigma)=\varrho(\partial_1 \sigma) L^{\alpha(h_1,\dots,h_n)}$
		by assumption, we will suppress the identity homotopy in the notation.
		%, i.e.\ a path in the mapping complex.
This assignment extends to an algebra map
\begin{align}
\Bar_1 k_\xi[H] = F^2 k[H] \to \Ch(C\otimes N_*(\Delta^1;k), C) \ 
\end{align}
because the 1-simplices that we considered freely generate $\Bar_1 k_\xi[H]$ as an algebra.
This concludes the definition of $\varphi$ on 1-simplices.

		Consider now a 2-simplex $\sigma \in \Bar_2 k_\xi[H]=F^3 k[H]$ of the form
		\begin{align}
		\sigma = \left(   \left(  (h_{1,1}) \dots (h_{1,m_1})   \right) \dots \left(  (h_{n,1}) \dots (h_{n,m_n})   \right)  \right) \ ;
		\end{align}
		again these freely generate $\Bar_2 k_\xi[H]$ as an algebra. We will now prove that the homotopies that we obtain by evaluation of $\varphi$ on the boundary of $\sigma$ provide a strictly commuting triangle. This allows us to define $\varphi$ on $\sigma$ as the identity 2-homotopy. Then we extend multiplicatively and hence obtain the definition of $\varphi$ on 2-simplices. This way we still follow the construction principle from Lemma~\ref{lemmaconstructionsimpmap} and make sure that $\varphi$ respects the multiplicative structure. Since to the 2-simplices we have just assigned the identity, the definition on higher simplices can be completed trivially by assigning again identities, thereby completing the definition of $\varphi$ as a map of simplicial algebras. 
		
		It still remains to show that the homotopies assigned by $\varphi$ to the boundary of $\sigma$ form a strictly commuting triangle.
		To this end, we depict the faces and vertices of $\sigma$ as follows: 
		\begin{equation}
			\centering
			\includegraphics[width=0.9\textwidth]{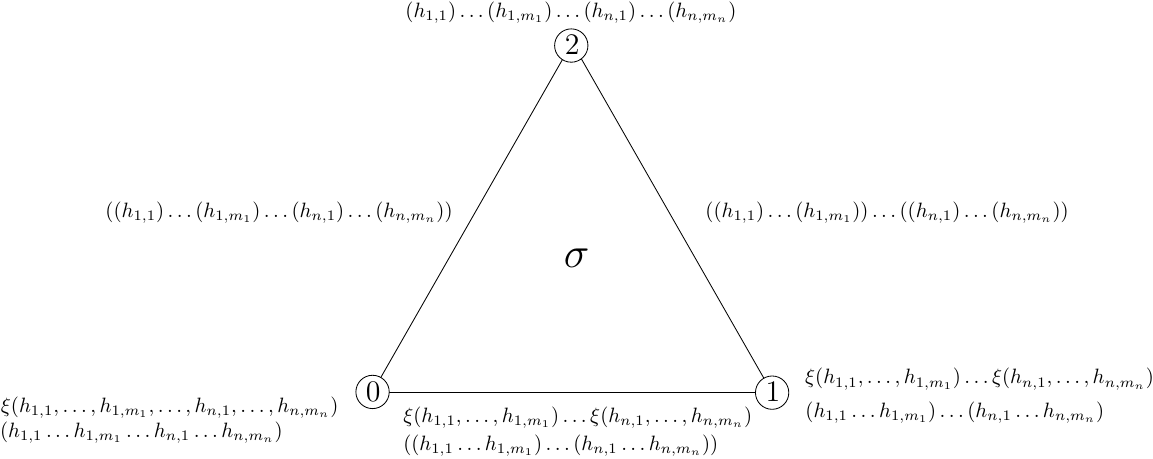}
		\end{equation}
		By the definition of $\varphi$ on 0- and 1-simplices the image of the boundary of $\sigma$
		under $\varphi$ is given by:
				\begin{equation}\label{eqnstrictlycommtriangle}
					\centering
					\includegraphics[width=0.9\textwidth]{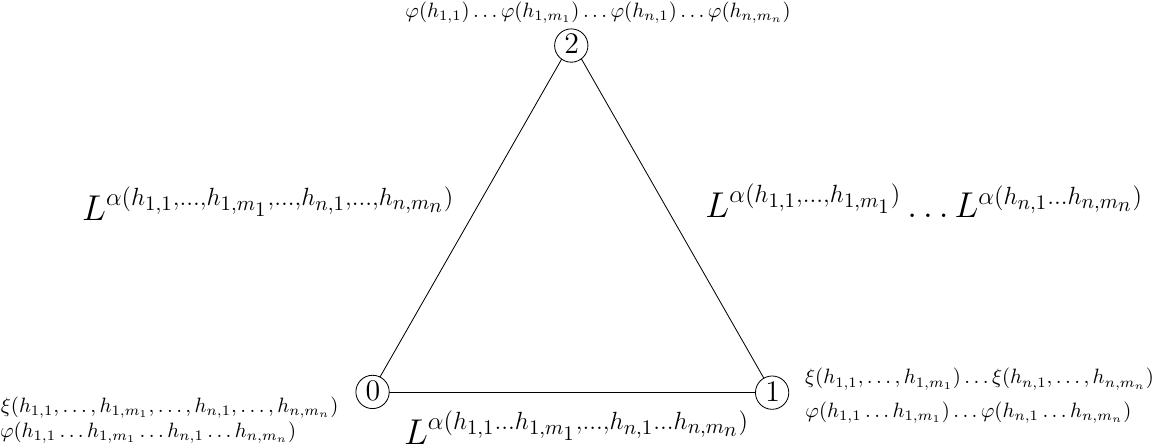}
				\end{equation}
				Since $L$ commutes with $\varphi$ by assumption, we are allowed to omit the identity homotopies in the notation.
		By the cocycle condition on $\alpha$, we have
		\begin{align}
		\alpha( h_{1,1}, \dots ,h_{1,m_1},  \dots , h_{n,1}, \dots ,h_{n,m_n}) = &\alpha( h_{1,1} \dots h_{1,m_1},  \dots , h_{n,1}\dots h_{n,m_n})\\
		&+ \alpha ( h_{1,1}, \dots, h_{1,m_1}) +\dots+\alpha ( h_{n,1}, \dots, h_{n,m_n})\ , 
		\end{align}
		which allows us to conclude that \eqref{eqnstrictlycommtriangle} forms a strictly commutative triangle.
		This finishes the proof.
\end{proof}

\subsection{Homotopy coherent projective $\SL(2,\mathbb{Z})$-action on the Hochschild complex of a modular category}
Having discussed the notion of a homotopy coherent projective action and some tools for its construction, we now finally exhibit a homotopy coherent projective action of the mapping class group of the torus on the Hochschild complex of a modular category.

The central non-homotopical ingredient will be the Lyubashenko-Majid action on the canonical coend:
Let $\cat{C}$ be a modular category and $\mathbb{F}$ the canonical (ordinary) coend. As one of the main results of \cite{lubamajid,lubacmp,luba}, Lyubashenko and Majid construct a projective action of the mapping class group of the punctured torus, i.e.\ the braid group on three strands $B_3 = \langle s,t,r \, | \, (st)^3=s^2 , s^4 =r \rangle$,
 on $\mathbb{F}$. 
 These authors give explicit automorphisms of $\mathbb{F}$ for each generator such that $r$ is sent to the inverse twist $\theta_\mathbb{F}^{-1}$ of $\mathbb{F}$.
\label{pagerefribbontwistaction} The $B_3$-action on $\mathbb{F}$ descends to an action of the mapping class group of the torus $\SL(2,\mathbb{Z}) = \langle s,t \, | \, (st)^3=s^2 , s^4 =1\rangle$ on $\cat{C}(I,\mathbb{F})$,
  as follows from the naturality of the twist and $\theta_I=\id_I$. 
  
  \begin{remark}\label{remproj}
  The projective $B_3$-action on $\mathbb{F}$ can be turned into a linear one since $H^2(B_3;k^\times)=0$. However, then the generator $r$ might not be sent to the inverse twist any longer. 
  Therefore, we refrain from getting rid of the projectivity.
  \end{remark}

Using 
the following Lemma the cocycle description of the projectivity of the representation of $B_3$ on $\mathbb{F}$ can be 
simplified.

\begin{lemma}\label{lemmapulbackcocycle}
	Let $\pi : G \to H$ an epimorphism of groups and $X$ and $Y$ objects in a $k$-linear category such that $\cat{C}(Y,X)\neq 0$.
	Then any projective representation $\varphi :G \to \P \Aut X$ of $G$ on $X$ 
	for which the composition
	\begin{align}
	G \xrightarrow{\ \varphi\ } \P \Aut X \to \P \Aut \cat{C}(Y,X)
	\end{align} is trivial on $\ker \pi$ 
	can be described by a family of automorphisms $\varrho(g)$ for $g\in G$ such that the cocycle controlling the projectivity with respect to these maps is the pullback $\pi^* \xi$ of a cocycle $\xi \in Z^2(H;k^\times)$ along $\pi$. 
\end{lemma}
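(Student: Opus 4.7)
The plan is to rescale an arbitrary set-theoretic lift of $\varphi$ by a scalar function read off from the induced action on the nonzero space $V:=\cat{C}(Y,X)$. Denote by $\psi:G\to\P\Aut V$ the composition of $\varphi$ with the post-composition map $\P\Aut X\to\P\Aut V$. By hypothesis $\psi$ is trivial on $\ker\pi$, so by the universal property of the quotient it factors uniquely as $\psi=\bar\psi\circ\pi$ for some projective representation $\bar\psi:H\to\P\Aut V$.

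First I would choose arbitrary (normalized) set-theoretic lifts on both sides. Pick any $\tilde\varphi:G\to\Aut X$ lifting $\varphi$, with cocycle $c\in Z^2(G;k^\times)$ defined by $\tilde\varphi(g)\tilde\varphi(h)=c(g,h)\tilde\varphi(gh)$. Post-composition gives a group homomorphism $\Aut X\to\Aut V$ sending scalars to scalars, so $g\mapsto\tilde\varphi(g)\circ(-)$ is a lift of $\psi$ with the \emph{same} cocycle $c$. Independently, pick a lift $\Psi:H\to\Aut V$ of $\bar\psi$ with cocycle $\xi\in Z^2(H;k^\times)$; then $\Psi\circ\pi:G\to\Aut V$ is a lift of $\psi$ with cocycle $\pi^*\xi$.

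Next I would compare the two lifts of $\psi$ on $V$. Because $V\neq 0$, the kernel of $\Aut V\to\P\Aut V$ is exactly $k^\times\cdot\id_V$, so any two lifts of the same projective representation on $V$ differ by a $k^\times$-valued function. Hence there is $\mu:G\to k^\times$ with $\tilde\varphi(g)|_V=\mu(g)\,\Psi(\pi g)$ in $\Aut V$. Substituting into the cocycle identity $\tilde\varphi(g)|_V\tilde\varphi(h)|_V=c(g,h)\,\tilde\varphi(gh)|_V$ and cancelling the invertible factor $\Psi(\pi(gh))$ yields $c(g,h)\mu(gh)=\mu(g)\mu(h)\,\pi^*\xi(g,h)$, i.e.\ $c$ and $\pi^*\xi$ are cohomologous via the coboundary $\delta\mu$.

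To finish I would set $\varrho(g):=\mu(g)^{-1}\tilde\varphi(g)\in\Aut X$; this is still a set-theoretic lift of $\varphi$. A short direct computation from $\tilde\varphi(g)\tilde\varphi(h)=c(g,h)\tilde\varphi(gh)$ combined with the cohomology relation above gives $\varrho(g)\varrho(h)=\pi^*\xi(g,h)\,\varrho(gh)$, so the cocycle controlling the projectivity of $\varrho$ is precisely $\pi^*\xi$, as required. The only genuine subtlety is the comparison step: it uses $\cat{C}(Y,X)\neq 0$ essentially, because it is only on a nonzero target that two lifts of a projective representation are comparable by a scalar function and hence that one obtains the rescaling $\mu$; the rest is formal.
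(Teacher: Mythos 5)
Your proof is correct and follows essentially the same strategy as the paper's: rescale an arbitrary lift of $\varphi$ by scalars obtained from comparing with a normalized lift of the induced projective action on the nonzero space $\cat{C}(Y,X)$, which is exactly where the hypothesis $\cat{C}(Y,X)\neq 0$ enters. The only (cosmetic) difference is that you factor $\psi$ through $H$ at the outset and read off $\xi$ as the cocycle of a lift of $\bar\psi$, whereas the paper first shows the cocycle $\nu$ on $G$ is trivial on $G\times\ker\pi$ and $\ker\pi\times G$ and then descends it to $H$ via a set-theoretic section.
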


\begin{proof}
	The projective representation of $G$ on $X$ is a group morphism $\varphi : G \to \P \Aut X$. Its concatenation $\psi := \cat{C}(Y,-)\circ \varphi  : G \to \P \Aut \cat{C}(Y,X)$ with $\cat{C}(Y,-)$ sends $\ker \pi$ to $1$. Now we choose a lift 
	for each element in $\P \Aut \cat{C}(Y,X)$ that sends the unit to the identity of $\cat{C}(Y,X)$ and denote the lift 
	of $\psi(g)$ for $g\in G$ by $\widetilde \psi(g)$.
	If $\widehat \varphi(g)$ is \emph{any} lift of $\varphi(g)$, then $\cat{C}(Y,-)$ maps $\widehat \varphi(g)$ to $c_g \widetilde \psi(g)$ for a unique invertible scalar $c_g\in k^\times$. We now set $\widetilde \varphi(g) := \widehat \varphi(g)/c_g$ and thereby ensure that $\cat{C}(Y,-)$ maps $\widetilde \varphi(g)$ to $\widetilde \psi(g)$.
	If we let $\nu \in Z^2(G;k^\times)$ be the cocycle describing the projectivity of $\varphi$ with respect to the representatives $\widetilde \varphi(g)$, we find by definition
	\begin{align}
	\widetilde \varphi(g) \widetilde \varphi(g') = \nu(g,g') \widetilde \varphi(gg')
	\end{align} for $g,g'\in G$. When applying the functor $\cat{C}(Y,-)$ in the special case $g' \in \ker \pi$, we obtain
	\begin{align}
	\widetilde \psi(g) = \widetilde \psi(g) \widetilde \psi(g') = \nu(g,g') \widetilde \psi(gg')=\nu(g,g') \widetilde \psi(g)
	\end{align} by the choice of our lifts and the assumption that $\psi$ sends $\ker \pi$ to the unit.
	Since $\cat{C}(Y,X)\neq 0$, we conclude $\nu(g,g')=1$. Hence, $\nu$ is trivial on $G\times \ker \pi$ and similarly on $\ker \pi \times G$. Now a direct computation shows that $\xi(h,h'):=\nu(s(h),s(h'))$ for $h,h'\in H$ and any set-theoretic section $s:H\to G$ of $\pi$ defines a 2-cocycle $\xi$
	 on $H$ with $\pi^* \xi=\nu$.
\end{proof}

If we apply this Lemma to the epimorphism $B_3 \to \SL(2,\mathbb{Z})$, $X=\mathbb{F}$ and $Y=I$, then thanks to $\cat{C}(I,\mathbb{F})\neq 0$ and the fact that the $B_3$-action descends to an $\SL(2,\mathbb{Z})$-action on $\cat{C}(I,\mathbb{F})$, we conclude that the projectivity of the $B_3$-action on $\mathbb{F}$ can be described 
by the pullback of a cocycle $\xi \in Z^2(\SL(2,\mathbb{Z});k^\times)$.
By looking at the proof of Lemma~\ref{lemmapulbackcocycle} 
we see that we can still arrange 
that the generator $r$ 
of the kernel of the projection $B_3 \to \SL(2,\mathbb{Z})$
 is sent to 
 $\theta_\mathbb{F}^{-1}$.

Let us now state the main Theorem:

\begin{theorem}\label{thmsl2z}
	The Hochschild complex $\lint^{X \in \Proj \cat{C}} \cat{C}(X,X)$ of a modular category $\cat{C}$
	carries a homotopy coherent projective action of the mapping class group $\SL(2,\mathbb{Z})$ of the torus which is induced in a canonical way by the action of the braid group on three strands on the canonical coend of $\cat{C}$.
	\end{theorem}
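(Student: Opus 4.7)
The plan is to combine the identification of the Hochschild complex with $\cat{C}(I,\cof\mathbb{F})$ from Theorem~\ref{theoderivedcoendviaobject} (applicable because a modular category is pivotal and we work over an algebraically closed field), the Lyubashenko-Majid projective $B_3$-action on the canonical coend $\mathbb{F}$, and the descent criterion Proposition~\ref{propohtpactiondescend} for passing from a representation of a central extension to a homotopy coherent representation of the quotient along $\pi:B_3\to\SL(2,\mathbb{Z})$.

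First I would lift the Lyubashenko-Majid projective action on $\mathbb{F}$ to a strict projective action on the resolution $\cof\mathbb{F}:=\flint^{X\in\Proj\cat{C}} X\otimes X^\vee$ from Corollary~\ref{corresocoend2}. The generators of $B_3$ act on $\mathbb{F}$ by automorphisms built from the braiding, duality, and ribbon structure of $\cat{C}$, all of which are dinatural in the index variable of the coend; the same recipe lifts them, simplicial level by simplicial level, to chain automorphisms of $\cof\mathbb{F}$ satisfying the same projective relations. Applying $\cat{C}(I,-)$ then yields a strict projective $B_3$-action on $\cat{C}(I,\cof\mathbb{F})$, which by Theorem~\ref{theoderivedcoendviaobject} is equivalent to the Hochschild complex. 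By Lemma~\ref{lemmapulbackcocycle} applied to $\pi$, to $X=\mathbb{F}$, and to $Y=I$ (noting that the $B_3$-action already descends to an $\SL(2,\mathbb{Z})$-action on $\cat{C}(I,\mathbb{F})$ thanks to naturality of $\theta$ and $\theta_I=\id_I$), the projectivity is governed by $\pi^*\xi$ for a suitable $\xi\in Z^2(\SL(2,\mathbb{Z});k^\times)$.

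To feed the result into Proposition~\ref{propohtpactiondescend}, I need a chain homotopy $L$ between the action of the generator $r\in\ker\pi$ and the identity on $\cat{C}(I,\cof\mathbb{F})$ that commutes strictly with every $\varrho(g)$. The Lyubashenko-Majid action sends $r$ to $\theta_\mathbb{F}^{-1}$, and naturality of the ribbon twist combined with $\theta_I=\id_I$ shows that post-composition with $\theta_{\cof\mathbb{F}}^{-1}$ acts as the identity strictly on $\cat{C}(I,\cof\mathbb{F})$. Since $\theta_{\cof\mathbb{F}}^{-1}$ is itself a chain automorphism of $\cof\mathbb{F}$ lifting $\theta_\mathbb{F}^{-1}$, it differs from the lift $\tilde\varrho(r)$ produced in the previous step by a chain homotopy $\ell$ on $\cof\mathbb{F}$; setting $L:=\cat{C}(I,\ell)$ provides a candidate for the required homotopy.

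The main obstacle is arranging the strict commutation $L\varrho(g)=\varrho(g)L$ for every $g\in B_3$. I would exploit the centrality of $r$ in $B_3$ (which already forces strict commutation of $\tilde\varrho(r)$ with every $\tilde\varrho(g)$ on $\cof\mathbb{F}$) together with the naturality of $\theta$ (which implies strict commutation of $\theta_{\cof\mathbb{F}}^{-1}$ with any endomorphism of $\cof\mathbb{F}$ induced by morphisms in $\cat{C}$). The cleanest route is to adjust the lifts of the generators $s,t$ of $B_3$ so that the induced $\tilde\varrho(r)$ coincides with $\theta_{\cof\mathbb{F}}^{-1}$ strictly on $\cof\mathbb{F}$; then $\varrho(r)=\id$ on $\cat{C}(I,\cof\mathbb{F})$ and $L=0$ trivially satisfies the commutation requirement. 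The technical content is to perform this adjustment without spoiling the projective $B_3$-relations on $\cof\mathbb{F}$, using that the discrepancy between the two lifts is a chain homotopy which can be absorbed into the generator lifts while keeping the projectivity cocycle a pullback from $\SL(2,\mathbb{Z})$. Once $L$ is in place, Proposition~\ref{propohtpactiondescend} delivers the desired canonical homotopy coherent $\xi$-projective $\SL(2,\mathbb{Z})$-action on the Hochschild complex.
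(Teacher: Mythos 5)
Your overall architecture (identify the Hochschild complex with a hom out of $I$ into a model for $\cof\mathbb{F}$, use the Lyubashenko--Majid $B_3$-action, reduce the cocycle via Lemma~\ref{lemmapulbackcocycle}, descend along $B_3\to\SL(2,\mathbb{Z})$ via Proposition~\ref{propohtpactiondescend}) matches the paper, but there is a genuine gap at the very first substantive step: you assume that the Lyubashenko--Majid automorphisms of $\mathbb{F}$ lift \emph{strictly}, level by level, to chain automorphisms of the resolution $\cof\mathbb{F}=\flint^{X\in\Proj\cat{C}}X\otimes X^\vee$ satisfying the same projective relations. The generator $t$ (the twist) does lift, but the generator $s$ is defined via the universal property of the \emph{underived} coend $\mathbb{F}$ (it involves the Hopf pairing and coproduct on $\mathbb{F}$, which are dinatural structure maps out of $\mathbb{F}\otimes\mathbb{F}$ and into $\mathbb{F}\otimes\mathbb{F}$, not levelwise maps of the bar resolution). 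Lifting such maps to a projective resolution is only possible up to homotopy, and the lifts are only unique up to homotopy, so the $B_3$-relations and the cocycle identity $\varrho(g)\varrho(h)=\pi^*\xi(g,h)\varrho(gh)$ would hold only up to homotopy. Proposition~\ref{propohtpactiondescend} requires a \emph{strict} $\pi^*\xi$-projective action as input, so it cannot be applied. The same problem undermines your final step: replacing the lift of $s$ by a homotopic map in order to force $\tilde\varrho(r)=\tilde\varrho(s)^4=\theta_{\cof\mathbb{F}}^{-1}$ on the nose is not a controlled operation, and there is no reason the adjusted lifts would still satisfy the relations strictly. (Note also that if your plan worked you would obtain a \emph{strict} projective $\SL(2,\mathbb{Z})$-action with $L=0$, which is stronger than the theorem and is exactly what the obstruction prevents.)

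The paper's proof avoids this by choosing a different model: Proposition~\ref{propalternativcomplex} replaces the Hochschild complex by $\lint^{P\in\Proj\cat{C}}\cat{C}(I,P)\otimes\cat{C}(P,\mathbb{F})$, in which the honest object $\mathbb{F}$ (not a resolution of it) appears. Postcomposition with the genuine automorphisms of $\mathbb{F}$ then gives a strict $\pi^*\xi$-projective $B_3$-action. The price is that $r$ no longer acts by the identity: naturality of the twist gives $\theta_\mathbb{F}\circ f=f\circ\theta_P$ for $f\colon P\to\mathbb{F}$, and $\theta_P\neq\id_P$ for a general projective $P$, so one genuinely needs the explicit simplicial homotopy of Proposition~\ref{propcommutinghomotopy}, which inserts the twists $\theta_{P_j}$ of the intermediate projective objects and transports them down to $I$ where $\theta_I=\id_I$; this homotopy visibly commutes with postcomposition by any endomorphism of $\mathbb{F}$, which is exactly the hypothesis of Proposition~\ref{propohtpactiondescend}. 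If you want to salvage your route, you should replace your lifting step by this change of model (or prove an analogous statement producing a strict action on some model of $\cat{C}(I,\cof\mathbb{F})$); your observation that postcomposition with $\theta$ is strictly the identity on $\cat{C}(I,-)$ is correct but is not by itself enough.
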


\begin{remark}	\label{remprojthm}We may actually include a statement about the concrete nature of the projectivity of this action: Using Lemma~\ref{lemmapulbackcocycle} we have established that the projectivity of the $B_3$-action on $\mathbb{F}$ is controlled by the pullback of a cocycle $\xi \in Z^2(\SL(2,\mathbb{Z});k^\times)$. The homotopy coherent projective $\SL(2,\mathbb{Z})$-action that we construct will be an action of the bar resolution of the $\xi$-twisted group algebra of $\SL(2,\mathbb{Z})$. 
 \end{remark}
 
 Before going into the proof of Theorem~\ref{thmsl2z}, we discuss one immediate consequence following from the fact that taking finite-dimensional modules over a ribbon factorizable Hopf algebra provides an example of a modular category:
 
 \begin{corollary}\label{corhopfalg}
 There is a canonical homotopy coherent projective $\SL(2,\mathbb{Z})$-action on the Hochschild complex of a ribbon factorizable Hopf algebra.
 \end{corollary}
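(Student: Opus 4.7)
The plan is to deduce the corollary directly from Theorem~\ref{thmsl2z} applied to the representation category of the Hopf algebra, combined with the Agreement Principle of Section~\ref{secderivedcoendsproj}.

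For a ribbon factorizable Hopf algebra $H$, the $k$-linear category $\cat{C}$ of finite-dimensional $H$-modules is a modular category: finiteness is automatic from finite-dimensionality of $H$, rigidity comes from the antipode, the braiding and ribbon twist descend from the $R$-matrix and ribbon element of $H$, and the M\"uger center is trivial precisely because $H$ is factorizable (see e.g.\ the characterizations of non-degeneracy in \cite{shimizumodular}). Theorem~\ref{thmsl2z} therefore applies and yields a canonical homotopy coherent projective $\SL(2,\mathbb{Z})$-action on the categorical Hochschild complex $\lint^{X \in \Proj \cat{C}} \cat{C}(X,X)$; the action is built in a canonical way from the Lyubashenko-Majid projective $B_3$-action on the canonical coend $\mathbb{F}$, which for the category $\cat{C}$ is concretely realized by the coadjoint representation of $H$.

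The Agreement Principle (Theorem~\ref{theoprojectiveforbimodules}, in the form spelled out in Example~\ref{exprojectiveforbimodules}) provides a canonical equivalence
\begin{align}
\lint^{X \in \Proj \cat{C}} \cat{C}(X,X) \simeq CH(H;H)
\end{align}
with the ordinary Hochschild complex of $H$ with coefficients in $H$ viewed as a bimodule over itself. Transporting the homotopy coherent projective $\SL(2,\mathbb{Z})$-action across this equivalence produces the asserted action on the classical Hochschild complex of $H$. Beyond verifying the modularity of $\cat{C}$, no further obstacle arises, since all homotopical work has already been carried out in Theorem~\ref{thmsl2z}; the only point that deserves attention is that the equivalence above is natural enough for transport of structure to be unambiguous, which follows from the fact that the Agreement Principle is implemented by a zig-zag of chain maps that do not interact with the $B_3$-action on $\mathbb{F}$.
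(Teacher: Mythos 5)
Your proposal is correct and follows essentially the same route as the paper, which states this corollary as an immediate consequence of Theorem~\ref{thmsl2z} together with the fact that finite-dimensional modules over a ribbon factorizable Hopf algebra form a modular category, with the identification of the categorical Hochschild complex with $CH(H;H)$ supplied by the Agreement Principle (Example~\ref{exprojectiveforbimodules}). You spell out the modularity check and the transport of structure slightly more explicitly than the paper does, but the argument is the same.
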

 
 When stated in this form, our result is a direct homotopy coherent extension of \cite{svea}.\\[0.5ex]

 For the proof of Theorem~\ref{thmsl2z} we will need a few further technical results. One key step will be to replace the Hochschild complex by an equivalent complex. 
 
 \begin{proposition}\label{propalternativcomplex}
 For any pivotal finite tensor category $\cat{C}$,
  the Hochschild complex of $\cat{C}$ is canonically equivalent to the homotopy coend $\lint^{P\in\Proj \cat{C }}   \cat{C}(I,P) \otimes \cat{C}(P,\mathbb{F})$.
 \end{proposition}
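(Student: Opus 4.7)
The plan is to route both sides of the claimed equivalence through the chain complex $\cat{C}(I,\cof\mathbb{F})$, where $\cof\mathbb{F}$ is a projective resolution of the canonical coend $\mathbb{F}$, using three main ingredients: Theorem~\ref{theoderivedcoendviaobject} for the Hochschild complex, exactness of $\cat{C}(P,-)$ for $P$ projective, and the Yoneda Lemma (Proposition~\ref{propyoneda}).

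First, I would apply Theorem~\ref{theoderivedcoendviaobject} (which uses pivotality and algebraic closedness of $k$ essentially via Lemma~\ref{lemmaprojCY}) to obtain a canonical equivalence
\begin{equation*}
\lint^{X\in\Proj\cat{C}}\cat{C}(X,X) \;\simeq\; \cat{C}(I,\cof\mathbb{F})
\end{equation*}
for any projective resolution $\cof\mathbb{F}$ of $\mathbb{F}$; a concrete choice is $\cof\mathbb{F} = \flint^{X\in\Proj\cat{C}} X\otimes X^\vee$ from Corollary~\ref{corresocoend2}. It then remains to identify the complex on the right hand side of the proposition with $\cat{C}(I,\cof\mathbb{F})$.

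To achieve this, I would first replace $\mathbb{F}$ by its projective resolution inside the derived coend. For every projective $P$, the functor $\cat{C}(P,-)$ is exact and hence sends the quasi-isomorphism $\cof\mathbb{F}\to\mathbb{F}$ to a quasi-isomorphism $\cat{C}(P,\cof\mathbb{F})\to\cat{C}(P,\mathbb{F})$ in $\Ch$. Homotopy invariance of the derived coend (Proposition~\ref{homotopyinvariancepropo}) then produces an equivalence
\begin{equation*}
\lint^{P\in\Proj\cat{C}} \cat{C}(I,P)\otimes \cat{C}(P,\cof\mathbb{F}) \;\xrightarrow{\;\simeq\;}\; \lint^{P\in\Proj\cat{C}} \cat{C}(I,P)\otimes \cat{C}(P,\mathbb{F}).
\end{equation*}
Next I would apply the enriched Yoneda Lemma (Proposition~\ref{propyoneda}) to the linear category $\Proj\cat{C}$ and the functor $H = \cat{C}(I,-):\Proj\cat{C}\to\Ch$, both concentrated in degree zero. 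Evaluated at the projective object $\cof\mathbb{F}_n$ for each $n$, Yoneda yields a canonical equivalence
\begin{equation*}
\lint^{P\in\Proj\cat{C}} \cat{C}(I,P)\otimes \cat{C}(P,\cof\mathbb{F}_n) \;\xrightarrow{\;\simeq\;}\; \cat{C}(I,\cof\mathbb{F}_n),
\end{equation*}
and assembling these level-wise equivalences gives the desired equivalence $\lint^{P}\cat{C}(I,P)\otimes\cat{C}(P,\cof\mathbb{F}) \simeq \cat{C}(I,\cof\mathbb{F})$. Concatenating the three equivalences closes the chain.

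The main technical obstacle I anticipate is the last assembly step: Proposition~\ref{propyoneda} is stated for evaluation at an object of $\cat{C}$, not at a chain complex, so one must verify that applying the natural Yoneda equivalence level by level and totalizing agrees with the output of the derived coend applied directly to the complex-valued bimodule $(P,Q)\mapsto \cat{C}(I,P)\otimes\cat{C}(Q,\cof\mathbb{F})$. This amounts to commuting the realization underlying $\lint^P$ with the totalization of $\cof\mathbb{F}$, which is a Fubini-type manipulation in the spirit of Proposition~\ref{fubinisthmprop} together with naturality of the Yoneda map in its argument; the remaining work is bookkeeping of signs and grading conventions.
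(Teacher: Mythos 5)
Your proposal is correct and follows essentially the same route as the paper: Theorem~\ref{theoderivedcoendviaobject} identifies the Hochschild complex with $\cat{C}(I,\cof\mathbb{F})$, and the remaining identification is exactly the paper's Lemma~\ref{lemyeondaprojective} (replace $\mathbb{F}$ by $\cof\mathbb{F}$ inside the coend using exactness of $\cat{C}(P,-)$ for projective $P$, then apply Yoneda level-wise at the projective objects $\cof_n\mathbb{F}$). The assembly step you flag as the main technical obstacle is handled in the paper by viewing the left-hand side as the total complex of a first-quadrant double complex and running the spectral sequence of the filtration by rows, each row being an equivalence by Proposition~\ref{propyoneda}.
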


 The proof of this Proposition will need the following Lemma:
 
 \begin{lemma}\label{lemyeondaprojective}
 	Let $\cat{C}$ be a finite category. Then for any $X,Y\in\cat{C}$ there is a natural equivalence
 	\begin{align}
 	\lint^{P\in\Proj \cat{C }}   \cat{C}(Y,P) \otimes \cat{C}(P,X) \simeq \cat{C}(Y,\cof X) \ . 
 	\end{align}
 \end{lemma}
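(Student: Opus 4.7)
The plan is to combine the Yoneda Lemma (Proposition~\ref{propyoneda}) with homotopy invariance of the derived coend (Proposition~\ref{homotopyinvariancepropo}). First, I would apply Proposition~\ref{propyoneda} to the category $\Proj \cat{C}$ (viewed as a dg category concentrated in degree zero) and to the functor $H : \Proj \cat{C} \to \Ch$ defined by $H(P) := \cat{C}(Y,P)$. Because the values of $H$ sit in degree zero, the non-negativity hypothesis is satisfied, and the Yoneda Lemma supplies, for every $Q \in \Proj \cat{C}$, a canonical equivalence
\begin{align}
\lint^{P \in \Proj \cat{C}} \cat{C}(Y,P) \otimes \cat{C}(P,Q) \xrightarrow{\ \simeq \ } \cat{C}(Y,Q)
\end{align}
which is natural in $Q$.

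Next, since $\cof X$ is a bounded-below chain complex of objects in $\Proj \cat{C}$, I would apply this equivalence in each degree to $(\cof X)_n$ and, using naturality in $Q$, assemble the resulting maps into an equivalence of total complexes
\begin{align}
\lint^{P \in \Proj \cat{C}} \cat{C}(Y,P) \otimes \cat{C}(P, \cof X) \ \simeq \ \cat{C}(Y, \cof X) \ .
\end{align}
Here the left-hand side is understood as the derived coend of the bifunctor $(P,P') \mapsto \cat{C}(Y,P') \otimes \cat{C}(P,\cof X)$, where $\cat{C}(P,\cof X)$ is the chain complex obtained by applying $\cat{C}(P,-)$ level-wise.

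Finally, for every $P \in \Proj \cat{C}$ the functor $\cat{C}(P,-)$ is exact, so the quasi-isomorphism $\cof X \xrightarrow{\simeq} X$ induces a quasi-isomorphism $\cat{C}(P,\cof X) \xrightarrow{\simeq} \cat{C}(P,X)$; tensoring with the vector space $\cat{C}(Y,P)$ preserves this quasi-isomorphism, so homotopy invariance (Proposition~\ref{homotopyinvariancepropo}) yields
\begin{align}
\lint^{P \in \Proj \cat{C}} \cat{C}(Y,P) \otimes \cat{C}(P, \cof X) \ \simeq \ \lint^{P \in \Proj \cat{C}} \cat{C}(Y,P) \otimes \cat{C}(P, X) \ ,
\end{align}
and concatenating the two equivalences gives the desired natural equivalence. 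The main obstacle is the middle step: one has to justify that the level-wise Yoneda equivalences assemble into an equivalence on totalizations. The cleanest route I see is to realize the derived coend as a homotopy colimit via Lemma~\ref{lemmahocolim} and to invoke that homotopy colimits commute with the totalization of a bounded-below chain complex, which can be unpacked as a standard bicomplex / spectral sequence argument applied to the bar construction.
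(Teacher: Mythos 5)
Your proposal is correct and follows essentially the same route as the paper: the paper also first replaces $X$ by $\cof X$ inside the coend (using that $\cat{C}(P,-)$ preserves the quasi-isomorphism $\cof X \to X$ for projective $P$, together with homotopy invariance), and then proves that the augmentation $\lint^{P}\cat{C}(Y,P)\otimes\cat{C}(P,\cof X)\to\cat{C}(Y,\cof X)$ is an equivalence by viewing the left-hand side as a first-quadrant double complex, applying the Yoneda Lemma to each row $\lint^{P}\cat{C}(P,\cof_n X)\otimes\cat{C}(Y,P)\simeq\cat{C}(Y,\cof_n X)$, and running the spectral sequence of the row filtration. The ``main obstacle'' you flag at the end is resolved by exactly the bicomplex/spectral-sequence argument you suggest, so there is no gap.
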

 
 \begin{proof}
 	Since $\cat{C}(P,X) \simeq \cat{C}(P,\cof X)$ for any $P\in \Proj \cat{C}$, it suffices to prove that 
 	the natural map
 	\begin{align}	\lint^{P\in\Proj \cat{C }}   \cat{C}(Y,P) \otimes \cat{C}(P,\cof X) \to \cat{C}(Y,\cof X)   \label{eqnyonedaexactenqn} \end{align}
 	is an equivalence.
 	For this we realize that the left hand side is the total complex of the first quadrant double complex given in degree $(m,n)$ by
 	\begin{align}
 	\bigoplus_{P_0,\dots,P_m \in \Proj \cat{C}}          \cat{C}(P_1,P_0) \otimes \dots \otimes \cat{C}(P_{m},P_{m-1}) \otimes \cat{C}(  P_0,\cof_n X   ) \otimes \cat{C}(Y,P_m) \ .     \label{eqndoublecx}
 	\end{align}
 	Hence, the $n$-th row is $\lint^{P \in \Proj \cat{C}} \cat{C}(P,\cof_n X) \otimes \cat{C}(Y,P)$; and since $\cof_n X$ is projective, this is 
 	equivalent to $\cat{C}(Y,\cof_n X)$ by the Yoneda Lemma~\ref{propyoneda} applied to $\Proj \cat{C}$. Now the spectral sequence associated to the filtration of the double complex \eqref{eqndoublecx} by rows shows that \eqref{eqnyonedaexactenqn} is an equivalence. 
 \end{proof}
 
 \begin{proof}[\textsl{Proof of Proposition~\ref{propalternativcomplex}}]
 We have established in Theorem~\ref{theoderivedcoendviaobject} that the Hochschild complex of $\cat{C}$ is canonically equivalent to $\cat{C}(I,\cof \mathbb{F})$, where $\cof \mathbb{F}$ is a projective resolution of $\mathbb{F}$. Now the assertion follows from Lemma~\ref{lemyeondaprojective}.
 \end{proof}

The idea for the proof of Theorem~\ref{thmsl2z} is to apply Proposition~\ref{propohtpactiondescend} to the $B_3$-action on the complex $\lint^{P\in\Proj \cat{C }}   \cat{C}(I,P) \otimes \cat{C}(P,\mathbb{F})$ induced by the $B_3$-action on $\mathbb{F}$. The following statement asserts that the assumptions of Proposition~\ref{propohtpactiondescend} are met:

\begin{proposition}\label{propcommutinghomotopy}
	Let $\cat{C}$ be a finite ribbon category and $X \in \cat{C}$.
	Consider the action of  the ribbon twist $\theta_X$ of $X$  via postcomposition on the complex $\lint^{P\in\Proj \cat{C }}   \cat{C}(I,P) \otimes \cat{C}(P,X)$. The corresponding chain map ${\theta_X}_*$ is homotopic to the identity via a canonical homotopy $h$.
	If $f: X \to X$ is any endomorphism of $X$, then $f_* h=hf_*$ for the chain map $f_*$ that $f$ gives rise to.
\end{proposition}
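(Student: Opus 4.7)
The plan is to realize $h$ as an explicit ``weighted extra degeneracy'' in the bar complex model for the derived coend. Concretely, the simplicial bar construction for $\lint^{P \in \Proj\cat{C}} \cat{C}(I,P) \otimes \cat{C}(P,X)$ has $n$-simplices given by a direct sum over tuples $(P_0,\dots,P_n)$ of projectives of
\begin{align}
\cat{C}(P_1,P_0) \otimes \dots \otimes \cat{C}(P_n,P_{n-1}) \otimes \cat{C}(I,P_n) \otimes \cat{C}(P_0,X),
\end{align}
whose typical element I denote $(g_1,\dots,g_n,\alpha,\beta)$ with $g_j : P_j \to P_{j-1}$, $\alpha : I \to P_n$, $\beta : P_0 \to X$. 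Under this description the chain map ${\theta_X}_*$ is $(g_1,\dots,g_n,\alpha,\beta) \mapsto (g_1,\dots,g_n,\alpha,\theta_X\beta)$, touching only the rightmost factor.

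For each $0 \le i \le n$, I would define $h_i : B_n \to B_{n+1}$ as the insertion of the ribbon twist at slot $i$: the $(n+1)$-simplex $h_i(g_1,\dots,g_n,\alpha,\beta)$ is indexed by the tuple $(P_0,\dots,P_i,P_i,P_{i+1},\dots,P_n)$ with morphism sequence $(g_1,\dots,g_i,\theta_{P_i},g_{i+1},\dots,g_n,\alpha,\beta)$. The candidate homotopy is then $h := \sum_{i=0}^n (-1)^i h_i$. Canonicity is automatic since the formula invokes only the universal natural transformation $\theta$ on $\cat{C}$ together with the bar data of the simplex.

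The verification that $\partial h + h\partial = {\theta_X}_* - \id$ would split into three pieces. First, the two ``endpoint'' faces compute $\partial_0 h_0(\sigma) = (g_1,\dots,g_n,\alpha,\beta\theta_{P_0}) = {\theta_X}_*(\sigma)$ by naturality of $\theta$, and $\partial_{n+1} h_n(\sigma) = (g_1,\dots,g_n,\theta_{P_n}\alpha,\beta) = \sigma$ by naturality together with $\theta_I = \id_I$. Second, for each $0 \le i \le n$ the two ``middle'' faces $\partial_i h_i$ and $\partial_{i+1} h_i$ both absorb the inserted $\theta_{P_i}$ into an adjacent bar factor via $g_i\theta_{P_i} = \theta_{P_{i-1}} g_i$, and the alternating sum of these contributions telescopes to exactly ${\theta_X}_*(\sigma) - \sigma$. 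Third, the remaining faces obey the standard degeneracy-style identities $\partial_j h_i = h_{i-1}\partial_j$ for $j<i$ and $\partial_j h_i = h_i \partial_{j-1}$ for $j>i+1$, which cancel term by term against the matching summands of $h\partial$.

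The commutation $f_* h = h f_*$ is then immediate from the formula: $f_*$ acts only on the rightmost factor as $\beta \mapsto f\beta$, whereas each $h_i$ touches only the interior bar factors and leaves $\beta$ alone, so $f_* h_i = h_i f_*$ simplex by simplex and summing over $i$ yields the claim. The main obstacle I anticipate is the index-and-sign bookkeeping in the middle step of the boundary calculation; the remainder follows the familiar pattern for homotopies built from a natural transformation of identity functors applied to a bar-type resolution.
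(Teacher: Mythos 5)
Your proposal is correct and is essentially the paper's own proof: the paper defines the same maps $h_j$ inserting the ribbon twist $\theta_{P_j}$ at each object $P_j$ of the bar simplex, checks via naturality of $\theta$ and $\theta_I=\id_I$ that they form a simplicial homotopy between ${\theta_X}_*$ and the identity (your alternating sum is just the standard passage from a simplicial to a chain homotopy), and observes that $f_*$ commutes with each $h_j$ because it only acts on the factor $\cat{C}(P_0,X)$.
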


\begin{proof}
	We treat $\lint^{P\in\Proj \cat{C }}   \cat{C}(I,P) \otimes \cat{C}(P,X)$ as a simplicial vector space and construct $h$ as a simplicial homotopy.
	An $n$-simplex of $\lint^{P\in\Proj \cat{C }}   \cat{C}(I,P) \otimes \cat{C}(P,X)$ is a string
	\begin{align}
	\underline{f} = \left(   I \xrightarrow{\  f_{n}  \ } P_n \xrightarrow{\  f_{n-1}  \ } P_{n-1} \xrightarrow{\  f_{n-2}  \ } \dots \xrightarrow{\  f_{0}  \ } P_0 \xrightarrow{\  f_{-1}  \ } X \right) \label{genericstringeqn}
	\end{align}
	of morphisms in $\cat{C}$, where the objects $P_j$ for $0\le j\le n$ are projective (it lives in the summand indexed by $P_0,\dots,P_n$). For $0\le j \le n$, we define $h_j \underline{f}$ as the $n+1$-simplex
	\begin{align}
	I \xrightarrow{\  f_{n}  \ } P_n \xrightarrow{\  f_{n-1}  \ } P_{n-1} \xrightarrow{\  f_{n-2}  \ } \dots  \xrightarrow{\  f_{j}  \ } P_j \xrightarrow{\  \theta_{P_j}  \ } P_j \xrightarrow{\  f_{j-1}  \ } \dots   \xrightarrow{\  f_{0}  \ } P_0 \xrightarrow{\  f_{-1}  \ } X \ , 
	\end{align}
	i.e.\ $h_j$ inserts the ribbon twist of $P_j$.
	A direct computation
	using the naturality of the twist and $\theta_I=\id_I$
	 shows that
the maps \begin{align} h_j : \left( \lint^{P\in\Proj \cat{C }}   \cat{C}(I,P) \otimes \cat{C}(P,X)  \right)_n \to    \left( \lint^{P\in\Proj \cat{C }}   \cat{C}(I,P) \otimes \cat{C}(P,X)  \right)_{n+1}\end{align} form a simplicial homotopy   \cite[Definition~8.3.11]{weibel}
between ${\theta_X}_*$ and the identity of $\lint^{P\in\Proj \cat{C }}   \cat{C}(I,P) \otimes \cat{C}(P,X)$. 
For any endomorphism $f: X \to X$ we see $h_jf_*=f_*h_j$. The same is true for the chain homotopy that the $h_j$ give rise to.
\end{proof}

We can now tie all the technical results together:

\begin{proof}[\slshape Proof of Theorem~\ref{thmsl2z}]
	Recall that we can describe the ordinary  projective action of $B_3$ on $\mathbb{F}$ as a $\pi^* \xi$-projective action for a cocycle $\xi \in Z^2(\SL(2,\mathbb{Z});k^\times)$, where $\pi : B_3 \to \SL(2,\mathbb{Z})$ is the canonical projection (this was a consequence of Lemma~\ref{lemmapulbackcocycle}).
	By Proposition~\ref{propalternativcomplex} we know that the Hochschild complex $\lint^{X\in\Proj \cat{C}} \cat{C}(X,X)$ of $\cat{C}$ is equivalent to the homotopy coend $\lint^{P\in\Proj \cat{C }}   \cat{C}(I,P) \otimes \cat{C}(P,\mathbb{F})$. Hence, we may as well exhibit a homotopy coherent projective $\SL(2,\mathbb{Z})$-action on $\lint^{P\in\Proj \cat{C }}   \cat{C}(I,P) \otimes \cat{C}(P,\mathbb{F})$.

 To this end, we note that by postcomposition we obtain a $\pi^* \xi$-projective action of $B_3$ on   $\lint^{P\in\Proj \cat{C }}   \cat{C}(I,P) \otimes \cat{C}(P,\mathbb{F})$. The strategy is now to apply Proposition~\ref{propohtpactiondescend} to this projective $B_3$-action and the central extension
	 $0\to \mathbb{Z}\to B_3 \stackrel{\pi}{\to} \SL(2,\mathbb{Z})\to 0$ to conclude that the projective $B_3$-action descends to a homotopy coherent projective $\SL(2,\mathbb{Z})$-action. Then from the proof of Proposition~\ref{propohtpactiondescend} we can read off that this will be constructed as an action of the bar resolution of the $\pi^*\xi$-twisted group algebra of $\SL(2,\mathbb{Z})$, which justifies the additional statement on the projectivity of the action that we have included in Remark~\ref{remprojthm}.
	 
	 In order to apply Proposition~\ref{propohtpactiondescend},
	  it remains to prove 
	 the generator $r$ of $B_3$ acts by a map which is homotopic to the identity by a chain homotopy that commutes with all chain maps that constitute the projective $B_3$-action on $\lint^{P\in\Proj \cat{C }}   \cat{C}(I,P) \otimes \cat{C}(P,\mathbb{F})$.
	But $r$ acts on $\lint^{P\in\Proj \cat{C }}   \cat{C}(I,P) \otimes \cat{C}(P,\mathbb{F})$ by postcomposition with the inverse ribbon twist of $\mathbb{F}$, as follows from the properties of the Lyubashenko-Majid action recalled on page~\pageref{pagerefribbontwistaction}. Hence, the desired statement can be deduced from Proposition~\ref{propcommutinghomotopy}.
	\end{proof}

\begin{remark}\label{remarkfactorization}
The fact that in the proof of Theorem~\ref{thmsl2z} we use the complex $\lint^{P\in\Proj \cat{C }}   \cat{C}(I,P) \otimes \cat{C}(P,\mathbb{F})$, exhibit a projective $B_3$-action on it and prove that it descends up to coherent homotopy to $\SL(2,\mathbb{Z})$ is not by accident:
In the language of conformal field theory, $\cat{C}(P,\mathbb{F})$ is the conformal block for torus with one boundary disk labeled by $P$. If we think of the Hochschild complex $\lint^{X \in \Proj \cat{C}} \cat{C}(X,X)$ as the differential graded conformal block for the torus, then one can formulate a differential graded factorization property for the gluing of a disk to the torus with one boundary circle (it is crucial that the gluing is implemented via a homotopy coend over projective objects). In this process, the $B_3$-action descends up to coherent homotopy to a $\SL(2,\mathbb{Z})$-action, i.e.\ from the mapping class group of the torus with one boundary circle to the mapping class group of the closed torus -- just as one would expect. 
We will not make precise the idea of differential graded conformal blocks here, but still the heuristics just presented make the strategy in the proof of Theorem~\ref{thmsl2z} more transparent. 
A version of this reasoning without homotopy coherence, i.e.\ on the level of homology, is used in \cite{svea2}.
\end{remark}

\begin{remark}[Modular homology]\label{remmodularhomology}
	It is natural to ask about the homotopy orbits of the homotopy coherent projective mapping class group action on the Hochschild complex of a modular category that we obtain from Theorem~\ref{thmsl2z}. This leads to a complex whose homology one might call the \emph{modular homology} of a modular category. Unlike Hochschild homology, this new algebraic invariant should be sensitive to more structure of the modular category than just the underlying linear category, e.g.\ the braiding and the ribbon twist. 
For Drinfeld doubles, the modular homology relates to the mapping class group orbits of bundles.	A detailed investigation  is beyond the scope of this paper and will be the subject of future work.
	\end{remark}

%%%%%%%%%%%%%%%%%%%%%%%%


\begin{thebibliography}{1000000000}\itemsep0pt
	\bibitem[BK00]{bakifm}
B. Bakalov, A. Kirillov.
On the Lego-Teichmüller game.
\emph{Transformation Groups} 5:207--244, 2000.

\bibitem[BK01]{baki}
B. Bakalov, A. Kirillov.
 \emph{Lectures on tensor categories and modular functors.}
 University Lecture Series 21,
 Am. Math. Soc.,  Providence, 2001.


\bibitem[BDSPV15]{BDSPV15}
B. Bartlett, C. L. Douglas, C. J. Schommer-Pries, J. Vicary.
{Modular Categories as Representations of the 3-dimensional Bordism 2-Category}. 
 arXiv:1509.06811 [math.AT]












	\bibitem[BM06]{bergermoerdijkbv}	
C. Berger, I. Moerdijk. {The Boardman-Vogt resolution of operads in monoidal model categories.} \emph{Topology} 45:807--849, 2006.














\bibitem[CR05]{cibils}
C. Cibils,  M. J. Redondo. { Cartan-Leray spectral sequence
for Galois coverings of linear categories.} \emph{J. Algebra} 284:310--325, 2005.


		\bibitem[DW90]{dijkgraafwitten}
R. Dijkgraaf, E. Witten.
{Topological Gauge Theories and Group Cohomology}.
\emph{Commun. Math. Phys.} 129:393-429, 1990.



\bibitem[DSPS19]{dss}
C. L. Douglas, C. Schommer-Pries, N. Snyder. The balanced tensor product of module categories.
\emph{Kyoto J.  Math.} 59(1):167--179, 2019.



\bibitem[EGNO15]{egno}
P. Etingof, S. Gelaki, D. Nikshych, V. Ostrik.
\emph{Tensor categories.}
Math. Surveys  Monogr.  205,
Am. Math. Soc.,  Providence, 2015.


\bibitem[EO04]{etinghofostrik}
P. Etingof, V. Ostrik. {Finite tensor categories.} 
\emph{Mosc. Math. J.}
4(3):627--654, 2004.

	\bibitem[FQ93]{freedquinn}
D. S. Freed, F. Quinn.
{Chern-Simons Theory with Finite Gauge Group}.
\emph{Commun. Math. Phys.} 156:435--472, 1993.

\bibitem[Fre17]{Fresse1}
B.~Fresse.
{\slshape Homotopy of operads and Grothendieck-Teichm\"uller groups. 
	Part 1: The Algebraic Theory and its Topological Background.}
Math. Surveys  Monogr.  217,
Am. Math. Soc.,  Providence, 2017.




 	\bibitem[FSV12]{fsv}
 	J. Fuchs, C. Schweigert, A. Valentino.
 {Bicategories for boundary conditions and for surface defects in 3-d TFT.}
  	 \emph{Commun. Math. Phys.} 321:543--575, 2013.

\bibitem[FS17]{fscoend}
J. Fuchs, C. Schweigert.
{Coends in conformal field theory.}
\emph{Contemp. Math.}
695:65--81, 2017.


 	








	\bibitem[Gal17]{galindo}
C. Galindo.
{Coherence for monoidal $G$-categories and braided $G$-crossed categories}.
\emph{J. Algebra} 487:118–137, 2017.




	\bibitem[GKP18]{mtrace}
N. Geer, J. Kujawa, B. Patureau-Mirand. {
M-traces in (non-unimodular) pivotal categories.} 2018.
arXiv:1809.00499 [math.RT]

		\bibitem[GNN09]{centerofgradedfusioncategories}
S. Gelaki, D. Naidu, D. Nikshych.
{Centers of Graded Fusion Categories}.
\emph{Algebra  Number Theory}, 3(8):959-990, 2009.


	\bibitem[GJ09]{goerssjardine} 
P. G. Goerss, R. F. Jardine. \emph{Simplicial Homotopy Theory.} 
Birkhäuser, 2009.


\bibitem[Hir03]{Hirschhorn}
P.~S.~Hirschhorn.
{\slshape Model categories and their localizations}.
Math.\ Surveys Monogr.\ {99}, 
Amer.\ Math.\ Soc., Providence, 2003.


\bibitem[Hoe09]{hoefel}
E. Hoefel. {OCHA and the swiss-cheese operad.} \emph{J. Homotopy Relat. Struct.} 4(1):123–151, 2009.

\bibitem[Hor17]{horel}
G. Horel.
Factorization homology and calculus à la Kontsevich Soibelman. \emph{J. Noncommutative  Geometry}    11(2):703--740, 2017.  

	 \bibitem[Hua08]{huang} Y.-Z. Huang. Rigidity and modularity of vertex tensor categories. \emph{Commun. Contemp. Math.}
10(1):871--911, 2008.

\bibitem[Idr17]{idrissi}
N. Idrissi.
{Swiss-Cheese operad and Drinfeld center.}
\emph{Isr. J. Math.}  221(2):941--972, 2017. 




\bibitem[Iv86]{iversen}
B. Iversen.
\emph{Cohomology of sheaves.}
Springer-Verlag Berlin Heidelberg, 1986.




\bibitem[KS06]{ks}
H. Kajiura, J. Stasheff. {Homotopy algebras inspired by classical open-closed string field theory.} \emph{Comm. Math. Phys.} 263(3):553--581, 2006.




\bibitem[Kas95]{kassel}
C. Kassel.
\emph{Quantum Groups}.
Springer-Verlag New York, 1995.

\bibitem[Kel99]{keller}
B. Keller.
{On the cyclic homology of exact categories}.
\emph{J. Pure Appl. Algebra} 136:1--56, 1999.




\bibitem[KL01]{kl}
T. Kerler, V. V. Lyubashenko. \emph{Non-Semisimple
	Topological Quantum Field
	Theories for 3-Manifolds
	with Corners}.
	Lecture Notes in Mathematics 1765, Springer-Verlag Berlin Heidelberg, 2001.

\bibitem[Kir04]{kirrilovg04}
A. A. Kirillov.
{On $G$-equivariant modular categories}. arXiv:math/0401119v1 [math.QA]


\bibitem[LMSS18]{svea}
S. Lentner, S. N. Mierach, C. Schweigert, Y. Sommerhäuser.
{Hochschild Cohomology and the Modular Group.}
\emph{J. Algebra}
 507:400--420, 2018.

\bibitem[LMSS20]{svea2}
S. Lentner, S. N. Mierach, C. Schweigert, Y. Sommerhäuser.
{Hochschild Cohomology, Modular Tensor Categories, and Mapping Class Groups.} arXiv:2003.06527 [math.QA]


\bibitem[LM94]{lubamajid}
V. V. Lyubashenko, S. Majid.
{Braided groups and quantum Fourier transform.} \emph{J. Algebra} 166(3):506--528, 1994.

\bibitem[Lyu95a]{lubacmp}
V. V. Lyubashenko. {Invariants of 3-manifolds and projective representations of mapping class groups  via  quantum  groups  at  roots  of  unity.} \emph{Commun. Math. Phys.} 172:467--516, 1995.


\bibitem[Lyu95b]{luba}
 V. V. Lyubashenko. {Modular transformations for tensor categories.}
\emph{J. Pure Appl. Algebra} 98(3):279--327, 1995.

\bibitem[Lyu96]{lubalex}
V. V. Lyubashenko. {Ribbon abelian categories as modular categories.}
 \emph{J. Knot Theory and its
Ramif.} 5:311--403, 1996.


\bibitem[Mac71]{maclane}
S. Mac Lane.
\emph{Categories for the Working Mathematician}. 	Graduate Texts in Mathematics 5, Springer, 1971.

\bibitem[MNS12]{maiernikolausschweigerteq}
J. Maier, T. Nikolaus, C. Schweigert.
{Equivariant Modular Categories via Dijkgraaf-Witten theory}. \emph{Adv. Theor. Math. Phys.} 16:289--358, 2012.


\bibitem[MCar94]{mcarthy}
R. McCarthy. {The cyclic homology of an exact category.} \emph{J. Pure Appl. Algebra} 93:251--296, 1994.


	\bibitem[Müg04]{mueger}
	M. Müger.
	{Galois extensions of braided tensor categories and braided crossed $G$-categories}. \emph{J. Algebra} 277:256--281, 2004.

	\bibitem[MW20a]{MuellerWoikeHH}
L. Müller, L. Woike.
{Equivariant Higher Hochschild Homology and Topological Field Theories}.
\emph{Homology Homotopy Appl.} 22(1):27--54, 2020.

\bibitem[MW20b]{littlebundles}
L. Müller, L. Woike.
{The Little Bundles Operad}.
\emph{Algebr. Geom. Topol.}
 20:2029--2070, 2020.





\bibitem[RT90]{rt1} N. Reshetikhin, V. Turaev. {Ribbon graphs and their invariants derived from
quantum groups.} \emph{Comm. Math. Phys.} 127:1--26, 1990.

\bibitem[RT91]{rt2}
N. Reshetikhin, V. Turaev. {Invariants of 3-manifolds via link polynomials and quantum groups.} \emph{Invent.
Math.} 103:547--598, 1991.

	\bibitem[Rie14]{riehl}
E. Riehl. \emph{Categorical Homotopy Theory.}
New Mathematical Monographs 24, Cambridge University Press, 2014.

	\bibitem[Rie18]{riehl18}
E. Riehl. {Homotopy coherent structures.} Lecture Notes, 2018.
arXiv:1801.07404 [math.CT]







	\bibitem[SW03]{salvatorewahl}
P. Salvatore, N. Wahl.
{Framed discs operads and Batalin-Vilkovisky algebras}. 
\emph{Quart. J. Math.} 54:213--231, 2003. 



	\bibitem[SW20]{extofk}
C. Schweigert, L. Woike.
{Extended Homotopy Quantum Field Theories and their Orbifoldization}. 
\emph{J. Pure Appl. Algebra} 
224(4), 2020.







  	







	\bibitem[Shi20]{shimizu}
K. Shimizu.
{Further results on the structure of (co)ends in finite tensor categories}. 
 	\emph{App. Cat. Structures}
 	28:237--286, 2020.
 	

\bibitem[Shi19]{shimizumodular}
K. Shimizu.
{Non-degeneracy conditions for braided finite tensor categories}. \emph{Adv. Math.} 355, 2019.

	\bibitem[Shu09]{shulman}
M. Shulman. {Homotopy Limits and Colimits and Enriched Homotopy Theory.} arXiv:math/0610194 [math.AT]






\bibitem[SZ12]{sz}
Y. Sommerh\" auser, Y. Zhu.
\emph{Hopf Algebras and Congruence Subgroups.}
Mem. Am. Math. Soc. 219(1028), Am. Math. Soc., Providence, 2012.









	\bibitem[Tur00]{turaevgcrossed}	
		V. Turaev.
		\emph{Homotopy field theory in dimension 3 and crossed group-categories.}
	arXiv:math/0005291 [math.GT]

	\bibitem[Tur10]{turaev}
V. G. Turaev. \emph{Quantum Invariants of Knots and 3-Manifolds.}
Studies in Mathematics 18, De Gruyter, 2010.

		\bibitem[Tur10-$G$]{turaevhqft}
V. Turaev.
\emph{Homotopy Quantum Field Theory}. With appendices by M. Müger and A. Virelizier. Eur. Math. Soc., 2010.

		\bibitem[TV12]{htv}
V. Turaev, A. Virelizier.
{On 3-dimensional homotopy quantum field theory, I}. 	\emph{Int. J. Math.} 23(9):1--28, 2012. 

\bibitem[TV14]{hrt}
V. Turaev, A. Virelizier.
{On 3-dimensional homotopy quantum field theory II: The surgery approach}. 
\emph{Int. J. Math.} 25(4):1--66, 2014. 


\bibitem[Vor99]{voronov}
A. A. Voronov.  \emph{The Swiss-cheese operad}.  In: Homotopy invariant algebraic structures, 239 \emph{Contemp. Math.} 365--373. Amer. Math. Soc., 1999.




	\bibitem[Wei94]{weibel}
C. A. Weibel. \emph{An introduction to homological algebra.}
Cambridge studies in advanced mathematics 38, Cambridge University Press, 1994.



\end{thebibliography}
\end{document}